\renewcommand{\myR}{{\bf R}}
\renewcommand{\O}{\sO}
\newcommand{\ie}{{\itshape i.e.} }
\newcommand{\roundup}[1]{\lceil #1 \rceil}
\newcommand{\rounddown}[1]{\lfloor #1 \rfloor}
\renewcommand{\to}[1][]{\xrightarrow{\ #1\ }}
\newcommand{\ot}[1][]{\xleftarrow{\ #1\ }}
\newcommand{\onto}[1][]{\protect{\xrightarrow{\ #1\ }\hspace{-0.8em}\rightarrow}}
\newcommand{\into}[1][]{\lhook \joinrel \xrightarrow{\ #1\ }}
\newcommand{\tauCohomology}{T}
\newcommand{\tr}{\operatorname{Tr}}
\newcommand{\dcx}{\omega^\mydot}
\newcommand{\dsh}{\omega}
\newcommand{\h}[1][]{\myH^{#1}}
\renewcommand{\sHom}{\sH\negthinspace om}
\newcommand{\Sch}{\operatorname{Sch}}
\theoremstyle{theorem}
\newtheorem*{Mainthm}{Main Theorem}
\newtheorem*{cor*}{Corollary}
\numberwithin{equation}{section}
\begin{document}

\title{$F$-singularities via alterations}
\author{Manuel Blickle, Karl Schwede, Kevin Tucker}

\address{Institut f\"ur Mathematik\\ Johannes Gutenberg-Universit\"at Mainz\\55099 Mainz, Germany}
\email{blicklem@uni-mainz.de}
\address{Department of Mathematics\\ The Pennsylvania State University\\ University Park, PA, 16802, USA}
\email{schwede@math.psu.edu}
\address{Department of Mathematics\\ University of Utah\\ Salt Lake City, UT, 84112, USA}
\email{kevtuck@math.utah.edu}

\thanks{The first author was partially supported by a DFG Heisenberg Fellowship and the DFG research grant SFB/TRR45}
\thanks{The second author was partially supported by an NSF Postdoctoral Fellowship \#0703505, the NSF grant \#1064485, the NSF FRG grant \#1265261, the NSF CAREER grant \#1252860 and a Sloan Fellowship}
\thanks{The third author was partially supported by an NSF Postdoctoral fellowship \#1004344 and NSF grant \#1303077.}

\subjclass[2010]{14F18, 13A35, 14F17, 14B05, 14E15}
\keywords{Test ideal, multiplier ideal, alteration, rational singularities, $F$-rational singularities, Nadel vanishing}
\maketitle

\begin{abstract}
We give characterizations of test ideals and $F$-rational singularities via (regular) alterations.  Formally, the descriptions are analogous to standard characterizations of multiplier ideals and rational singularities in characteristic zero via log resolutions.  Lastly, we establish Nadel-type vanishing theorems (up to finite maps) for test ideals, and further demonstrate how these vanishing theorems may be used to extend sections.
\end{abstract}

\section{Introduction}

In this paper we define an ideal, in arbitrary equal characteristic, which coincides with the multiplier ideal over $\bC$, and coincides with the test ideal in characteristic $p > 0$.  This justifies the maxim: \begin{center}\emph{The test ideal and multiplier ideal are morally equivalent.}\end{center}
We state our main theorem.

\begin{Mainthm}[Theorem \ref{thm.MainThm}, Corollary \ref{cor.TestIdealConstructedForManyAlterations}, Theorem \ref{cor.MultIdealViaAlterations}]
\label{main.thm}
Suppose that $X$ is a normal algebraic variety over a perfect field and $\Delta$ is a $\bQ$-divisor on $X$ such that $K_X + \Delta$ is $\bQ$-Cartier.  
Consider the ideal
\[
J := \bigcap_{\pi \: Y \to X} \Image \Big( \pi_* \O_Y( \lceil K_Y - \pi^*(K_X + \Delta) \rceil ) \to[\tr_{\pi}] K(X)\Big) \, \, .
\]
Here the intersection runs over all generically finite proper separable maps $\pi \: Y \to X$ where $Y$ is regular (or equivalently just normal), and the map to the function field $K(X)$ is induced by the Grothendieck trace map $\tr_{\pi} \: \pi_* \omega_Y \to \omega_X$ (if $K_{Y} = \pi^{*}K_{X} + \Ram_{\pi}$ over the locus where $\pi$ is finite, then $\tr_{\pi} \: \pi_{*}\O_{Y}(K_{Y}) \to \O_{X}(K_{X})$ is induced by the field trace $\tr_{K(Y)/K(X)} \: K(Y) \to K(X)$). We obtain the following:
\begin{enumerate}
\item  If $X$ is of equal characteristic zero, then $J = \mJ(X; \Delta)$, the multiplier ideal of $(X, \Delta)$.
\item  If $X$ is of equal characteristic $p > 0$, then $J = \tau(X; \Delta)$, the test ideal of $(X, \Delta)$.
\end{enumerate}
Furthermore, in either case, the intersection defining $J$ stabilizes: in other words, there is always a generically finite separable proper map $\pi : Y \to  X$ with $Y$ regular such that $J = \Image \big( \pi_* \O_Y(K_Y - \pi^*(K_X + \Delta) ) \to[\tr_{\pi}] K(X)\big)$.
\end{Mainthm}

In fact, we prove a number of variants on the above theorem in further generality, \textit{i.e.} for various schemes other than varieties over a perfect field.

Of course, there are two different statements here.  In characteristic zero, this statement can be viewed as a generalization of the transformation rule for multiplier ideals under generically finite proper dominant maps, see \cite[Theorem 9.5.42]{LazarsfeldPositivity2} or \cite[Proposition~2.8]{EinMultiplierIdealsVanishing}.
In positive characteristic, a basic case of the theorem is the following characterization of $F$-rational singularities -- which is interesting in its own right.  Recall that an \emph{alteration} is a proper and generically finite map $\pi : Y \to X$, it is called a \emph{regular alteration} if $Y$ is a regular scheme \cite{de_jong_smoothness_1996}.
\begin{cor*}[\autoref{cor.CharOfFRational}, \textit{cf.} \cite{HunekeLyubeznikAbsoluteIntegralClosure, HochsterYaoUnpublished}]
 An $F$-finite ring $R$ of characteristic $p > 0$ is $F$-rational if and only if it is Cohen-Macaulay and for every alteration (equivalently every regular separable alteration if $R$ is of finite type over a perfect field, equivalently every finite dominant map with $Y$ normal) $f : Y \to \Spec R$, the map $f_* \omega_Y \to \omega_R$ is surjective.
\end{cor*}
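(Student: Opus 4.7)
My plan is to pivot everything on the standard identification that an $F$-finite Cohen--Macaulay ring $R$ is $F$-rational if and only if the parameter test submodule satisfies $\tau(\omega_R) = \omega_R$. First I would appeal to the $\omega_R$-analog of the Main Theorem (the version characterizing the parameter test submodule, which requires no $\bQ$-Cartier hypothesis):
\[
\tau(\omega_R) \;=\; \bigcap_{\pi \colon Y \to X} \Image\bigl(\pi_* \omega_Y \to[\tr_\pi] \omega_R\bigr),
\]
the intersection running over regular separable alterations $\pi$ of $X = \Spec R$. Since each image is contained in $\omega_R$, the intersection equals $\omega_R$ if and only if every individual image does. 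This instantly pairs $F$-rationality with surjectivity of $\tr_\pi$ for every regular separable alteration, proving the middle characterization.

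To extend from regular separable alterations to arbitrary alterations, given any alteration $f \colon Y \to X$, use de Jong's theorem to produce a regular separable alteration $g \colon Z \to Y$. Then $f \circ g$ is itself a regular separable alteration, and functoriality of the Grothendieck trace gives $\tr_{f \circ g} = \tr_f \circ f_*(\tr_g)$, so that $\Image(\tr_{f \circ g}) \subseteq \Image(\tr_f)$. Hence surjectivity of every regular separable trace forces surjectivity of every alteration trace; the converse is trivial since regular separable alterations are alterations.

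For the finite-dominant-with-normal-$Y$ characterization: the same de Jong trick handles the forward direction (resolve a given finite dominant $f$ by a regular alteration $g$; the composite is a regular separable alteration whose surjective trace factors through $\tr_f$). For the reverse direction I would invoke the classical fact (see \cite{HunekeLyubeznikAbsoluteIntegralClosure, HochsterYaoUnpublished}) that a Cohen--Macaulay $F$-finite $R$ is $F$-rational if and only if $\omega_S \to \omega_R$ is surjective for every module-finite extension $R \subseteq S$. Given such an $S$, its normalization $S^n$ in its total ring of fractions is finite over $R$ with $S^n$ normal, so the hypothesis yields $\omega_{S^n} \to \omega_R$ surjective; since this factors as $\omega_{S^n} \to \omega_S \to \omega_R$, the second map is also surjective.

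The main subtlety lies in establishing the parameter-test-submodule analog of the Main Theorem without a $\bQ$-Cartier hypothesis; once that characterization is in hand, the remaining steps are formal consequences of Grothendieck trace functoriality and the classical characterizations of $F$-rationality.
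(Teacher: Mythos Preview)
Your approach is the same as the paper's: both derive the corollary from the characterization $\tau(\omega_R) = \bigcap_\pi \Image(\tr_\pi)$ (the paper's Theorem~\ref{thm.CharOfParTestSubmodule} and Corollary~\ref{thm.CharOfParTestSubmoduleCorollary}), together with the identification of $F$-rationality with $\tau(\omega_R) = \omega_R$ in the Cohen--Macaulay case.

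Two small points, however. First, your de Jong step contains an error: if $f \colon Y \to X$ is an inseparable alteration and $g \colon Z \to Y$ is a regular separable alteration, then $f \circ g$ is regular but \emph{not} separable (the extension $K(Z)/K(X)$ inherits the inseparability of $K(Y)/K(X)$). The fix is simply to use the full strength of the parameter test submodule theorem you already invoke: part~(a) of Theorem~\ref{thm.CharOfParTestSubmodule} gives $\tau(\omega_R) \subseteq \Image(\tr_f)$ for \emph{every} proper dominant $f$, separable or not, so no factoring through a separable map is needed. Second, your appeal to a ``classical fact'' from \cite{HunekeLyubeznikAbsoluteIntegralClosure, HochsterYaoUnpublished} for the finite-map direction is unnecessary and borderline circular: that equivalence is precisely what is being established here (the Hochster--Yao reference is an unpublished preprint obtaining the same result independently). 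Again the parameter test submodule theorem already handles this directly, since part~(b) produces a single \emph{finite} map whose trace image equals $\tau(\omega_R)$; surjectivity of trace for all finite maps with normal source then forces $\tau(\omega_R) = \omega_R$ immediately.
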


The proof of this special case is in fact the key step in the proof of the main theorem in positive characteristic. The central ingredients in its proof are the argument of K. Smith \cite{SmithFRatImpliesRat} that $F$-rational singularities are pseudo-rational, and the work of C. Huneke and G. Lyubeznik on annihilating local cohomology using finite covers \cite{HunekeLyubeznikAbsoluteIntegralClosure} (\textit{cf.} \cite{HochsterHunekeInfiniteIntegralExtensionsAndBigCM,HochsterYaoUnpublished,sannai_galois_2011}).  The proof of the Main Theorem additionally utilizes transformation rules for test ideals under finite morphisms \cite{SchwedeTuckerTestIdealFiniteMaps}.

In this paper, we also give a transformation rule for test ideals under proper dominant (and in particular proper birational) maps between varieties of the same dimension.  More precisely, for any normal (but not necessarily proper) variety $Y$ and $\bQ$-divisor $\Gamma$, we define a canonical submodule $\tauCohomology^{0} (Y, \Gamma) \subseteq H^0(Y, \O_Y(\lceil K_Y + \Gamma \rceil))$.  We use this submodule to obtain a transformation rule for test ideals under alterations.

\begin{theorem*}[Theorem \ref{thm.TransformationOfTestIdealsUnderDominantMaps}]
Suppose that $\pi \: Y \to X = \Spec R$ is a proper dominant generically finite map of normal varieties over a perfect (or even $F$-finite) field of characteristic $p > 0$.  Further suppose that $\Delta$ is a $\bQ$-divisor on $X$ such that $K_X + \Delta$ is $\bQ$-Cartier.

Consider the canonically determined submodule (see Definition~\ref{def.TauCohomology})
\[
\tauCohomology^{0} (Y, -\pi^*(K_X + \Delta)) \subseteq H^{0}(Y, \O_Y(\lceil K_Y - \pi^*(K_X + \Delta))\rceil))
\]
of sections which are in the image of the trace map for any alteration of $Y$.
Then the global sections of $\tau(X; \Delta)$ coincide with the image of $\tauCohomology^{0} (Y, -\pi^*(K_X + \Delta))$ under the map
\[
H^{0}(Y, \O_Y(\lceil K_Y - \pi^*(K_X + \Delta)\rceil)) \to[\tr_{\pi}] K(X)
\]
which is induced by the trace $\tr_{\pi} \: \pi_* \omega_Y\to \omega_X$.
\end{theorem*}
\noindent We also prove a related transformation rule for multiplier ideals under arbitrary proper dominant maps in Theorem \ref{thm.TransformationRuleForMultProperDominant}.

Perhaps the most sorely missed tools in positive characteristic birational algebraic geometry (in comparison to characteristic zero) are vanishing theorems for cohomology.  Indeed,
Kodaira vanishing fails in positive characteristic \cite{raynaud_contre-exemple_1978}.  However, if $X$ is projective in characteristic $p > 0$ and $\sL$ is a ``positive'' line-bundle, cohomology classes $z \in H^i(X, \sL^{-1})$ can often be killed by considering their images in $H^i(Y, f^* \sL^{-1})$ for finite covers $f \: Y \to X$.  For example, if $i \geq 0$ and $\sL$ big and semi-ample, it was shown in \cite{BhattDerivedDirectSummand, BhattThesis} that there exists such a cover killing any cohomology class $\eta \in H^i(X, \sL^{-1})$ for $i < \dim X$ (\cf \cite{HochsterHunekeInfiniteIntegralExtensionsAndBigCM, SmithVanishingSingularitiesAndEffectiveBounds,SmithErratumVanishingSingularitiesAndEffectiveBounds}).  When we combine our main result with results from \cite{BhattThesis}, we obtain the following variant of a Nadel-type vanishing theorem in characteristic $p > 0$ (and a relative version).  Notably, we need not require a W2 lifting hypothesis.
\begin{theorem*}[\autoref{thm.NadelInCharP}]
Let $X$ be a normal proper algebraic variety of finite type over a perfect (or even $F$-finite) field of characteristic $p > 0$, $L$ a Cartier divisor, and $\Delta \geq 0$ a $\bQ$-divisor such that $K_{X} + \Delta$ is $\bQ$-Cartier.  Suppose that $L - (K_X + \Delta)$ is a big and semi-ample $\bQ$-divisor.  Then there exists a finite surjective map $f \: Y \to X$ such that:
\begin{enumerate}
\item  The natural map $f_* \O_Y(\lceil K_Y + f^*(L - K_X - \Delta) \rceil) \to \O_X( L )$, induced by the trace map, has image $\tau(X; \Delta) \tensor \O_X(L)$.
\item  The induced map on cohomology
\[
H^i(Y, \O_Y(\lceil K_Y + f^*(L - K_X - \Delta) \rceil) ) \to H^i(X, \tau(X; \Delta) \tensor \O_X(L))
\]
is zero for all $i > 0$.
\end{enumerate}
\end{theorem*}

Applying the vanishing theorem above, we obtain the following extension result.

\begin{theorem*} [Theorem \ref{thm.ExampleTheoremSurjectivity}]
Let $X$ be a normal algebraic variety which is proper over a perfect (or even $F$-finite) field of characteristic $p > 0$ and $D$ is a Cartier divisor on $X$.  Suppose that $\Delta$ is a $\bQ$-divisor having no components in common with $D$ and such that $K_X + \Delta$ is $\bQ$-Cartier.  Further suppose that $L$ is a Cartier divisor on $X$ such that $L - (K_X + D + \Delta)$ is big and semi-ample.  Consider the natural restriction map
\[
\gamma \: H^0(X, \O_X(\lceil L - \Delta \rceil) \to
H^0(D, \O_D(L|_D - \lfloor \Delta \rfloor|_D ) \, \, .
\]
Then
\[
\tauCohomology^0\big(D, L|_D - (K_D + \Delta|_D)\big) \subseteq \gamma\Big(\tauCohomology^0\big(X, D + L - (K_X + D + \Delta)\big)\Big)
\]
noting that $\tauCohomology^0(D, L|_D - (K_D + \Delta|_D)) \subseteq H^0(D, \O_D(\lceil L|_D - \lfloor \Delta \rfloor|_D )\rceil)$.
In particular, if $\tauCohomology^0(D, L|_D - (K_D + \Delta|_D)) \neq 0$, then
$H^0(X, \O_X(\lceil L - \Delta \rceil) \neq 0.$
\end{theorem*}

Finally, let us remark that many of the results contained herein can be extended to excellent (but not necessarily $F$-finite) \emph{local} rings with dualizing complexes; in fact, this is the setting of C. Huneke and G. Lyubeznik in \cite{HunekeLyubeznikAbsoluteIntegralClosure}. However, moving beyond the local case is then difficult essentially because we do not know the existence of test elements.  For this reason, and also because our inspiration comes largely from (projective) geometry, we restrict ourselves to the $F$-finite setting throughout (note that any scheme of finite type over a perfect field is automatically $F$-finite).

\vskip 6pt
\noindent {\it Acknowledgements:}
The authors would like to thank Bhargav Bhatt, Christopher Hacon, Mircea Musta{\c{t}}{\u{a}} and Karen Smith for valuable conversations.  The authors would also like to thank all the referees for many very useful comments.  Finally, the authors worked on this paper while visiting the Johannes Gutenberg-Universit\"at Mainz during the summers of 2010 and 2011.  These visits were funded by the SFB/TRR45 \emph{Periods, moduli, and the arithmetic of algebraic varieties}.

\section{The trace map, multiplier ideals and test ideals}

Multiplier ideals and test ideals are prominent tools in the study of singularities of algebraic varieties.  Later in this section, we will briefly review their constructions together with those of certain variants -- the multiplier and test module, respectively -- related to various notions of rational singularities. In doing so, we emphasize a viewpoint that relies heavily on the use of the trace map of Grothendieck-Serre duality. In fact, the whole paper (particularly \mbox{Sections~\ref{sec.NadelVanishing},~\ref{sec.TransformationRulesForTestIdeals},~and~\ref{sec:surj-cohom}}) relies on some of the more subtle properties of this theory.  First however we give a brief introduction to this theory necessary to understand the main results of this paper that does \emph{not} rely on any of these more subtle aspects.  

{
\subsection{Maps derived from the trace map}\label{sec.Duality}

This section is designed to be a friendly and brief introduction to the trace map at the level we will apply it for our main theorem.  Therefore, in this subsection we only deal algebraic varieties of finite type over a perfect field (although everything can be immediately generalized to $F$-finite integral schemes).  More general versions will be discussed later in \autoref{sec.Duality}.  We will assume that the reader is familiar with canonical and dualizing modules at the level of \cite[Section 5.5]{KollarMori} and \cite[Chapter III, Section 7]{Hartshorne}.

Suppose that $\pi : Y \to X$ is a proper generically finite map of varieties of finite type over a field $k$.  A key tool in this paper is a \emph{trace map}
\[
    \tr_\pi \colon \pi_* \omega_Y \to \omega_X.
\]
Here $\omega_Y$ and $\omega_X$ denote suitable canonical modules on $Y$ and $X$ (which we assume exist). We will explain the origin of this map explicitly.  Since any generically finite map can be factored into a composition of a finite and proper birational map, it suffices to deal with these cases separately:
\begin{example}[Trace for proper birational morphism]
\label{ex:tracebirational}
Suppose that $\pi : Y \to X$ is a proper birational map between normal varieties.
In this case, the trace map $\tr_{\pi} \: \pi_*\omega_Y  \to \omega_{X}$ can be described in the following manner. Fix a canonical divisor $K_Y$ on $Y$ and set $K_X = \pi_* K_Y$ (in other words, recall by definition that $\omega_Y \cong \O_X(K_Y)$ and requiring that $\pi_* K_Y = K_X$ simply means that $K_X$ is the divisor on $X$ that agrees with $K_Y$ wherever $\pi$ is an isomorphism).  Then $\pi_* \O_Y(K_Y)$ is a torsion-free sheaf whose reflexification is just $\O_X(K_X)$, since $\pi$ is an isomorphism outside a codimension 2 set of $X$.  The trace map is simply the natural (reflexification) map $\pi_* \O_Y(K_Y) \hookrightarrow \O_X(K_X)$.
\end{example}

\begin{example}[Trace for finite morphism]
\label{ex.traceForFinite}
Suppose that $\pi \: Y \to X$ is a finite surjective map of varieties.  The trace map $\tr_{\pi} \: \pi_* \omega_Y \to \omega_X$ is then identified with the evaluation-at-1 map, $\pi_{*} \omega_{Y} \colonequals \sHom_{\O_X}(\pi_* \O_Y, \omega_X) \to \omega_X$ (the neophyte reader should take on faith that $\pi_{*} \omega_{Y} \cong \sHom_{\O_X}(\pi_* \O_Y, \omega_X)$, or see \autoref{sec.Duality} and \autoref{rem.FUpperShrieckConvenient} for additional discussion).
Assuming additionally that $\pi : Y \to X$ is a finite \emph{separable} map of normal varieties with ramification divisor $\Ram_{\pi}$, we fix a canonical divisor $K_X$ on $X$ and set $K_Y = \pi^* K_X + \Ram_{\pi}$.  Then the field-trace map
\[
\tr_{K(Y)/K(X)} : K(Y) \to K(X)
\]
restricts to a map $\pi_* \O_Y(K_Y) \to \O_X(K_X)$ which can be identified with the Grothendieck trace map (\cf \cite{SchwedeTuckerTestIdealFiniteMaps}).
\end{example}

Below in subsection \ref{sec.Duality} we will explain that this construction of a trace map is just an instance of a much more general theory contained in Grothendieck-Serre duality.
We do not need this generality for our main theorem however.

We now mention two key properties that we will use repeatedly in this basic context.

\begin{lemma}[Compatibilities of the trace map]
\label{lem.CompatOfTraceBasic}
Suppose that $\pi : Y \to X$ is a proper generically finite dominant morphism between varieties (or integral schemes).  Fix $\Tr_{\pi} : \pi_* \omega_Y \to \omega_X$ to be the trace map as above.
\begin{itemize}
\item[(a)]  If additionally, $\rho : Z \to Y$ is another proper generically finite dominant morphism and $\Tr_{\rho} : \rho_* \omega_Z \to \omega_Y$ is the associated trace map, then $\Tr_{\pi} \circ (\pi_* \Tr_{\rho}) = \Tr_{\pi \circ \rho}$.
\item[(b)]  Additionally, if $U \subseteq X$ is open and $W = \pi^{-1}(U)$, then $\tr_{\pi|_W} = \tr_{\pi}|_{U}$ (here $\tr_{\pi} : \pi_* \omega_Y \to \omega_X$ is a map of sheaves on $X$ and so can be restricted to an open set).  In other words, the trace map is compatible with open immersions.
\end{itemize}
\end{lemma}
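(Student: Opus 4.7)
Plan. My key observation is that on the integral scheme $X$ the dualizing sheaf $\omega_X$ in use is torsion-free (and even $S_2$ when $X$ is normal), so any $\O_X$-linear morphism $\varphi\colon\sF\to\omega_X$ is determined by its value at the generic point $\eta\in X$: if $\varphi_\eta=0$, the image of $\varphi$ is a torsion subsheaf of $\omega_X$ and must vanish. I will reduce both parts to generic-stalk comparisons using this principle.

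For (b), both $\tr_{\pi|_W}$ and $\tr_\pi|_W$ are $\O_U$-linear maps into the torsion-free sheaf $\omega_U=\omega_X|_U$, where $U\subseteq X$ is open. The generic point of $U$ coincides with $\eta$, and both constructions in \autoref{ex:tracebirational} and \autoref{ex.traceForFinite} — reflexification of a torsion-free sheaf, respectively evaluation at $1$ of an $\sHom$-sheaf — manifestly commute with restriction to open subschemes. Hence the two maps have the same generic stalk and therefore agree.

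For (a), pass to the generic point. By the constructions in the same two examples (and their composition for a general generically finite $\pi$), the generic fiber of $\tr_\pi$ is the canonical evaluation-at-$1$ map $\Hom_{K(X)}(K(Y),\omega_{K(X)})\to\omega_{K(X)}$, which in the separable setting coincides with the field trace $\tr_{K(Y)/K(X)}$; similarly for $\tr_\rho$ and $\tr_{\pi\rho}$. The desired identity $\Tr_\pi\circ\pi_*\Tr_\rho=\Tr_{\pi\rho}$ at $\eta$ is then the tower law
\[
\tr_{K(Z)/K(X)}=\tr_{K(Y)/K(X)}\circ\tr_{K(Z)/K(Y)},
\]
or, intrinsically, the compatibility of evaluation-at-$1$ under Hom-tensor-Hom adjunction. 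Torsion-free uniqueness upgrades this generic equality to the global equality of morphisms of sheaves.

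The main technical point is verifying that the generic fiber of the Grothendieck trace really does match the evaluation-at-$1$ map in the two model cases, so that the tower law applies. In the birational case this is automatic since the trace is an isomorphism at $\eta$; in the finite case it is essentially the definition. The conceptually cleanest proof lives entirely inside Grothendieck-Serre duality, where the trace is the counit of the adjunction $R(\pi\rho)_*\dashv(\pi\rho)^!$ and both claims follow formally from functoriality of $(-)^!$; but the generic-stalk shortcut above avoids invoking that machinery at this early stage.
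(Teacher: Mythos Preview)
Your proposal is correct. The paper's own proof is a single sentence --- ``These properties follow directly from the definition given'' --- so there is no real approach to compare against. Your route via the torsion-free property of $\omega_X$ to reduce both statements to a generic-point check is a clean way to organize the verification; the paper presumably intends a direct inspection of the constructions in \autoref{ex:tracebirational} and \autoref{ex.traceForFinite}, tracing through the Stein factorization case by case. Your method has the advantage that it handles the interaction between different Stein factorizations of $\pi$, $\rho$, and $\pi\circ\rho$ uniformly (they all collapse at $\eta$ to the field-extension picture), whereas a direct check must argue separately that the answer is independent of the chosen factorization. Your remark that the cleanest conceptual proof lives in Grothendieck--Serre duality is exactly what the paper does later in \autoref{sec.Duality}, where compatibility with composition and localization are listed as formal properties of the trace.
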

\begin{proof}
These properties follow directly from the definition given.
\end{proof}

Because much of our paper is devoted to studying singularities defined by Frobenius, utilizing \autoref{lem.CompatOfTraceBasic} we specialize \autoref{ex.traceForFinite} to the case where $\pi$ is the Frobenius.  

\begin{example}[Trace of Frobenius]
\label{ex:TraceForFrobenius}
Suppose that $X$ is a variety of finite type over a perfect field of characteristic $p > 0$.  Then consider the absolute Frobenius map $F : X \to X$, this map is not a map of varieties over $k$, but it is still a map of schemes.  Using the fact (cf. Example \ref{ex:TraceForFrobeniusGeneral}) that $\sHom_{\O_X}(F_* \O_X, \omega_X) \cong F_* \omega_X$, and applying \autoref{ex.traceForFinite}, we obtain the evaluation-at-1 trace map,
\[
\Tr_{F} : F_* \omega_X \to \omega_X.
\]
Because of the importance of this map in what follows, we will use the notation $\Phi_X$ to denote $\Tr_{F}$.  As an endomorphism of $X$ one can compose the Frobenius with itself
and obtain the $e$-iterated Frobenius $F^e$.  It follows from \autoref{lem.CompatOfTraceBasic}(a) that $\Tr_{F^e}$ then coincides with the composition of $\Tr_{F}$ with itself $e$-times (appropriately pushed forward).  Because of this, we use $\Phi_X^e$ to denote $\Tr_{F^e}$.
\end{example}

Now we come to a compatibility statement for images of trace maps that will be absolutely crucial later in the paper.  This is essentially the dual statement to a key observation from \cite{SmithFRatImpliesRat}.  We will generalize this later in \autoref{prop.EasyContainmentgeneral} and also in the proof of \autoref{prop.EasyContainmentViaGeneralMaps}.
\begin{proposition}\label{prop.EasyContainmentBase}
If $\pi \: Y \to X$ is a proper dominant generically finite map of varieties, then the image of the trace map
\[
J_\pi \colonequals \tr_\pi(\pi_*\omega_Y) \subseteq \omega_X
\]
satisfies $\Phi_{X}(F_*J_\pi) \subseteq J_\pi$.
\end{proposition}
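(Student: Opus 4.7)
\medskip

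\noindent\textbf{Proof plan.} The strategy is to exploit the simple observation that the absolute Frobenius commutes with any morphism: we have a commutative square
\[
\xymatrix{
Y \ar[r]^{F_Y} \ar[d]_{\pi} & Y \ar[d]^{\pi} \\
X \ar[r]_{F_X} & X
}
\]
i.e.\ $\pi \circ F_Y = F_X \circ \pi$. Since both $\pi$ and $F$ (on either variety) are proper generically finite dominant morphisms, \autoref{lem.CompatOfTraceBasic}(a) applies to each of the two factorizations of this common map, producing the identity
\[
\tr_\pi \circ \pi_*(\Phi_Y) \;=\; \Phi_X \circ F_*(\tr_\pi)
\]
as maps $\pi_*(F_Y)_*\omega_Y = F_*\pi_*\omega_Y \to \omega_X$ (the two sources agreeing because $F$ is the identity on underlying topological spaces, and $\pi \circ F_Y$ and $F_X\circ\pi$ are the same morphism).

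With this compatibility in hand, the inclusion follows by a direct image chase. By definition $F_*J_\pi = F_*\tr_\pi(F_*\pi_*\omega_Y)$, so
\[
\Phi_X(F_*J_\pi) \;=\; \Phi_X\bigl(F_*\tr_\pi(F_*\pi_*\omega_Y)\bigr) \;=\; \tr_\pi\bigl(\pi_*\Phi_Y(\pi_*(F_Y)_*\omega_Y)\bigr) \;\subseteq\; \tr_\pi(\pi_*\omega_Y) \;=\; J_\pi,
\]
using the commutation just established, together with the trivial fact that $\pi_*\Phi_Y$ lands in $\pi_*\omega_Y$ and $\tr_\pi(\pi_*\omega_Y) = J_\pi$ by definition.

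\smallskip

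\noindent\textbf{Anticipated obstacle.} There is essentially no obstacle here beyond bookkeeping: the only subtlety is checking that \autoref{lem.CompatOfTraceBasic}(a) is legitimately applicable to the Frobenius, even though Frobenius is not a morphism over the base field $k$. The lemma is stated for morphisms of varieties (or integral schemes), and Frobenius on an $F$-finite variety is a finite surjective morphism of schemes, so this is fine. One should also verify the identification $\pi_*(F_Y)_*\omega_Y = (F_X)_*\pi_*\omega_Y$, but this is immediate from $\pi \circ F_Y = F_X \circ \pi$ on underlying topological spaces and the definition of pushforward. No further subtleties arise.
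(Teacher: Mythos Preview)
Your proof is correct and follows essentially the same approach as the paper: both use the commutative square $\pi \circ F_Y = F_X \circ \pi$, apply \autoref{lem.CompatOfTraceBasic}(a) to obtain the commuting square of trace maps $\tr_\pi \circ \pi_*\Phi_Y = \Phi_X \circ F_*\tr_\pi$, and then read off the inclusion $\Phi_X(F_*J_\pi) \subseteq J_\pi$ directly. Your anticipated obstacle about Frobenius not being a $k$-morphism is handled the same way in the paper (it is a morphism of schemes, which is all the lemma requires).
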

\begin{proof}
Consider the commutative diagram:
\[
\xymatrix{
Y \ar[d]_{\pi} \ar[r]^{F} & Y \ar[d]^{\pi}\\
X \ar[r]_F & X
}
\]
where the horizontal maps are the Frobenius on $X$ and $Y$ respectively.
It follows from \autoref{lem.CompatOfTraceBasic}(a) that there is a commutative diagram
\[
\xymatrix@C=50pt{
F_* \pi_* \omega_Y \ar[d]_{F_* \Tr_{\pi}} \ar[r]^{\pi_* \Phi_Y} & \pi_* \omega_Y \ar[d]^{\Tr_{\pi}}\\
F_* \omega_X \ar[r]_{\Phi_X} & \omega_X.
}
\]
The claimed result follows immediately.
\end{proof}

\subsection{Pairs}
\label{sec.Pairs}

The next step is to extend the trace map to incorporate divisors.  Suppose that $X$ is a normal integral scheme.
A \emph{$\bQ$-divisor} $\Gamma$ on $X$ is a formal linear combination of prime Weil divisors with coefficients in $\bQ$. Writing $\Gamma = \sum b_i B_i$ where the $B_i$ are distinct prime divisors, we use $\roundup{\Gamma} = \sum \roundup{b_i} B_i$  and $\lfloor \Gamma \rfloor = \sum \rounddown{b_i} B_i$ to denote the round up and round down of $\Gamma$, respectively. We say $\Gamma$ is \emph{$\bQ$-Cartier} if there exists an integer $n > 0$ such that $n \Gamma$ is an integral (\textit{i.e.} having integer coefficients) Cartier divisor, and the smallest such $n$ is called the \emph{index} of $\Gamma$.  An integral divisor $K_X$ with $\O_X(K_X) \cong \omega_X$ is called a \emph{canonical divisor}. 

\begin{definition}
A \emph{pair $(X, \Delta)$} is the combined  data of a normal integral scheme $X$ together with a $\bQ$-divisor $\Delta$ on $X$.
The pair $(X, \Delta)$ is called \emph{log-$\bQ$-Gorenstein} if $K_X + \Delta$ is $\bQ$-Cartier.
\end{definition}

We emphasize that log-$\bQ$-Gorenstein pairs need \emph{not} be Cohen-Macaulay.

\begin{convention}
\label{rem.HowToEmbedOmegaXKX}
For $X$ normal and integral let $\Delta$ be a $\Q$-divisor on $X$.
The choice of a rational section $s \in \omega_X$ gives a canonical divisor $K_X=K_{X,s} = \operatorname{div} s$ and also a map $\omega_X \subseteq \omega_X \tensor K(X) \to[s \mapsto 1] K(X)$. Then the image of the inclusion $\omega_X(-K_{X,s}-\lfloor\Delta\rfloor) \into[s \mapsto 1] K(X)$ is the subsheaf $\O_X(-\lfloor \Delta \rfloor) \subseteq K(X)$. Note the image is independent of the choice of $s$ but the inclusion maps for different sections may differ by multiplication with a unit of $\O_X$.

Hence, every $\O_{X}$-submodule of $\omega_X(-\lfloor K_X + \Delta \rfloor)$ corresponds uniquely to an \mbox{$\O_{X}$-submodule} of $ \O_{X}(-\lfloor \Delta \rfloor )$ (or even $\subseteq \O_X$ when $\Delta$ is effective).  As such, we have chosen to accept certain abuses of notation in order to identify such submodules.  For example, we may write $\omega_X(-\lfloor K_X + \Delta \rfloor) \subseteq K(X)$ (or $\subseteq \O_X$ when $\Delta$ is effective);  however, while it is canonical as a subset (and equals $\O_{X}(-\lfloor \Delta \rfloor)$), the actual inclusion map involves the choice of a section (and is well defined only up to a multiplication by a unit of $\O_{X}$).
\end{convention}

We now state a result incorporating divisors into the trace map. 

\begin{proposition}
\label{prop.TraceInSomeGenerality}
Suppose that $\pi: Y \to X$ is a proper dominant generically finite morphism between normal varieties, and let $\Delta$ be a $\Q$-divisor on $X$ such that  $K_X+\Delta$ is $\Q$-Cartier. Then the trace map of $\pi$ induces a non-zero map
\[
   \tr_{\pi} \: \pi_*\omega_Y(-\lfloor \pi^*(K_X+\Delta) \rfloor) \to \omega_X(-\lfloor K_X+\Delta \rfloor) \, \, .
\]
\end{proposition}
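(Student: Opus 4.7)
\medskip
\noindent\textbf{Plan.}
The idea is to show that the already-existing Grothendieck trace $\tr_\pi\colon \pi_*\omega_Y \to \omega_X$ (built from \autoref{ex:tracebirational} and \autoref{ex.traceForFinite} via the factorization of a generically finite proper map) carries the subsheaf $\pi_*\omega_Y(-\lfloor \pi^*(K_X+\Delta)\rfloor)$ into $\omega_X(-\lfloor K_X+\Delta\rfloor)$. Non-vanishing is automatic, since the restricted map agrees with $\tr_\pi$ at the generic point, which is non-zero. Because $X$ is normal, $\omega_X(-\lfloor K_X+\Delta\rfloor)$ is a reflexive, hence $S_2$, rank-one sheaf, so this containment may be checked on any big open subset of $X$. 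Combining Zariski's Main Theorem with normality of $X$ shows that $\pi$ is finite outside a codimension-$\geq 2$ closed subset of $X$, and $X\setminus X_{\Reg}$ is also of codimension $\geq 2$; restricting to the intersection of their complements, one may assume $X$ is regular and $\pi$ is finite.

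Over this locus, $K_X$ is Cartier, so $\Delta = (K_X+\Delta)-K_X$ is $\bQ$-Cartier and $\pi^*\Delta$ is a well-defined $\bQ$-divisor on $Y$. Choosing $K_Y = \pi^*K_X + \Ram_\pi$ as in \autoref{ex.traceForFinite} identifies $\tr_\pi$ with (the sheaf-theoretic extension of) the field trace $\tau \colonequals \tr_{K(Y)/K(X)}$. Since $\pi^*K_X$ is integral, \autoref{rem.HowToEmbedOmegaXKX} gives the identifications
\[
\omega_Y(-\lfloor \pi^*(K_X+\Delta)\rfloor) \cong \O_Y(\Ram_\pi - \lfloor \pi^*\Delta\rfloor), \qquad \omega_X(-\lfloor K_X+\Delta\rfloor) \cong \O_X(-\lfloor \Delta\rfloor),
\]
so the proposition reduces to proving $\tau\bigl(\pi_*\O_Y(\Ram_\pi - \lfloor \pi^*\Delta\rfloor)\bigr) \subseteq \O_X(-\lfloor \Delta\rfloor)$.

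Both sides remain reflexive rank-one, so it suffices to check this at each codimension-one point $P\in X$. Fix a uniformizer $\pi_P\in\O_{X,P}$, let $\epsilon\colonequals v_P(\Delta)\in\bQ$, and list the primes $Q_1,\ldots,Q_r$ of $Y$ lying over $P$ with valuations $w_j$, ramification indices $e_j$, and coefficients $r_j$ of $\Ram_\pi$ at $Q_j$. A section $f\in K(Y)$ of the left-hand side satisfies $w_j(f)\geq \lfloor e_j\epsilon\rfloor - r_j$ for each $j$. Setting $g\colonequals \pi_P^{-\lfloor \epsilon\rfloor}f$, the elementary inequality $\lfloor e_j\epsilon\rfloor \geq e_j\lfloor \epsilon\rfloor$ gives $w_j(g) = w_j(f) - e_j\lfloor\epsilon\rfloor \geq -r_j$, so $g$ is a local section of $\O_Y(\Ram_\pi)$ at each $Q_j$. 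The defining property of $\Ram_\pi$ (\ie that $\tau$ sends $\O_Y(\Ram_\pi)$ into $\O_X$) then gives $\tau(g)\in\O_{X,P}$, whence $v_P(\tau(f)) = v_P(\pi_P^{\lfloor\epsilon\rfloor}\tau(g)) \geq \lfloor \epsilon\rfloor$, which is precisely the desired membership $\tau(f)\in\O_X(-\lfloor\Delta\rfloor)_P$.

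The step I expect to be the main obstacle is the sheaf-theoretic bookkeeping: matching the Grothendieck trace $\tr_\pi$ with the field trace $\tau$ and identifying the various reflexive sheaves above with fractional ideals in $K(Y)$ and $K(X)$ through \autoref{rem.HowToEmbedOmegaXKX}. Once this dictionary is in place, the codimension-one verification is a clean floor inequality that is insensitive to whether the ramification is tame or wild.
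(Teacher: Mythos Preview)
Your argument is correct and is essentially what the paper intends when it says the result follows from Examples~\ref{ex:tracebirational} and~\ref{ex.traceForFinite} (and is proved more carefully in \autoref{prop.TraceInGenerality}).  Both you and the paper restrict to the regular locus of $X$, where the target sheaf is reflexive and $\lfloor K_X+\Delta\rfloor$ is Cartier, and both rely on the same elementary floor inequality (in your notation $\lfloor e_j\epsilon\rfloor \geq e_j\lfloor\epsilon\rfloor$; in the paper's, $\pi^*\lfloor K_X+\Delta\rfloor \leq \lfloor\pi^*(K_X+\Delta)\rfloor$).  The difference is packaging: the paper simply tensors the trace map $\tr_\pi\colon \pi_*\omega_Y\to\omega_X$ by the invertible sheaf $\O_U(-\lfloor K_X+\Delta\rfloor)$ via the projection formula and then precomposes with the inclusion $\omega_Y(-\lfloor\pi^*(K_X+\Delta)\rfloor)\hookrightarrow \omega_Y(-\pi^*\lfloor K_X+\Delta\rfloor)$, whereas you further reduce to the locus where $\pi$ is finite (valid by Stein factorization and Zariski's Main Theorem) and carry out an explicit DVR computation.

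One point to flag: your identification of $\tr_\pi$ with the field trace $\tr_{K(Y)/K(X)}$, and your use of the classical ramification divisor $\Ram_\pi$, tacitly assumes that $\pi$ is generically \emph{separable}.  The proposition as stated does not impose this, and over a perfect field there are still inseparable generically finite maps between normal varieties (e.g.\ relative Frobenius).  The paper's treatment in \autoref{prop.TraceInGenerality} sidesteps this entirely by never leaving the abstract Grothendieck trace; since it only tensors by a line bundle and applies the projection formula, no separability hypothesis enters.  Your argument goes through verbatim in the separable case (which covers all the applications in the paper), and in the inseparable case one can salvage it by replacing the field trace and classical $\Ram_\pi$ with their Grothendieck-trace analogues as in \cite{SchwedeTuckerTestIdealFiniteMaps}, but it is cleaner to adopt the paper's projection-formula formulation.
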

\begin{proof}
This result is simple based upon Examples \ref{ex:tracebirational} and \ref{ex.traceForFinite} and so we leave it to the reader to verify.  We carefully prove a more general result in Propositions \ref{thm.TraceNonzero} \ref{prop.TraceInGenerality} below.
\end{proof}

}

\subsection{Duality and the trace map}
\label{sec.Duality}

This section restates the results of the previous section in the more general language of dualizing complexes.
While these results will be important for generalizations of our main theorem and for some of the Kodaira-type vanishing theorems, they are not needed for the main result stated in the introduction.  Therefore, we invite the reader to skip the next section and instead read ahead to \autoref{sec.MultiplierRational}.

From now on we assume that all schemes $X$ are Noetherian, excellent, separated and \emph{possess a dualizing complex} $\dcx_X$. This is a relatively mild condition, since, for example, all Noetherian schemes that are of finite type over a local Gorenstein ring of finite Krull dimension have a dualizing complex \cite[Chapter V, Section 10]{HartshorneResidues}. By definition, \cite[Chapter V, Section 2]{HartshorneResidues}, a \emph{dualizing complex on $X$} is an object in $D^b_{\coherent}(X)$ which has finite injective dimension and such that the canonical map $\O_X \to \myR\sHom_X(\dcx_X, \dcx_X)$ is an isomorphism in $D^b_{\coherent}(X)$.

Since dualizing complexes are defined by properties in the derived category, they are only unique up to quasi-isomorphism. But even worse, if $\dcx$ is a dualizing complex, then so is $\dcx \tensor \sL[n]$ for any integer $n$ and line-bundle $\sL$. But this is all the ambiguity there is for a connected scheme: if $\Omega^\mydot$ is another dualizing complex then there is a unique line-bundle $\sL$ and a unique shift $n$ such that $\Omega^\mydot$ is quasi-isomorphic to $\dcx \tensor \sL[n]$, see \cite[Theorem V.3.1]{HartshorneResidues}. The ambiguity with respect to shift is the least serious. For this, we say that a dualizing complex on an integral scheme (or a local scheme) is said to be \emph{normalized} if the first non-zero cohomology of $\dcx_X$ lies in degree $(-\dim X)$.

\newcommand{\upS}{\mathrm{S}}
A \emph{canonical module} $\dsh_X$ on a reduced and connected scheme $X$ is a coherent \mbox{$\O_X$-module} that agrees with the first non-zero cohomology of a dualizing complex $\dcx_X$. In particular, for a normalized dualizing complex $\dcx_X$, its $(-\dim X)$-th cohomology $\dsh_X \colonequals \h[-\dim X] \dcx_X$ is a canonical module; since it is the first non-zero cohomology there is a natural inclusion $\dsh_X[\dim X] \into \dcx_X$. If $X$ is $\upS_1$, \ie satisfies Serre's first condition, then $\omega_X$ is $\upS_2$ by \cite[Lemma 1.3]{HartshonreGeneralizedDivisorsAndBiliaison}.  Also see \cite[Corollary 5.69]{KollarMori} where it is shown that any (quasi-)projective scheme over a field has an $\upS_2$ canonical module.  We also note that if $X$ is integral, then $\omega_X$ can be taken to be the $\upS_2$-module agreeing with the dualizing complex on the Cohen-Macaulay locus of $X$.

We shall make extensive use of the trace map from Grothendieck-Serre duality, see \cite{ConradGDualityAndBaseChange,HartshorneResidues}.  For $S$ a base scheme, Noetherian, excellent, and separated, we consider the category $\Sch_{S}$ of $S$-schemes (essentially) of finite type over $S$, with $S$-morphisms between them. We assume as before that $S$ has a dualizing complex $\dcx_S$. Then Grothendieck duality theory provides us with a functor $f^!$ for every $S$-morphism $f\: Y \to X$ with the properties
\begin{enumerate}
\item $(\blank)^!$ is compatible with composition, \ie if $g \: Z \to Y$ is a further $S$-morphism, then there is a natural isomorphism of functors $(f \circ g)^! \cong g^! \circ f^!$ which is compatible with triple composition.
\item If $f$ is of finite type, and $\dcx_X$ is a dualizing complex on $X$ then $f^!\dcx_X$ is a dualizing complex on $Y$.  If additionally $f$ is a dominant morphism of integral schemes and $\dcx_{X}$ is normalized, then $f^{!}\dcx_{X}$ is also normalized.
\item If $f$ is a finite map, then $f^!( \blank ) = \myR\sHom_{\O_X}^{\mydot} (f_*\O_Y, \blank )$ viewed as a complex of \mbox{$\O_Y$-modules}. Note that the right hand side is defined for \emph{any} finite morphism, not just for an $S$-morphism.
\end{enumerate}
Therefore we may define for each $S$-scheme $X$ with structural map $\pi_X\: X \to S$ the dualizing complex $\dcx_X \colonequals \pi_X^!\dcx_S$.
  After this choice of dualizing complexes on $\Sch_S$, the compatibility with composition now immediately implies that for any $S$-morphism $f \: Y \to X$ we have a canonical isomorphism $f^!\dcx_X \cong \dcx_Y$.

  \begin{remark}
  In the remainder of the paper, the base scheme $S$ will typically either be a field, or it will be the scheme $X$ we are interested in.  Note that in either case the absolute Frobenius map $F : X \to X$ is not a map of $S$-schemes with the obvious choice of (the same) structural maps.  However, using the composition $F : X \to X \to S$, we do obtain a new $S$-scheme structure for $X$ and so we view $F : X \to X$ as a map of different $S$-schemes.
  \end{remark}

A key point in the construction of $(\blank)^!$ is that for $f \: Y \to X$ proper there is a natural transformation of functors
\[
   \myR f_*f^! \to \id_X
\]
called the trace map which induces a natural isomorphism of functors in the derived category
\[
    \myR f_* \myR\sHom_Y(\sM^\mydot, f^!\sN^\mydot)\to \myR \sHom_Y(\myR f_*\sM^\mydot, \sN^\mydot)
\]
for any bounded above complex of quasi-coherent $\O_Y$-modules $\sM^\mydot$ and bounded below complex of coherent $\O_X$-modules $\sN^\mydot$. This statement, which expresses that $f^!$ is right adjoint to $\myR f_*$ for proper $f$, is the duality theorem in its general form.

Applying trace map to the dualizing complex $\dcx_X$ we obtain
\begin{equation}\label{eq.trace}
\tr_{f^{\mydot}}: \myR f_*\dcx_Y \cong \myR f_*f^!\dcx_X \to \dcx_X
\end{equation}
which we also refer to as the trace map and denote by $\tr_{f^{\mydot}}$.  Equivalently, by the duality theorem, the trace map is Grothendieck-Serre dual to the corresponding map of structure sheaves $f \: \O_{X} \to \myR f_{*}\O_{Y}$.
The key properties of the trace relevant for us are
\begin{enumerate}
\item Compatibility with composition, \textit{i.e.} if $g \: Z \to Y$ is another proper $S$-morphism then $\tr_{(f \circ g)^{\mydot}} = \tr_{f^{\mydot}} \circ \myR f_* \tr_{g^{\mydot}}$.
\item The trace map is compatible with certain base changes. In general this is a difficult and subtle issue (see \cite{ConradGDualityAndBaseChange}); however, we will only need this for open inclusions $U \subseteq X$ and localization at a point, where it is not problematic.
\item In the case that $f$ is finite, $\tr_{f^{\mydot}}$ is locally given by evaluation at $1$,
\[
\tr_{f^{\mydot}}: \myR f_* f^! \dcx_X = \myR f_* \myR \sHom_{\O_X}^{\mydot}(f_* \O_Y, \dcx_X) \to \dcx_X
\]
\end{enumerate}

For a proper dominant morphism $\pi \: Y \to X$ of integral schemes, the trace gives rise to maps on canonical modules as well (not just dualizing complexes).
Taking the ($-\dim X$)-th cohomology of $\tr_{\pi^{\mydot}} \: \myR \pi_* \dcx_Y \to \dcx_X$ gives
\begin{equation}
\label{eq:tracedualizing}
\tr_{\pi^{\mydot}} \: \myH^{-\dim X}(\myR \pi_*\dcx_Y) \to \myH^{-\dim X}\dcx_X  =: \omega_{X}
\end{equation}
which we will also denote by $\tr_{\pi^{\mydot}}$.
Further composing with the inclusion $\omega_{Y}[\dim {Y}] \to \dcx_{Y}$ then gives
\begin{equation}\label{eq.TraceCanonical}
    \tr_\pi \: \myH^{\dim Y - \dim X}\myR \pi_*\omega_Y \to \myH^{-\dim X}\myR \pi_*\dcx_Y \to[\tr_{\pi^{\mydot}}] \omega_X
\end{equation}
which we also refer to as the trace map and  now denote by $\tr_\pi$. Note that the above construction remains compatible with localization on the base scheme, which we make use of below in showing under mild assumptions that this trace map is non-zero.

\begin{proposition}\label{thm.TraceNonzero}
If $\pi \: Y \to X$ is a proper dominant morphism of integral schemes, then the trace map
\[
    \tr_\pi \: \myH^{\dim Y-\dim X}\myR\pi_*\omega_Y \to \omega_X
\]
is non-zero.
\end{proposition}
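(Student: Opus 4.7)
The plan is to reduce to the case where $X = \Spec k$ is the spectrum of a field via localization at the generic point, and then invoke Grothendieck-Serre duality to identify the trace with a non-zero morphism. Throughout let $d = \dim Y - \dim X$.

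Since the formation of $(\blank)^{!}$ and of the trace is compatible with open immersions and filtered limits thereof (i.e., with localization) on the base, we may localize $X$ at its generic point. Setting $k = K(X)$ and giving $\Spec k$ its tautological dualizing complex $k$ in degree $0$, we reduce to showing: for the generic fiber $\pi_\eta \: Y_\eta \to \Spec k$, which is proper and integral of dimension $d$ over $k$, the induced trace
\[
    \tr_{\pi_\eta} \: H^d(Y_\eta, \omega_{Y_\eta}) \to k
\]
is non-zero. Here $\dcx_{Y_\eta} = \pi_\eta^{!}(k)$ is a normalized dualizing complex on $Y_\eta$ with $\omega_{Y_\eta} = \myH^{-d}(\dcx_{Y_\eta})$.

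Now I apply Grothendieck duality. The adjunction $(\myR(\pi_\eta)_*, \pi_\eta^{!})$ yields
\[
    \Hom_{D(k)}\big(\myR(\pi_\eta)_* \omega_{Y_\eta}[d],\, k\big) \cong \Hom_{D(Y_\eta)}\big(\omega_{Y_\eta}[d],\, \dcx_{Y_\eta}\big),
\]
and since $\myR(\pi_\eta)_* \omega_{Y_\eta}[d]$ is a complex of $k$-vector spaces with cohomology supported in non-positive degrees, the left-hand side equals $\Hom_k(H^d(Y_\eta, \omega_{Y_\eta}), k)$. By construction, $\tr_{\pi_\eta}$ factors as the trace on dualizing complexes $\tr_{\pi_\eta^{\mydot}}$ (which corresponds to the identity $\dcx_{Y_\eta} \to \dcx_{Y_\eta}$ under the adjunction) precomposed with $\myR(\pi_\eta)_*$ of the natural inclusion $\omega_{Y_\eta}[d] \into \dcx_{Y_\eta}$; by naturality $\tr_{\pi_\eta}$ therefore corresponds under the adjunction to this inclusion itself. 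But the inclusion is non-zero, as it induces the identity on $\myH^{-d}$, so $\tr_{\pi_\eta}$ is a non-zero $k$-linear functional on $H^d(Y_\eta, \omega_{Y_\eta})$---in particular a non-zero map.

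The main subtlety I anticipate is tracking the normalization of dualizing complexes through the localization step: restricting a normalized $\dcx_X$ from $X$ to its generic point introduces a degree shift by $[-\dim X]$ to remain normalized on $\Spec K(X)$, and correspondingly $\dcx_Y$ must be re-shifted on $Y_\eta$. This is purely bookkeeping, though, and is handled by the compatibility of $\tr_{\pi^{\mydot}}$ with base localization noted in property (2) of the trace, together with the compatibility of the natural truncation $\omega_Y[\dim Y] \to \dcx_Y$ with the same localization.
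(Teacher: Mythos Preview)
Your argument is correct and takes a genuinely different route from the paper. Both proofs first localize at the generic point of $X$ to reduce to the case $X = \Spec k$; after that they diverge. The paper proceeds in two steps: it shows that $\mathbb{H}^{0}(Y,\dcx_{Y}) \to k$ is nonzero (as the Serre dual of $k \hookrightarrow H^{0}(Y,\O_{Y})$), and then it uses a hypercohomology spectral sequence together with an auxiliary lemma bounding $\dim \supp \myH^{-j}\dcx_{Y}$ to prove that $H^{d}(Y,\omega_{Y}) \twoheadrightarrow \mathbb{H}^{0}(Y,\dcx_{Y})$ is surjective. You bypass this entirely by observing that over a field the derived map $\myR\pi_{*}\omega_{Y}[d] \to k$ is nonzero if and only if its $H^{0}$ is, and that under the duality adjunction this derived map corresponds to the nonzero inclusion $\omega_{Y}[d] \hookrightarrow \dcx_{Y}$. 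Your approach is slicker for the proposition as stated; however, note that the paper's surjectivity statement $H^{d}(Y,\omega_{Y}) \twoheadrightarrow \mathbb{H}^{0}(Y,\dcx_{Y})$ and the accompanying spectral sequence argument are invoked again later (in the proof of \autoref{prop.TraceInGenerality} and of \autoref{cor.TestIdealConstructedForProperDominantMaps}), so the extra work in the paper's proof is not wasted.
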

\begin{proof}
To show the statement we may assume that $X = \Spec K$ for $K$ a field. The map $\mathbb{H}^0(Y,\dcx_Y) = \myH^0 \myR\pi_*\dcx_Y \to \dcx_K\cong K$ is non-zero as it is Grothendieck-Serre dual to the natural inclusion $K \to H^0(Y,\O_Y)$.
Hence it is sufficient to show that the map $\myH^{\dim Y}(\myR\pi_* \omega_Y) \to \myH^0(\myR\pi_* \dcx_Y)$ is surjective. If $Y$ is Cohen-Macaulay, \ie $\omega_Y[\dim Y]=\dcx_Y$, we are done. More generally consider the hypercohomology spectral sequence $E_2=H^i(Y,\myH^j\dcx_Y) \Rightarrow \mathbb{H}^{i+j}(Y,\dcx_Y)$. The only terms contributing to $\mathbb{H}^0(Y,\dcx_Y)$ are $H^i(Y,\myH^j\dcx_Y)$ with $i+j=0$. In the next lemma, it is shown that $\dim \supp \myH^j\dcx_Y < -j$ for $j > -\dim {Y}$, which hence implies that $H^{\dim Y}(Y,\omega_Y) = \myH^{\dim Y}(\myR\pi_* \omega_Y)$ is the only non-vanishing term among them and thus surjects onto $\myH^0(\myR\pi_* \dcx_Y)$ as claimed.
\end{proof}

\begin{lemma}
Let $(R,\mathfrak{m})$ be an equidimensional local $\textnormal{S}_1$ ring of dimension $n$ with normalized dualizing complex $\dcx_R$. Then
\[
    \dim \supp \myH^{-j} \dcx_R < j
\]
for $j < n$.
\end{lemma}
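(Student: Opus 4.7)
My plan is to show that every prime $\mathfrak{p} \in \supp \myH^{-j}\dcx_R$ satisfies $\dim R/\mathfrak{p} < j$ whenever $j < n$, which is what the dimension inequality amounts to. Setting $d \colonequals \dim R/\mathfrak{p}$, I will compare $(\dcx_R)_\mathfrak{p}$ with the normalized dualizing complex on $R_\mathfrak{p}$. Since the existence of $\dcx_R$ forces $R$ to be catenary, equidimensionality gives $\dim R_\mathfrak{p} = n - d$. A degree count---using that $\omega_R = \myH^{-n}\dcx_R$ has full support on $\Spec R$ in the equidimensional $\upS_1$ setting, so the first non-vanishing cohomology of $(\dcx_R)_\mathfrak{p}$ sits in degree $-n$---identifies $(\dcx_R)_\mathfrak{p}[-d]$ as the normalized dualizing complex on $R_\mathfrak{p}$.

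With this shift in hand, I compute
\[
   (\myH^{-j}\dcx_R)_\mathfrak{p} \;=\; \myH^{-j}\bigl((\dcx_R)_\mathfrak{p}\bigr) \;=\; \myH^{d-j}\bigl((\dcx_R)_\mathfrak{p}[-d]\bigr),
\]
and since the cohomology of a normalized dualizing complex on a local ring of dimension $n-d$ is concentrated in degrees $[-(n-d),0]$, non-vanishing already forces $d \le j$. To upgrade this to strict inequality when $j < n$, I argue by contradiction: if $d = j$, then the right-hand side above is the top cohomology $\myH^{0}$ of a normalized dualizing complex on the local ring $R_\mathfrak{p}$, whose dimension is $n-j \ge 1$. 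Because $\upS_1$ localizes, $R_\mathfrak{p}$ is $\upS_1$ and so has positive depth, giving $H^{0}_{\mathfrak{p} R_\mathfrak{p}}(R_\mathfrak{p}) = 0$. Local duality on $R_\mathfrak{p}$ identifies the top cohomology $\myH^{0}$ of its normalized dualizing complex with the Matlis dual of this local cohomology, so it must vanish---contradicting $\mathfrak{p} \in \supp \myH^{-j}\dcx_R$.

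The step I expect to require the most care is booking the shift correctly and cleanly identifying $(\dcx_R)_\mathfrak{p}[-d]$ as the normalized dualizing complex of $R_\mathfrak{p}$; this is exactly where equidimensionality is indispensable, since it pins down the codimension function of $\dcx_R$ as $\mathfrak{p} \mapsto -\dim R/\mathfrak{p}$ (without equidimensionality the shift could vary along different components). Once this is settled, both the range bound $d \le j$ and the $\upS_1$-driven vanishing of the borderline $\myH^{0}$ term are immediate consequences of local duality and the standard range of a normalized dualizing complex.
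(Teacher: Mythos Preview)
Your argument is correct and uses the same key ingredients as the paper's proof: the shift identification $(\dcx_R)_\mathfrak{p}[-d]$ as the normalized dualizing complex on $R_\mathfrak{p}$ (via catenarity and equidimensionality), and the local duality computation showing $\myH^0$ of a normalized dualizing complex vanishes on an $\upS_1$ local ring of positive dimension.

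The organization differs slightly. The paper proceeds by induction on $n$: it first establishes the $j=0$ case via the $\upS_1$/local-duality argument, then for $j \geq 1$ localizes at a minimal prime $\mathfrak{p}$ of $\supp \myH^{-j}\dcx_R$ with $\dim R/\mathfrak{p} = c$ and invokes the inductive hypothesis on $R_\mathfrak{p}$ (of dimension $n-c < n$) to obtain $0 < j - c$. You instead argue directly at an arbitrary prime in the support: the range of the normalized dualizing complex immediately gives $d \leq j$, and the borderline $d = j$ is ruled out by exactly the paper's $j=0$ computation applied to $R_\mathfrak{p}$. Your version is a clean unwinding of the induction---what the paper packages as an inductive step you expose as ``range bound plus vanishing of $\myH^0$,'' which is arguably more transparent. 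Both buy the same thing; neither is materially shorter or more general.
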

\begin{proof}
\newcommand{\frp}{\mathfrak{p}}
By local duality $\myH^0 \dcx_R$ is Matlis dual to $\Gamma_\mathfrak{m}(R)$ which by the $\textnormal{S}_1$ condition is zero. This shows the lemma for $j=0$, and hence in particular for $n=1$, so that we may proceed by induction on $n$.

Assuming that $1 \leq j < n$, if $\dim \supp \myH^{-j}\dcx_R = 0$ we are done.  Otherwise, we have $c = \dim \supp \myH^{-j}\dcx_R > 0$ and can take $\frp \neq \bm$ to be a minimal prime in the support of $\myH^{-j}\dcx_R$ with $\dim R/\frp = c$.  Since $\dim R_{\frp} = n - c < n$, we have that $(\dcx_{R})_{\frp}[-c]$ is a normalized dualizing complex for $R_{\frp}$. Thus, by the induction hypothesis it follows
\[
0 = \dim \supp \myH^{-j}(\dcx_R)_\frp = \dim \supp \myH^{-j+c}(\dcx_R)_\frp[-c] < j-c
\]
so that $c = \dim \supp \myH^{-j}\dcx_R < j$ as desired.
\end{proof}

We address now a particularly subtle issue surrounding the upper shriek functor and Frobenius.

\begin{example}[Trace of Frobenius and behavior of dualizing complexes]
\label{ex:TraceForFrobeniusGeneral}
A particularly important setting in this paper is that of a scheme $X$ essentially of finite type over an \emph{$F$-finite} base scheme $S$ of positive characteristic $p$ (\textit{e.g.} a perfect
 field of characteristic $p > 0$). This means simply that the (absolute) Frobenius or $p$-th power map $F \: X \to X$ is a finite morphism, and hence proper. Thus we have the ``evaluation at 1" trace map
\[
   \tr_{F^{\mydot}} \: F_*F^! \dcx_X \cong F_*\myR\sHom_X(F_*\O_X,\dcx_X) \to \dcx_X \, \, .
\]
However, note that since the Frobenius $F$ is generally not an $S$-morphism we do not have, a priori, that $F^!\dcx_X \cong \dcx_X$. This needs an additional assumption, namely that this property holds for the base scheme $S$. Using a fixed isomorphism $F^!\dcx_S \cong \dcx_S$, the compatibility of $(\blank)^!$ with composition in the commutative diagram
\[
\xymatrix{
X \ar[d]_{\pi} \ar[r]^F & X \ar[d]^{\pi} \\
S \ar[r]_F & S
}
\]
shows that indeed
\[
    F^! \dcx_X \cong F^! \pi^! \dcx_S \cong \pi^! F^! \dcx_S \cong \pi^! \dcx_S \cong \dcx_X \, \, .
\]
\end{example}

\begin{convention}
  For simplicity, we will always assume that all our base schemes $S$ of positive characteristic $p$ are $F$-finite and
  satisfy $F^! \dcx_S \cong \dcx_S$ for the given choice of dualizing
  complex $\dcx_S$.  This is automatic if $S$ is the spectrum of a local ring (\textit{e.g.} a field).
\end{convention}

\begin{remark}
\label{rem.FUpperShrieckConvenient}
The assumption $F^!\dcx_S \cong \dcx_S$ is convenient but could nonetheless be avoided.  Notice that regardless, $F^! \dcx_S$ is a dualizing complex, and so it already agrees with $\dcx_S$ up to a shift and up to tensoring with an invertible sheaf.  It is easy to see that the shift is zero since $F$ is a finite map,  thus we have $F^! \dcx_S \cong \dcx_S \tensor_{\O_S} \sL$ for some invertible sheaf $\sL$.  One option would be to carefully keep track of $\sL$ throughout all constructions and arguments -- this we chose to avoid.  In any case, if one is willing to work locally over the base $S$ (as is the case with most of our main theorems) one may always assume that $F^! \dcx_S = \dcx_S$ simply by restricting to charts where $\sL$ is trivial.
\end{remark}

\autoref{prop.TraceInSomeGenerality} and \autoref{thm.TraceNonzero} can be combined as follows.

\begin{proposition}
\label{prop.TraceInGenerality}
Suppose that $\pi: Y \to X$ is a proper dominant morphism between normal integral schemes, and let $\Delta$ be a $\Q$-divisor on $X$ such that  $K_X+\Delta$ is $\Q$-Cartier. Then the trace map of $\pi$ induces a non-zero map
\[
   \tr_{\pi} \: \myH^{\dim Y-\dim X}\myR\pi_*\omega_Y(-\lfloor \pi^*(K_X+\Delta) \rfloor) \to \omega_X(-\lfloor K_X+\Delta \rfloor) \, \, .
\]
 Similarly, if additionally $\pi^*(K_X + \Delta)$ is a Cartier divisor, then we have another non-zero map
\[
    \Tr_{\pi^{\mydot}} : \myH^{-\dim X}\myR\pi_*\omega_Y^{\mydot}(-\lfloor \pi^*(K_X+\Delta) \rfloor) \to \omega_X(-\lfloor K_X+\Delta \rfloor) \, \, .
\]
\end{proposition}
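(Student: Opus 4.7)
The plan is to combine the non-vanishing of the raw trace from \autoref{thm.TraceNonzero} with an $\O_X$-linear / $S_2$-extension argument to incorporate the $\bQ$-Cartier divisor. I begin with the $\O_X$-linear non-zero map
$\tr_\pi \: \myH^{\dim Y - \dim X}\myR\pi_*\omega_Y \to \omega_X$
of \eqref{eq.TraceCanonical}. After choosing a rational section of $\omega_X$ (\autoref{rem.HowToEmbedOmegaXKX}), both $\omega_X(-\lfloor K_X + \Delta \rfloor)$ and $\myH^{\dim Y - \dim X}\myR\pi_*\omega_Y(-\lfloor \pi^*(K_X+\Delta)\rfloor)$ sit as subsheaves of their function-field ambient versions, and the task reduces to verifying that the restricted $\tr_\pi$ lands inside the claimed target subsheaf.

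Since $\omega_X(-\lfloor K_X + \Delta \rfloor)$ is $S_2$ as a rank-$1$ reflexive sheaf on the normal integral scheme $X$, it suffices to check this containment after replacing $X$ by the complement of a closed subset of codimension $\geq 2$, or equivalently to check it at every codimension-$1$ point of $X$. At such a point $\eta$, the ring $\O_{X,\eta}$ is a DVR, $K_X + \Delta$ has integer part $K_{X,\eta} + \lfloor b_\eta \rfloor (\eta)$ where $b_\eta$ is the coefficient of $\overline{\{\eta\}}$ in $\Delta$, and the claim becomes a pole-order check on the local form of the trace. To handle the case where $\pi$ is not generically finite (so codimension-$1$ points of $Y$ need not lie over codimension-$1$ points of $X$), I would localize the picture at $\eta$ first and use base-change compatibility of the trace (property (2) in \autoref{sec.Duality}) to reduce the verification to the trace of $\pi_\eta \: Y \times_X \Spec \O_{X,\eta} \to \Spec \O_{X,\eta}$, where the base ring is a DVR.

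Non-vanishing of the constructed map then follows from \autoref{thm.TraceNonzero}: on the open locus where $\Delta$ has integer coefficients, the twisted trace agrees, up to a local trivialization of $\O_X(-\lfloor \Delta \rfloor)$, with the untwisted trace, which is already known to be non-zero.

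For the second assertion, I would repeat the argument starting from the derived trace $\tr_{\pi^\mydot} \: \myR\pi_*\dcx_Y \to \dcx_X$ of \eqref{eq.trace}. The extra Cartier hypothesis on $\pi^*(K_X + \Delta)$ makes $\O_Y(-\lfloor \pi^*(K_X + \Delta)\rfloor) = \O_Y(-\pi^*(K_X + \Delta))$ a genuine line bundle on $Y$, so $\dcx_Y(-\lfloor \pi^*(K_X+\Delta)\rfloor)$ is itself a dualizing complex on $Y$ (up to that line-bundle twist). Taking $\myH^{-\dim X}\myR\pi_*$ of the twisted derived trace and applying the same $S_2$ / codimension-$1$ argument to land in $\omega_X(-\lfloor K_X + \Delta\rfloor)$ produces the desired map, with non-vanishing again reducing to \autoref{thm.TraceNonzero}. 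The main technical obstacle throughout is the codimension-$1$ verification in the source when $\dim Y > \dim X$, which the localization-plus-base-change trick above is designed to sidestep.
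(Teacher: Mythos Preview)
Your approach is essentially the same as the paper's: reduce to the complement of a codimension-$\geq 2$ closed subset, construct the twisted trace there, and then $S_2$-extend the target. The paper's execution is slightly more streamlined, however. Rather than checking pole orders at each codimension-$1$ point and invoking base change to a DVR, the paper simply restricts to the regular locus $i \: U \hookrightarrow X$ (whose complement has codimension $\geq 2$ since $X$ is normal), where $\lfloor K_X + \Delta \rfloor$ is Cartier. Tensoring the untwisted trace on $U$ by the invertible sheaf $\O_U(-\lfloor K_X + \Delta \rfloor)$ via the projection formula, and then using the divisor inequality $-\pi^*\lfloor K_X + \Delta \rfloor \geq -\lfloor \pi^*(K_X + \Delta)\rfloor$ on $\pi^{-1}(U)$, produces the desired map on $U$; applying $i_*$ and using $i_*\omega_U(-\lfloor K_X + \Delta \rfloor) = \omega_X(-\lfloor K_X + \Delta \rfloor)$ finishes the construction. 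This automatically handles the non-generically-finite issue you flagged, since pulling back a Cartier divisor from $U$ makes sense on all of $\pi^{-1}(U)$ regardless of where codimension-$1$ points of $Y$ land. Non-vanishing is argued identically in both (localize at the generic point of $X$ and invoke \autoref{thm.TraceNonzero}), and the second statement is handled in parallel in both treatments.
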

\begin{proof}
The difficulty here lies in that $\lfloor K_X + \Delta \rfloor$ need not be $\Q$-Cartier. To overcome this, let  $U \into[i] X$ be the regular locus of $X$, then we have the trace map
\[
    \myH^{\dim {Y }-\dim X}\myR\pi_*\omega_{\pi^{-1}(U)} \to[\tr_{\pi^{\mydot}}] \omega_U
\]
which is just the restriction of \eqref{eq.TraceCanonical} to $U$.  
Tensoring this map by the invertible sheaf $\O_{U}(- \lfloor (K_X + \Delta) \rfloor)$, we have by the projection formula
\[
 \myH^{\dim {Y}-\dim X} \myR \pi_* \omega_{\pi^{-1}(U)}(-\pi^* \lfloor (K_X + \Delta) \rfloor) \to \omega_U(-\lfloor (K_X + \Delta) \rfloor ) \, \, .
\]
Since $-\pi^* \lfloor K_X + \Delta \rfloor \geq -\lfloor \pi^*(K_X + \Delta) \rfloor$ on $U$ (where again $\lfloor K_{X} + \Delta \rfloor$ is Cartier), we have an induced map
\begin{equation}
\label{eqn.twisted}
 \myH^{\dim Y-\dim X} \myR \pi_* \omega_{\pi^{-1}(U)}(- \lfloor \pi^*(K_X + \Delta) \rfloor) \to \omega_U(-\lfloor (K_X + \Delta) \rfloor ) \, \, .
\end{equation}
Applying $i_*(\blank)$ to \eqref{eqn.twisted} and composing with the restriction map
\[
 \myH^{\dim Y-\dim X} \myR \pi_* \omega_{Y}(- \lfloor \pi^*(K_X + \Delta) \rfloor) \to i_* \myH^{\dim Y-\dim X} \myR \pi_* \omega_{\pi^{-1}(U)}(- \lfloor \pi^*(K_X + \Delta) \rfloor)
\]
now gives the desired first map
\[
    \tr_{\pi} \: \myH^{-\dim X}\myR\pi_*\omega_Y(-\lfloor \pi^*(K_X+\Delta) \rfloor) \to \omega_X(-\lfloor K_X+\Delta\rfloor)
\]
by noting that $i_* \omega_U(-\lfloor (K_X + \Delta) \rfloor ) = \omega_X(- \lfloor (K_X + \Delta) \rfloor)$.  To see that $\tr_{\pi}$ is non-zero, localize to the generic point of $X$ (where $K_X+\Delta$ is trivial) and apply \autoref{thm.TraceNonzero}.

The construction of the second map $\tr_{\pi^{\mydot}}$ is similar, rather starting from \eqref{eq:tracedualizing} in place of \eqref{eq.TraceCanonical}.
Notice that we require that $\pi^*(K_X + \Delta)$ to be Cartier so that we have a means of interpreting $\omega_Y^{\mydot}(-\lfloor \pi^*(K_X+\Delta) \rfloor)$.  Since $\tr_{\pi^{\mydot}}$ agrees with $\tr_{\pi}$ generically by the proof of \autoref{thm.TraceNonzero}, this map is again non-zero.
\end{proof}

In the previous Proposition, we studied trace maps twisted by $\bQ$-divisors.  In the next Lemma, we study a special case of this situation which demonstrates that sometimes this trace map can be re-interpreted as generating a certain module of homomorphisms.

\begin{lemma}
  \label{lem:tracepremultdivisorcomputation}
Let $X$ be a normal integral $F$-finite scheme. Suppose that $\Delta$ is a $\bQ$-divisor such that $(p^{e} - 1)(K_{X} + \Delta) = \Div c$ for some $e > 0$ and $0 \neq c \in K(X)$.  If $\Phi_{X}^{e} \: F^{e} \omega_{X} \to \omega_{X}$ is the trace of the $e$-iterated Frobenius, then the homomorphism $\phi(\blank) = \Phi_{X}^{e}( F^{e}_{*} c \cdot \blank)$ generates $\Hom_{\O_{X}}(F^{e}_{*}\O_{X}(\lceil (p^{e}-1)\Delta \rceil), \O_{X})$ as an $F^{e}_{*}\O_{X}$-module.
\end{lemma}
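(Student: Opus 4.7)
The plan is to identify the Hom module explicitly via Grothendieck--Serre duality for the finite Frobenius $F^e \: X \to X$. First note that since $K_X$ is an integral Weil divisor and $(p^e-1)(K_X+\Delta)=\Div c$ is integral, the divisor $(p^e-1)\Delta$ is itself integral; hence $\lceil (p^e-1)\Delta \rceil = (p^e-1)\Delta$ and the ceiling can be dropped throughout the argument.

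The core step is to produce an $F^e_*\O_X$-linear isomorphism
\[
\sHom_{\O_X}\bigl(F^e_*\O_X((p^e-1)\Delta),\,\O_X\bigr) \;\cong\; F^e_*\O_X(-\Div c).
\]
Applying duality for the finite map $F^e$ rewrites the left-hand side as $F^e_*\sHom_{\O_X}(\O_X((p^e-1)\Delta),F^{e!}\O_X)$, so the content is a reflexive identification $F^{e!}\O_X \cong \O_X((1-p^e)K_X)$. This is the standard calculation $F^{e!}\O_U = F^{e!}\omega_U \otimes F^{e*}\omega_U^{-1} = \omega_U^{1-p^e}$ on the regular locus $U \subseteq X$, where $\omega_U$ is invertible; since both sides are $S_2$ coherent sheaves and $X \setminus U$ has codimension at least two by normality, the identification extends over $X$. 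Substituting then yields $F^e_*\O_X\bigl((1-p^e)K_X - (p^e-1)\Delta\bigr) = F^e_*\O_X(-\Div c)$.

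Next I would verify that under this identification $\phi$ corresponds to the distinguished section $c \in F^e_*\O_X(-\Div c)$. Unwinding the adjunction, a section $s \in F^e_*\O_X(-\Div c)$, viewed as the multiplication map $\O_X((p^e-1)\Delta) \to \O_X((1-p^e)K_X) \cong F^{e!}\O_X$, is sent to the composition of $F^e_*(s \cdot \blank)$ with the counit/Grothendieck trace $F^e_* F^{e!}\O_X \to \O_X$; under the reflexive identification just made, this counit is exactly $\Phi_X^e$ tensored reflexively with $\omega_X^{-1}$, so taking $s = c$ reproduces the defining formula for $\phi$. Finally, since $\O_X(-\Div c) = c\cdot\O_X$ inside $K(X)$, multiplication by $c$ is an $\O_X$-linear isomorphism $\O_X \xrightarrow{\sim} \O_X(-\Div c)$ exhibiting $c$ as a generator, and transporting this through the duality shows that $\phi$ generates the Hom as an $F^e_*\O_X$-module.

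The chief technical obstacle is the reflexive bookkeeping surrounding $F^{e!}\O_X$ away from the regular locus, since $\omega_X$ need not be invertible there. The clean resolution is the restrict-then-extend strategy sketched above: perform all line-bundle manipulations on the regular locus $U$ and extend across $X\setminus U$ using $S_2$-ness of the pushforwards $F^e_*\O_X(D)$ for Weil divisors $D$. Once this is in hand, no further work is required beyond tracking the distinguished element $c$ through the isomorphism.
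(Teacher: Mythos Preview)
Your proposal is correct and follows essentially the same route as the paper: both arguments identify the Hom module via Grothendieck duality for the finite Frobenius to get $\sHom_{\O_X}(F^e_*\O_X((p^e-1)\Delta),\O_X)\cong F^e_*\O_X((1-p^e)K_X-(p^e-1)\Delta)$, then observe that precomposing the trace generator $\Phi_X^e$ with multiplication by $c$ corresponds to the generator under this identification. You are simply more explicit about the reflexive/$S_2$ bookkeeping on the singular locus, which the paper folds into the phrase ``essentially by construction.''
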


\begin{proof}
  Essentially by construction (and the definition of $\omega_{X}^{\mydot}$), we have that $\Phi_{X}^{e}$ generates $\Hom_{\O_{X}}(F^{e}_{*}\omega_{X},\omega_{X})$ as an $F^{e}_{*}\O_{X}$-module.  
  Using the identification $\omega_{X} = \O_{X}(K_{X})$ we may consider $\Phi_{X}^e$ to generate
\[
\Hom_{\O_{X}}(F^{e}_{*}\O_{X}((1-p^{e})K_{X}),\O_{X}) = \Hom_{\O_{X}}(F^{e}_{*}\omega_{X},\omega_{X}) \, \, .
\]
But then multiplication by $c$ induces an isomorphism $\O_{X}((p^{e}-1)\Delta) \to[\cdot c] \O_{X}((1-p^{e})K_{X})$ (note $(p^{e}-1)\Delta$ is integral), so that $\Phi_{X}(F^{e}_{*}c \cdot \blank)$ generates $\Hom_{\O_{X}}(F^{e}_{*}\O_{X}(\lceil (p^{e}-1)\Delta \rceil), \O_{X})$ as an $F^{e}_{*}\O_{X}$-module as well.
\end{proof}

A main technique in this paper is the observation that the images of the various trace maps $\tr_\pi$ are preserved under the trace of the Frobenius. We will show this now for $\tr_\pi\: \myH^{\dim Y-\dim X} R\pi_*\omega_Y \to \omega_X$.  We will obtain a partial generalization involving $\bQ$-divisors within the proof of \autoref{prop.EasyContainmentViaGeneralMaps}.
\begin{proposition}\label{prop.EasyContainmentgeneral}
If $\pi \: Y \to X$ is a proper dominant map of integral schemes, the image of the trace map
\[
J_\pi \colonequals \tr_\pi(\myH^{\dim Y-\dim X}\myR\pi_*\omega_Y) \subseteq \omega_X
\]
satisfies $\Phi_{X}(F_*J_\pi) \subseteq J_\pi$.
\end{proposition}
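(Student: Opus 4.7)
The idea is to mimic the proof of \autoref{prop.EasyContainmentBase} but work at the level of dualizing complexes and then pass to cohomology. The key inputs are the compatibility of the trace with composition (property (1) in the previous subsection) and the fact that the absolute Frobenius $F$ is a finite, and in particular affine, morphism, so $F_* = \myR F_*$ commutes with taking cohomology.

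First, I would form the commutative square of Frobenii
\[
\xymatrix{
Y \ar[d]_{\pi} \ar[r]^{F} & Y \ar[d]^{\pi}\\
X \ar[r]_F & X
}
\]
and apply the composition rule for $\tr_{(-)^{\mydot}}$ to both factorizations of the composite $F \circ \pi = \pi \circ F$. This yields a commutative diagram
\[
\xymatrix@C=45pt{
F_* \myR\pi_* \dcx_Y \ar[d]_{F_*\tr_{\pi^{\mydot}}} \ar[r]^{\myR\pi_* \Phi_Y^{\mydot}} & \myR\pi_* \dcx_Y \ar[d]^{\tr_{\pi^{\mydot}}}\\
F_* \dcx_X \ar[r]_{\Phi_X^{\mydot}} & \dcx_X
}
\]
in $D^b_{\coherent}(X)$, where $\Phi_Y^{\mydot} = \tr_{F^{\mydot}}$ on $Y$ and $\Phi_X^{\mydot} = \tr_{F^{\mydot}}$ on $X$. (Here I use our standing assumption $F^!\dcx_S \cong \dcx_S$ so that $\Phi_Y^{\mydot}$ and $\Phi_X^{\mydot}$ are genuine endomorphisms of the respective dualizing complexes.)

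Next I would take $\myH^{-\dim X}$ of this diagram. Because $F$ is finite, $F_*$ is exact, so it commutes with cohomology; the right-hand vertical map becomes $\tr_{\pi^{\mydot}}\: \myH^{-\dim X}\myR\pi_*\dcx_Y \to \omega_X$ of \eqref{eq:tracedualizing}, and the bottom map becomes $\Phi_X \: F_*\omega_X \to \omega_X$. To pass to $\omega_Y$, note that the inclusion $\omega_Y[\dim Y] \into \dcx_Y$ induces a factorization of $\tr_\pi$ as in \eqref{eq.TraceCanonical}, and that $\Phi_Y^{\mydot}$ restricts to $\Phi_Y$ on the first non-zero cohomology. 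Combining these observations produces the desired commutative square
\[
\xymatrix@C=50pt{
F_* \myH^{\dim Y-\dim X}\myR\pi_*\omega_Y \ar[d]_{F_*\tr_{\pi}} \ar[r]^{\ \myH^{\dim Y - \dim X}\myR\pi_*\Phi_Y\ } & \myH^{\dim Y-\dim X}\myR\pi_*\omega_Y \ar[d]^{\tr_{\pi}}\\
F_* \omega_X \ar[r]_{\Phi_X} & \omega_X.
}
\]
A diagram chase now gives $\Phi_X(F_*J_\pi) = \Phi_X(F_*\tr_\pi(-)) = \tr_\pi\bigl(\myH^{\dim Y - \dim X}\myR\pi_*\Phi_Y(F_*(-))\bigr) \subseteq \tr_\pi(\myH^{\dim Y-\dim X}\myR\pi_*\omega_Y) = J_\pi$, as desired.

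The main obstacle is justifying the commutativity of the first diagram cleanly: one has to invoke that the trace on dualizing complexes is functorial under composition both when we think of $F \circ \pi$ as "first $\pi$ then $F_X$" and as "first $F_Y$ then $\pi$", and verify that both resulting factorizations of $\tr_{(F\circ\pi)^{\mydot}}$ agree with the two sides of the square above. Everything else---exactness of $F_*$, the factorization of $\tr_\pi$ through $\dcx_Y$, and the compatibility of $\Phi_Y^{\mydot}$ with $\Phi_Y$---is formal from \autoref{sec.Duality} and \autoref{ex:TraceForFrobeniusGeneral}.
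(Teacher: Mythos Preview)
Your proposal is correct and follows essentially the same approach as the paper. The paper's proof differs only in how it justifies the commutativity you flag as the ``main obstacle'': rather than invoking the composition rule for $\tr_{(-)^{\mydot}}$ abstractly, it writes down the commutative square of structure sheaves $\O_X \to \myR\pi_*\O_Y \to F_*\myR\pi_*\O_Y$ (in both orders) and applies Grothendieck--Serre duality to obtain your first diagram directly---this is precisely the content of the compatibility you cite, so the arguments are the same.
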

\begin{proof}
Since Frobenius commutes with any map, we get the following diagram for which we consider the corresponding commutative diagram of structure sheaves
\[\xymatrixcolsep{3pc}\xymatrix{
    Y \ar[r]^\pi & X && \myR\pi_*\O_Y\ar[d]_{\myR\pi_*F} & \O_X \ar[l]_-\pi\ar[d]^F \\
    Y \ar[r]^\pi\ar[u]^F & X\ar[u]_F && F_*\myR\pi_*\O_Y & F_*\O_X \ar[l]_-{F_*\pi}
} \, \, .
\]
Applying duality now gives the following commutative diagram of trace maps
\begin{equation}\label{eq.traceFrob}
\xymatrixcolsep{5pc}
\xymatrix{
\myR\pi_*\dcx_Y\ar[r]^-{\tr_{\pi^{\mydot}}} & \dcx_X \\
F_*\myR\pi_*\dcx_Y\ar[u]^-{\myR\pi_*\tr_{F^{\mydot}}}\ar[r]^{F_*\Tr_{\pi^{\mydot}}} & F_*\dcx_X\ar[u]_{\tr_{F^{\mydot}}}
} \, \, .
\end{equation}
Taking the ($-\dim X$)-th cohomology and composing with the inclusion $\omega_Y[\dim Y] \to \dcx_Y$ on the left, we get a diagram
\begin{equation}
\label{eq:ultimatetracecommutes}
\xymatrix{
\myH^{\dim Y-\dim X}\myR\pi_*\omega_Y\ar[rr] && \myH^{-\dim X}\myR\pi_*\dcx_Y\ar[rr]^-{\tr_{\pi^{\mydot}}} && \omega_X \\
\myH^{\dim Y-\dim X}F_*\myR\pi_*\omega_Y\ar[u]^{\myH^{\dim Y - \dim X}\myR \pi_{*} \Phi_{Y}}\ar[rr] && \myH^{-\dim X}F_*\myR\pi_*\dcx_Y\ar[u] \ar[rr]^-{F_*\Tr_{\pi^{\mydot}}} && F_*\omega_X\ar[u]_{\Phi_{X}}
}
\end{equation}
where the left vertical map exists since $F$ is finite and hence $F_*$ is exact. The horizontal composition on the top is $\tr_\pi$ and the image $\tr_\pi(\myH^{\dim Y-\dim X}R\pi_*\omega_Y)$ is $J_\pi \subseteq \omega_X$. By the exactness of $F_*$, the horizontal composition on the bottom is $F_{*}\tr_{\pi}$ and we get that $F_*J_\pi$ is the image, and the result now follows.
\end{proof}

\subsection{Multiplier ideals and pseudo-rationality}\label{sec.MultiplierRational}

\begin{definition} \cite{LipmanTeissierPseudoRational}\label{def.pseudo-rational}
We say that a reduced connected scheme $X$ is \emph{pseudo-rational} if
\begin{enumerate}
\item $X$ is Cohen-Macaulay, and
\item \label{def.pseudo-rational.b} $\pi_* \omega_Y = \omega_X$ for every proper birational map $\pi : Y \to X$.
\end{enumerate}
Furthermore, if there exists a resolution of singularities $\pi : Y \to X$, then it is sufficient to check \autoref{def.pseudo-rational.b} for this one map $\pi$.
\end{definition}

If $X$ is of characteristic zero, this coincides via Grauert-Riemenschneider vanishing \cite{GRVanishing} with the classical definition of rational singularities, meaning there exists a resolution of singularities $\pi : Y \to X$ such that $\O_X \cong \pi_* \O_{Y}$ and $\myH^i \myR \pi_* \O_{Y} = 0$ for all $i$.  In positive or mixed characteristic, it is a distinct notion.

\begin{remark}
It was remarked in \cite{GRVanishing} that if $\pi : Y \to X$ is a resolution of singularities in characteristic zero, then the subsheaf $\pi_* \omega_Y \subseteq \omega_X$ is independent of the choice of resolution of singularities.  This subsheaf should be viewed as an early version of the multiplier ideal. Compare with the definition of the multiplier module below and the parameter test submodule in \autoref{def.ParamTestSubmodule}.
\end{remark}

Going back to ideas in K. Smith's thesis and \cite{SmithTestIdeals}, the natural object to deal with rational singularities of pairs is the multiplier module, \textit{cf.} \cite{BlickleMultiplierIdealsAndModulesOnToric,SchwedeTakagiRationalPairs}.

\begin{definition}\label{def.multmod}
Given a pair $(X,\Gamma)$ with $\Gamma$ a $\bQ$-Cartier $\bQ$-divisor, then the $\emph{multiplier module}$ is defined as
\[
    \mJ(\omega_X; \Gamma) \colonequals \bigcap_{\pi\: Y \to X} \pi_*\omega_Y(\roundup{-\pi^*\Gamma})
\]
where $\pi$ ranges over all proper birational maps with normal $Y$.
\end{definition}

Note that from this definition it is not clear that $\mJ(\omega_X; \Gamma)$ is even quasi-coherent, as the infinite intersection of coherent subsheaves need not be quasi-coherent in general. However, if there is a theory of resolution of singularities available (for example over a field of characteristic zero \cite{HironakaResolution}, or in dimension $\leq 2$ \cite{LipmanResInDim2}), it is straightforward to check coherence by showing that the above intersection stabilizes.
 Recall that a \emph{log resolution} of the pair $(X, \Gamma)$ is a proper birational map $\pi : Y \to X$ with $Y$ regular and exceptional set $E$ of pure codimension one such that $\Supp(E) \cup \pi^{-1}(\Supp (\Gamma))$ is a simple normal crossings divisor.
Assuming every normal proper birational modification can be dominated by a log resolution, one can in fact show
\[
\mJ(\omega_X; \Gamma) =  \pi_* \omega_Y(\roundup{-\pi^*\Gamma} )
\]
for any single log resolution $\pi \: Y \to X$, which is in particular coherent.
Note that, for effective $\Gamma$ it is a subsheaf of $\omega_X$ via the natural inclusion $\pi_*\omega_Y \subseteq \omega_X$ as in \autoref{ex:tracebirational}.

Immediately from this definition it follows that $X$ is pseudo-rational if and only if $X$ is Cohen-Macaulay and $\mJ(\omega_X)\colonequals \mJ(\omega_X;0) = \omega_X$. Hence one defines:

\begin{definition}[\cite{SchwedeTakagiRationalPairs}]
A pair $(X,\Gamma)$ with $\Gamma \geq 0$ a $\bQ$-Cartier $\bQ$-divisor is called \emph{pseudo-rational} if $X$ is Cohen-Macaulay and $\mJ(\omega_X;\Gamma)=\omega_X$. Note that this implies that $\lfloor \Gamma \rfloor = 0$.
\end{definition}

The classical notion is of course that of multiplier ideals, which have been defined primarily in characteristic zero.  See \cite{LazarsfeldPositivity2} for a complete treatment in this setting. Historically, while multiplier ideals first appeared in more analytic contexts and were originally defined using integrability conditions, one facet of their pre-history was defined for any normal integral scheme -- J. Lipman's adjoint ideals \cite{LipmanAdjointsOfIdeals}.  The definition we give here (which makes sense in arbitrary characteristic) is a slight generalization of Lipman's definition to the modern setting of pairs.
\renewcommand{\J}{\mathcal{J}}

\begin{definition}[\cite{LazarsfeldPositivity2, LipmanAdjointsOfIdeals}]
\label{def:multiplierideals}
Given a log-$\bQ$-Gorenstein pair $(X, \Delta)$ then the \emph{multiplier ideal} is defined as
\[
\mJ(X; \Delta) \colonequals \bigcap_{\pi : Y \to X} \pi_* \O_Y(\roundup{K_Y - \pi^*(K_X + \Delta) })
\]
where $\pi$ ranges over all proper birational maps with normal $Y$ and, for each individual $\pi$, we have that $K_X$ and $K_Y$ agree wherever $\pi$ is an isomorphism.
\end{definition}
As above, in a non-local setting, for $\mJ(X;\Delta)$ to be quasi-coherent one needs a good theory of resolution of singularities. In this situation,
\[
\mJ(X; \Delta) =  \pi_* \O_Y(\roundup{K_Y - \pi^*(K_X + \Delta)} )
\]
for any log resolution $\pi \: Y \to X$ of the pair $(X,\Delta)$.  In general, $\J(X;\Delta)$ depends heavily on $\Delta$ and not simply the corresponding linear or $\Q$-linear equivalence class; a similar observation holds for the multiplier module as well.

If $(X, \Delta)$ is a pair, strictly speaking the object $\J(\omega_{X};K_{X} + \Delta)$ is ambiguous as $K_{X}$ is not uniquely determined (and represents a linear equivalence class of divisors).  Nonetheless, for each choice of $K_{X}$ we have that $\J(\omega_{X} ; K_{X} + \Delta) \subseteq \omega_{X}(-\lfloor K_{X} + \Delta\rfloor)$, and is thereby identified with a submodule of $\O_{X}(- \lfloor \Delta \rfloor)$ using Convention~\ref{rem.HowToEmbedOmegaXKX}.  This construction is in fact independent of the choice of $K_{X}$, and allows one to relate
multiplier ideals and multiplier modules in general.

\begin{lemma}
If $(X,\Delta)$ is a log-$\bQ$-Gorenstein pair, then $\mJ(\omega_X; (K_X + \Delta)) = \mJ(X; \Delta)$.
\end{lemma}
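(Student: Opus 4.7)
The plan is to observe that, after choosing a common rational section of the canonical sheaf to set up the identifications of Convention~\ref{rem.HowToEmbedOmegaXKX}, the two intersections in the definitions of $\mJ(\omega_X; K_X+\Delta)$ and $\mJ(X;\Delta)$ run over the \emph{same} sheaves indexed by the same family of birational maps.

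More concretely, I would first fix a nonzero rational section $s$ of $\omega_X$ and let $K_X = \Div s$; by Convention~\ref{rem.HowToEmbedOmegaXKX} this gives an embedding $\omega_X \hookrightarrow K(X)$ whose restriction to $\omega_X(-\lfloor K_X+\Delta\rfloor)$ has image $\O_X(-\lfloor\Delta\rfloor)$. For each proper birational $\pi\:Y\to X$ with $Y$ normal, pulling $s$ back (or equivalently viewing it as a rational section of $\omega_Y$ via $K(Y)=K(X)$) produces a canonical divisor $K_Y := \Div(\pi^*s)$ on $Y$ that agrees with $K_X$ wherever $\pi$ is an isomorphism, exactly as required in Definition~\ref{def:multiplierideals}. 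This simultaneously fixes an embedding $\omega_Y \hookrightarrow K(Y)=K(X)$ compatible with $\pi_*$.

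Next I would carry out the tautological identification on each $Y$: multiplying by a local generator of $\omega_Y$ (equivalently, using $s$) turns
\[
\omega_Y\bigl(\lceil -\pi^*(K_X+\Delta)\rceil\bigr) \;=\; \O_Y\bigl(K_Y\bigr)\bigl(\lceil -\pi^*(K_X+\Delta)\rceil\bigr)
\]
into $\O_Y\bigl(\lceil K_Y - \pi^*(K_X+\Delta)\rceil\bigr)$ as a subsheaf of $K(X)$. Pushing forward by $\pi$, the sheaf occurring in the $\pi$-th term of the definition of $\mJ(\omega_X;K_X+\Delta)$ matches the sheaf occurring in the $\pi$-th term of the definition of $\mJ(X;\Delta)$, both regarded as subsheaves of the constant sheaf $K(X)$. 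Intersecting over all $\pi$ then yields the same subsheaf of $K(X)$, which under Convention~\ref{rem.HowToEmbedOmegaXKX} sits inside $\O_X(-\lfloor\Delta\rfloor)$.

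The only real subtlety to attend to is that the identification $\omega_X\hookrightarrow K(X)$ (and hence $\omega_Y \hookrightarrow K(X)$) depends on the choice of $s$ only up to multiplication by a unit of $\O_X$, which changes $K_X$ by a principal divisor and alters both intersected sheaves by the same principal factor; so the resulting subsheaves of $K(X)$ are canonical and agree as claimed. This is essentially the only point requiring care, since the equality of the individual sheaves on each $Y$ is purely formal once $K_X, K_Y$ are chosen compatibly.
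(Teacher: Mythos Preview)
Your proposal is correct and follows essentially the same approach as the paper's proof: both fix compatible canonical divisors $K_X$ and $K_Y$ via Convention~\ref{rem.HowToEmbedOmegaXKX}, then observe that under the resulting identifications the term $\pi_*\omega_Y(-\lfloor\pi^*(K_X+\Delta)\rfloor)$ in the multiplier-module intersection equals the term $\pi_*\O_Y(\lceil K_Y - \pi^*(K_X+\Delta)\rceil)$ in the multiplier-ideal intersection, so the two intersections agree. Your version is simply more explicit about the role of the section $s$ and the well-definedness under change of $s$, which the paper leaves implicit.
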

\begin{proof}
Suppose $\pi \: Y \to X$ is a proper birational map, $Y$ is normal, and $K_{Y}$ and $K_{X}$ agree wherever $\pi$ is an isomorphism.  Making full use of Convention~\ref{rem.HowToEmbedOmegaXKX}, we have
\[
\begin{array}{ccc}
  \omega_{X}( - \lfloor K_{X} + \Delta \rfloor ) & = & \O_{X}(-\lfloor \Delta \rfloor) \medskip \\
\rotatebox{90}{$\subseteq$} & & \rotatebox{90}{$\subseteq$} \\
\pi_{*} \omega_{Y}( - \lfloor \pi^{*}(K_{X} + \Delta) \rfloor) & = & \pi_{*} \O_{Y}(\lceil K_{Y} - \pi^{*}(K_{X}+ \Delta) \rceil)
\end{array}
\]
and the desired conclusion now follows immediately from the definitions.
\end{proof}

\begin{definition}
A log-$\bQ$-Gorenstein pair $(X, \Delta)$ with $\Delta \geq 0$ effective is called \emph{Kawamata log terminal} if $\mJ(X; \Delta) = \O_X$.
\end{definition}

\subsection{The parameter test submodule and $F$-rationality}
We now turn to the characteristic $p>0$ notion of $F$-rationality, extensively studied in \cite{FedderWatanabe} and \cite{SmithFRatImpliesRat}, which is central to our investigations.
\begin{definition}
Suppose that $X$ is reduced, connected, and $F$-finite (and satisfies Convention~\ref{rem.FUpperShrieckConvenient}).  We say that $X$ is \emph{$F$-rational} if
\begin{enumerate}
\item  $X$ is Cohen-Macaulay.
\item  There is no proper submodule $M \subseteq \omega_X$, non-zero on every irreducible component of $X$ where $\omega_{X}$ is non-zero, such that $\Phi_{X}(F_* M) \subseteq M$ where $\Phi_{X} \: F_* \omega_X \to \omega_X$ is the trace of Frobenius as in \autoref{ex:TraceForFrobenius}.
\end{enumerate}
\end{definition}
\begin{remark}
While the preceding characterization of $F$-rationality differs from
the definition used historically throughout the literature, it is nonetheless readily seen to be equivalent.  Indeed, when $X = \Spec R$ for a local ring $R$ with maximal ideal $\bm$, the characterization of the tight closure of zero in $H^{\dim R}_{\bm}(R)$  found in \cite{SmithFRatImpliesRat} implies that condition~(b) is Matlis dual to the statement
$0^*_{H^{\dim R}_{\bm}(R)} = 0$.
\end{remark}

\begin{definition}[\cite{SmithTestIdeals, BlickleMultiplierIdealsAndModulesOnToric, SchwedeTakagiRationalPairs}] \label{def.ParTestSub}
\label{def.ParamTestSubmodule}
Suppose that $X$ is reduced, connected, and \mbox{$F$-finite} (and satisfies Convention~\ref{rem.FUpperShrieckConvenient}).
The \emph{parameter test submodule} $\tau(\omega_{X})$ is the unique smallest subsheaf $M$ of $\omega_X$, non-zero on every irreducible component of $X$ where $\omega_X$ is nonzero, such that $\Phi_{X}(F_* M) \subseteq M$ where $\Phi_{X} : F_* \omega_{X} \to \omega_{X}$ is trace of Frobenius.

For $X$ normal and integral, the \emph{parameter test submodule} $\tau(\omega_{X};\Gamma)$ of a pair $(X, \Gamma)$ with $\Gamma \geq 0$ is the unique smallest non-zero subsheaf $M$ of $\omega_X$ such that $\phi(F^e_* M) \subseteq M$ for all \emph{local} sections $\phi \in \sHom_{\O_{X}}(F^e_* \omega_X( \lceil (p^e - 1)\Gamma \rceil), \omega_X)$ and all $e > 0$, noting that $\omega_X \subseteq \omega_X( \lceil (p^e - 1)\Gamma \rceil)$.  The further observation $\phi(F_{*}\omega_{X}(\lceil -\Gamma \rceil)) \subseteq \omega_{X}(\lceil -\Gamma \rceil)$ gives that $\tau(\omega_{X};\Gamma) \subseteq \omega_{X}(\lceil - \Gamma \rceil)$.
\end{definition}

Once again, the preceding definition is separate from but equivalent to that which is commonly used throughout the literature.  Moreover, standard arguments on the existence of test elements are required to show the (non-obvious) fact that $\tau(\omega_{X})$ and $\tau(\omega_{X};\Gamma)$ (as above) are well-defined. See, for example, \cite[Proposition 3.21]{SchwedeTestIdealsInNonQGor} (\cf \cite[Lemma 2.17]{SchwedeCentersOfFPurity}), or more generally \cite{BlickleBoeckleCartierModulesFiniteness, BlickleTestIdealsViaAlgebras}.

\begin{lemma}
Suppose that $X$ is reduced, connected, and \mbox{$F$-finite}.  Then $X$ is $F$-rational if and only if it is Cohen-Macaulay and $\tau(\omega_X) = \omega_X$. Furthermore, any $F$-rational scheme is normal.
\end{lemma}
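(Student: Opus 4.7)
My plan is to handle the equivalence by unwinding definitions, and then establish normality by combining surjectivity of the trace from the normalization with a local cohomology argument at codimension-one points.

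For the equivalence, I would simply observe that $\tau(\omega_X)$ is, by definition, the smallest $\Phi_X$-stable subsheaf of $\omega_X$ non-zero on every component where $\omega_X \neq 0$. Then ``no proper $\Phi_X$-stable submodule with this non-vanishing property exists'' is logically equivalent to ``the smallest such submodule is $\omega_X$''; both implications are immediate.

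For normality, since $X$ is Cohen-Macaulay (hence $S_2$), by Serre's criterion I only need to establish that $X$ is $R_1$. Let $\nu\:X^\nu \to X$ denote the normalization, which is finite and birational. By \autoref{prop.EasyContainmentgeneral} the image $J_\nu \colonequals \tr_\nu(\nu_*\omega_{X^\nu}) \subseteq \omega_X$ is $\Phi_X$-stable, and since $\nu$ is an isomorphism at each generic point of $X$, this image is non-zero on every irreducible component where $\omega_X$ is non-zero. The minimality of $\tau(\omega_X)$ then forces $\omega_X = \tau(\omega_X) \subseteq J_\nu \subseteq \omega_X$, so $J_\nu = \omega_X$ and the trace map $\nu_*\omega_{X^\nu} \twoheadrightarrow \omega_X$ is surjective.

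The hard part will be converting this surjective trace into genuine normality. I would localize at an arbitrary codimension-one point $\mathfrak{p} \in X$ to obtain a surjection $\omega_{R^\nu_\mathfrak{p}} \twoheadrightarrow \omega_{R_\mathfrak{p}}$, where $R_\mathfrak{p}$ is a $1$-dimensional local Cohen-Macaulay reduced ring and $R^\nu_\mathfrak{p}$ denotes $R^\nu \otimes_R R_\mathfrak{p}$. By local duality applied to $R_\mathfrak{p}$, this dualizes to an injection $H^1_{\mathfrak{p} R_\mathfrak{p}}(R_\mathfrak{p}) \hookrightarrow H^1_{\mathfrak{p} R_\mathfrak{p}}(R^\nu_\mathfrak{p})$. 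On the other hand, $R^\nu_\mathfrak{p}/R_\mathfrak{p}$ is a module of dimension zero, so the long exact sequence in local cohomology (using that positive depth ensures $H^0$ vanishes on both $R_\mathfrak{p}$ and $R^\nu_\mathfrak{p}$) produces
\[
0 \to R^\nu_\mathfrak{p}/R_\mathfrak{p} \to H^1_{\mathfrak{p} R_\mathfrak{p}}(R_\mathfrak{p}) \to H^1_{\mathfrak{p} R_\mathfrak{p}}(R^\nu_\mathfrak{p}) \to 0 \, \, .
\]
Comparing with the injection forces $R^\nu_\mathfrak{p}/R_\mathfrak{p} = 0$, so $R_\mathfrak{p}$ is a DVR. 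Hence $\nu$ is an isomorphism in codimension one, and since $\O_X$ and $\nu_*\O_{X^\nu}$ are both $S_2$ sheaves agreeing outside a codimension-two set, this extends to a global isomorphism, proving that $X$ is normal.
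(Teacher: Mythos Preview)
Your proof is correct and follows essentially the same strategy as the paper: both arguments reduce to showing that the trace map $\nu_*\omega_{X^\nu}\to\omega_X$ from the normalization is surjective (using that its image is $\Phi_X$-stable and generically nonzero, together with $\tau(\omega_X)=\omega_X$), and then use this to verify $R_1$ after localizing to dimension one.

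The only difference lies in the endgame. After localizing to a one-dimensional Cohen--Macaulay local ring $R$, the paper observes directly that $i^\vee\colon\omega_{R^{\textnormal{N}}}\to\omega_R$ is also \emph{injective} (because $i$ is birational, so $i^\vee$ is generically an isomorphism between torsion-free modules); combined with surjectivity this makes $i^\vee$ an isomorphism, and dualizing back gives $i\colon R\to R^{\textnormal{N}}$ an isomorphism. You instead Matlis-dualize the surjection to obtain injectivity of $H^1_{\mathfrak{p}}(R_{\mathfrak{p}})\hookrightarrow H^1_{\mathfrak{p}}(R^\nu_{\mathfrak{p}})$ and then read off $R^\nu_{\mathfrak{p}}/R_{\mathfrak{p}}=0$ from the local-cohomology long exact sequence. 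Both routes are valid; the paper's is one step shorter since it avoids the passage through local cohomology, while yours has the mild advantage of establishing the trace surjectivity once globally (via the already-proven \autoref{prop.EasyContainmentgeneral}) rather than rebuilding the Frobenius commutative square in the localized setting. One small caveat: \autoref{prop.EasyContainmentgeneral} is stated for integral schemes, whereas here $X$ is only assumed reduced and connected, so $X^\nu$ may be disconnected; the argument of that proposition still goes through verbatim, but you might note this.
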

\begin{proof}
The first statement is an immediate consequence of the definitions, so we need only show the second.
 Without loss of generality, we may assume that $X=\Spec R$ where $R$ is an $F$-rational local ring.  If $R^{\textnormal{N}}$ is the normalization of $R$, we will show the inclusion map $i : R \to R^{\textnormal{N}}$ is an isomorphism. As $R$ is already assumed Cohen-Macaulay it is $\textnormal{S}_2$, and so by Serre's criterion for normality we simply need to check that $R$ is regular in codimension $1$.  Thus, by localizing we may assume that $R$ is one dimensional (and thus so is $R^{\textnormal{N}}$, which now must be regular).
Consider the following commutative diagram of rings together with its corresponding Grothendieck-Serre dual (all rings in question are Cohen-Macaulay)
\[
\xymatrix{
R^{\textnormal{N}} \ar[r]^-{F} & F_*( R^{\textnormal{N}})  \\
R \ar[u]^{i} \ar[r]_-{F} & F_* R \ar[u]_{F_* i}
}  \qquad \qquad \xymatrix{
\omega_{R^{\textnormal{N}}}\ar[d]_{i^{\vee}} & \ar[l]_-{\Phi_{R^{\textnormal{N}}}} F_* \omega_{R^{\textnormal{N}}} \ar[d]^{F_* (i^{\vee})} \\
\omega_{R} & \ar[l]^{\Phi_{R}} F_* \omega_{R}
} \, \, .
\]
As in \autoref{ex.traceForFinite}, $i^{\vee}$ is identified with the evaluation-at-1 map $\Hom_R(R^{\textnormal{N}}, \omega_{R}) \to \omega_{R}$.  Since $i$ is birational, it is easy to see $i^{\vee}$ is injective.  In particular,
$i^{\vee}$ is non-zero and thus also surjective by the definition of $F$-rationality. It follows that $i^{\vee}$ and hence $i$ are isomorphisms, whence $R$ is normal as desired.
\end{proof}
In order to consider pairs $(X,\Delta)$ with $\Delta$ not necessarily effective, we need to recall the following lemma.

\begin{lemma}\cite[Proposition 7.10(3)]{SchwedeTakagiRationalPairs}\cite[Lemma 6.12]{SchwedeTuckerTestIdealFiniteMaps}
\label{lem.PullOutCartierForTestSubmod}
 Suppose that $(X, \Delta)$ is a pair with $\Delta \geq 0$, and that $D \geq 0$ an integral Cartier divisor on $X = \Spec R$, then $\tau(\omega_X; \Delta + D) = \tau(\omega_X; \Delta) \tensor \O_X(-D)$.
\end{lemma}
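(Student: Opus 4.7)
Since the statement is local on $X$, I first reduce to the case that $D = \Div(f)$ is principal for some $f \in R$, so that $\O_X(-D) = fR$ and the claimed equality becomes $\tau(\omega_X; \Delta + D) = f \cdot \tau(\omega_X; \Delta)$ (as submodules of $\omega_X$). Note that by the final sentence of \autoref{def.ParamTestSubmodule}, we have $\tau(\omega_X; \Delta + D) \subseteq \omega_X(\lceil -(\Delta + D)\rceil) = \omega_X(-D + \lceil -\Delta \rceil) \subseteq f \omega_X$, so $f^{-1}\tau(\omega_X; \Delta + D)$ is a well-defined submodule of $\omega_X$.

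The key device is a bijection
\[
\Phi \: \sHom_{\O_X}\!\big(F^e_* \omega_X(\lceil (p^e-1)\Delta \rceil), \omega_X\big) \longrightarrow \sHom_{\O_X}\!\big(F^e_* \omega_X(\lceil (p^e-1)(\Delta + D) \rceil), \omega_X\big)
\]
defined by $\Phi(\phi)(F^e_* m) := \phi(F^e_*(f^{p^e-1} m))$. This is well-defined because $f^{p^e-1}$ vanishes to order exactly $p^e - 1$ along $D$, carrying $\omega_X(\lceil (p^e-1)\Delta \rceil + (p^e-1)D)$ isomorphically onto $\omega_X(\lceil (p^e-1)\Delta \rceil)$; it is $\O_X$-linear because multiplication by $r^{p^e}$ before applying $F^e_*$ corresponds to multiplication by $r$ afterward; and its inverse is $\psi \mapsto \big( F^e_* n \mapsto \psi(F^e_*(n/f^{p^e-1}))\big)$.

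Next I use $\Phi$ to transfer the stability conditions back and forth. For the inclusion $\tau(\omega_X; \Delta + D) \subseteq f \tau(\omega_X; \Delta)$, take $m \in \tau(\omega_X; \Delta)$, $\psi = \Phi(\phi)$, and compute
\[
\psi(F^e_*(fm)) = \phi(F^e_*(f^{p^e} m)) = f \cdot \phi(F^e_* m) \in f \cdot \tau(\omega_X; \Delta),
\]
using $\O_X$-linearity and the stability of $\tau(\omega_X; \Delta)$ under $\phi$. Hence $f\tau(\omega_X; \Delta)$ is a nonzero submodule stable under every $\psi$ arising for $\Delta + D$, so minimality of $\tau(\omega_X; \Delta + D)$ gives the containment. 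For the reverse inclusion $f\tau(\omega_X; \Delta) \subseteq \tau(\omega_X; \Delta + D)$, I show symmetrically that $f^{-1}\tau(\omega_X; \Delta + D)$ is stable under every $\phi$ for $\Delta$: if $fn \in \tau(\omega_X; \Delta + D)$, then
\[
f \cdot \phi(F^e_* n) = \phi(F^e_*(f^{p^e} n)) = \Phi(\phi)(F^e_*(fn)) \in \tau(\omega_X; \Delta + D),
\]
so $\phi(F^e_* n) \in f^{-1}\tau(\omega_X; \Delta + D)$. Minimality of $\tau(\omega_X; \Delta)$ then yields the other inclusion, completing the proof.

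The only mildly delicate point is keeping track of the Frobenius-twisted $\O_X$-module structure on $F^e_*(\blank)$ when verifying that $\Phi$ is $\O_X$-linear and a bijection; everything else is a direct application of the minimality characterization in \autoref{def.ParamTestSubmodule}.
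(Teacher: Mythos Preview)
The paper does not supply a proof of this lemma; it is simply recorded with a citation to \cite[Proposition 7.11(3)]{SchwedeTakagiRationalPairs} and used immediately afterward. So there is nothing in the paper to compare your argument against.

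That said, your proof is correct and is essentially the standard one. The bijection $\Phi$ is well-defined precisely because $D$ is integral (so $\lceil (p^e-1)(\Delta+D)\rceil = \lceil (p^e-1)\Delta\rceil + (p^e-1)D$), and your use of the identity $r \cdot F^e_*(m) = F^e_*(r^{p^e}m)$ to pull the factor $f$ across is exactly right. Both minimality arguments go through as written; the check that $f^{-1}\tau(\omega_X;\Delta+D) \subseteq \omega_X$ in the first paragraph is needed for the second one and you have it. There is no gap.
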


\begin{definition}
\label{def.NonEffectiveTestSubmodule}
Suppose that $(X, \Gamma)$ is a pair.  Fix a Cartier divisor $D$ on $X$ such that $\Gamma + D$ is effective (these always exist on affine charts).  Then the \emph{parameter test module of $(X, \Gamma)$} is defined as
\[
    \tau(\omega_X; \Gamma) := \tau(\omega_X; \Gamma + D) \tensor \O_X(D) \, \, .
\]
It is also easy to verify that this definition is independent of the choice of $D$, hence our local definition globalizes.  It is straightforward to check that $\tau(\omega_{X};\Gamma) \subseteq \omega_{X}(\lceil - \Gamma \rceil)$.
\end{definition}

In defining the test ideal of a pair below, we handle the non-effective case analogously.

\begin{definition}\label{def.TestIdeal}
Suppose that $X$ is reduced and $F$-finite. The \emph{test ideal} $\tau(X)$ is the unique smallest ideal $J$ of $\O_X$, non-zero on every irreducible component of $X$, such that $\phi(F^e_* J) \subseteq J$ for every \emph{local} section $\phi \in \sHom_{\O_X}(F^e_*\O_X,\O_X)$ and all $e >0$.

If $(X, \Delta)$ is a pair with $\Delta \geq 0$, the \emph{test ideal} $\tau(X;\Delta)$ is the unique smallest non-zero ideal $J$ of $\O_X$ such that $\phi(F^e_* J) \subseteq J$ for any \emph{local} section $\phi \in \sHom_{\O_X}(F^e_* \O_X( \lceil (p^e - 1) \Delta \rceil), \O_X)$ and all $e > 0$, noting that $J \subseteq \O_X \subseteq \O_X( \lceil (p^e - 1)\Delta \rceil)$.

As with the parameter test module, one has for any effective integral Cartier Divisor $D$ the equality $\tau(X;\Delta+D)=\tau(X;\Delta) \tensor \O_X(-D)$, which allows one to extend the definition to the non-effective case as above (see \cite{SchwedeTakagiRationalPairs} for further details).
Furthermore, the same subtle albeit well-known arguments are again required to show these ideals exist \cite[Proposition 3.21]{SchwedeTestIdealsInNonQGor} (see also \cite{SchwedeTuckerTestIdealSurvey}).
\end{definition}

\begin{remark}
As before, the preceding definition is non-standard; rather,
what we have just defined is an alternative yet equivalent characterization of the \emph{big} or \emph{non-finitistic} test ideal, commonly denoted in the literature by $\tau_b(X,\Delta)$ or $\tilde\tau(X,\Delta)$. However, in many situations (and conjecturally in general) the big test ideal agrees with the classically defined or \emph{finitistic} test ideal. Indeed, these two notions are known to coincide whenever $K_X + \Delta$ is $\bQ$-Cartier \cite{TakagiInterpretationOfMultiplierIdeals,BlickleSchwedeTakagiZhang} -- the only setting considered in this paper.  For this reason, as well as our belief that the big test ideal is the correct object of study in general,
we will drop the adjective \emph{big} from the terminology throughout.
\end{remark}

Strictly speaking the object $\tau(\omega_{X};K_{X} + \Delta)$ is ambiguous as $K_{X}$ is not uniquely determined.  Indeed supposing $\Delta \geq 0$, as seen from \autoref{lem.PullOutCartierForTestSubmod}, different choices of $K_{X}$ give rise to different submodules $\tau(\omega_{X};K_{X} + \Delta)$ of $\omega_{X}$.  However, we in fact have $\tau(\omega_{X} ; K_{X} + \Delta) \subseteq \omega_{X}(-\lfloor K_{X} + \Delta\rfloor)$, so that $\tau(\omega_{X};K_{X} + \Delta)$ is identified with a submodule of $\O_{X}(- \lfloor \Delta \rfloor)$ using Convention~\ref{rem.HowToEmbedOmegaXKX}.  As was the case with the multiplier module, this construction is independent of the choice of $K_{X}$.

\begin{lemma}
\label{lem:testidealmodulecorresp}
If $X$ is an $F$-finite and $(X, \Delta)$ is a pair, then
$\tau(\omega_X; K_X + \Delta) = \tau(X; \Delta)$.
\end{lemma}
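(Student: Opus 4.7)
The plan is to identify the two test objects by exhibiting an equivalence between the families of maps defining them, via the canonical identification $\omega_X \cong \O_X(K_X)$ together with the projection formula for Frobenius.

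First, I would work on an affine chart $X = \Spec R$ and reduce to the case where both $\Delta \geq 0$ and $K_X + \Delta \geq 0$. For this, pick an integral Cartier divisor $D \geq 0$ so that $\Delta + D$ and $K_X + \Delta + D$ are both effective; applying \autoref{lem.PullOutCartierForTestSubmod} on the canonical side and the analogous twisting property for test ideals recorded in \autoref{def.TestIdeal} on the structure-sheaf side, it suffices to prove the statement in the effective case governed by \autoref{def.ParTestSub} and \autoref{def.TestIdeal} directly. Reinterpreted via \autoref{rem.HowToEmbedOmegaXKX}, the desired equality is
\[
\tau(\omega_X; K_X + \Delta) \otimes \O_X(-K_X) = \tau(X; \Delta)
\]
as submodules of $\O_X(-\lfloor \Delta \rfloor)$.

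Second, the heart of the proof is a canonical isomorphism of $F^e_*\O_X$-modules
\[
\sHom_{\O_X}\bigl(F^e_* \omega_X(\lceil (p^e-1)(K_X + \Delta) \rceil), \omega_X\bigr) \cong \sHom_{\O_X}\bigl(F^e_* \O_X(\lceil (p^e-1)\Delta \rceil), \O_X\bigr).
\]
Since $K_X$ is integral, $\lceil (p^e-1)(K_X+\Delta)\rceil = (p^e-1)K_X + \lceil (p^e-1)\Delta\rceil$, so that $\omega_X(\lceil (p^e-1)(K_X+\Delta)\rceil) \cong \O_X(p^e K_X + \lceil (p^e-1)\Delta\rceil)$. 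Applying the projection formula for the finite morphism $F^e$, together with $F^{e*}\O_X(K_X) = \O_X(p^e K_X)$ (as reflexive sheaves), rewrites the latter as $F^e_* \O_X(\lceil (p^e-1)\Delta\rceil) \otimes \O_X(K_X)$; standard Hom-tensor adjunction for the reflexive rank-one sheaf $\O_X(K_X)$ then strips off the twist, yielding the claimed isomorphism. This is essentially the $K_{X}+\Delta$ analogue of the computation in \autoref{lem:tracepremultdivisorcomputation}.

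Third, I would check that the above bijection of Hom-modules is compatible with the stability conditions: a submodule $M \subseteq \omega_X$ is preserved by every local section $\phi$ of the left-hand Hom if and only if the corresponding submodule $M \otimes \O_X(-K_X) \subseteq \O_X$ is preserved by every local section $\phi'$ of the right-hand Hom. Since both $\tau(\omega_X; K_X+\Delta)$ and $\tau(X;\Delta)$ are characterized as the unique smallest non-zero such stable submodule, the two correspond under the identification, giving the lemma.

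The main obstacle is the precise invocation of the projection formula and Hom-tensor adjunction in the non-locally-free setting, since $\O_X(K_X)$ is in general only reflexive of rank one. I would handle this by first verifying all the identifications on the regular locus $U \subseteq X$, where $\O_X(K_X)$ is a line bundle and the projection formula is classical, and then extending across the complement (of codimension at least two) using the $S_2$-property of the coherent reflexive sheaves involved. Once this technicality is in place, the argument is purely formal.
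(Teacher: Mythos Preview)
Your proposal is correct and follows essentially the same route as the paper: reduce to the effective case, identify the two families of $p^{-e}$-linear maps
\[
\sHom_{\O_X}\bigl(F^e_*\omega_X(\lceil (p^e-1)(K_X+\Delta)\rceil),\omega_X\bigr)\;\cong\;\sHom_{\O_X}\bigl(F^e_*\O_X(\lceil (p^e-1)\Delta\rceil),\O_X\bigr),
\]
and conclude by the minimality characterization of each test object. The only packaging difference is that the paper avoids your reflexive--sheaf projection--formula technicality by invoking Convention~\ref{rem.HowToEmbedOmegaXKX}: it fixes a rational section of $\omega_X$ to embed both sides concretely in $K(X)$, so that the two Hom-modules become \emph{literally equal} at the generic point and the compatibility of stability conditions is read off from an explicit commutative diagram rather than an abstract isomorphism.
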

\begin{proof}
Choose a rational section $s \in \omega_X$ determining a canonical divisor $K_X=K_{X,s} = \operatorname{div} s$ and the embedding
$\omega_{X} \subseteq \omega_X \tensor K(X) \to[s \mapsto 1] K(X)$.  We will write $\omega_{X} = \O_{X}(K_{X})$ for the duration of the proof without further remark.
Working locally, it is harmless to assume $K_{X}$ and $\Delta$ are both effective by Definition \ref{def.NonEffectiveTestSubmodule}, so that
$\tau(\omega_{X};K_{X} + \Delta) \subseteq \omega_{X}(-\lfloor K_{X} + \Delta \rfloor) = \O_{X}(-\lfloor \Delta \rfloor)$ gives in particular
$\tau(\omega_{X};K_{X} + \Delta) \subseteq \O_{X} \subseteq \omega_{X}$.

Next, observe for all $e > 0$ that
\[
\sHom_{\O_{X}}(F^{e}_{*}\omega_{X}(\lceil (p^{e}-1)(K_{X}+ \Delta )\rceil),\omega_{X} ) = \sHom_{\O_{X}}(F^{e}_{*}\O_{X}(\lceil (p^{e}-1)\Delta \rceil),\O_{X})
\]
in a very precise sense; they are equal after mapping to the corresponding stalks at the generic point of $X$, which are explicitly identified with one another.  Said another way, the image of $F^e_* \O_X( \lceil (p^e - 1) \Delta \rceil) \subseteq F^{e}_{*}\omega_{X}(\lceil (p^{e}-1)(K_{X}+ \Delta )\rceil)$ under every local homomorphism $\phi \in \sHom_{\O_{X}}(F^{e}_{*}\omega_{X}(\lceil (p^{e}-1)(K_{X}+ \Delta )\rceil),\omega_{X} )$ satisfies $\phi(F^e_* \O_X( \lceil (p^e - 1) \Delta \rceil)) \subseteq \O_{X}$, giving rise to a commutative diagram
\[
\begin{array}{ccccc}
F^{e}_{*}\omega_{X} &\subseteq& F^e_* \omega_X\left( \lceil (p^e - 1) (K_X + \Delta) \rceil \right) &\xymatrix{ \ar[r]^{\phi} &} & \omega_{X} \bigskip \\
\rotatebox{90}{$\subseteq$} && \rotatebox{90}{$\subseteq$} && \rotatebox{90}{$\subseteq$}  \\
F^{e}_{*} \O_{X} & \subseteq & F^e_* \O_X( \lceil (p^e - 1) \Delta \rceil)& \xymatrix{ \ar[r]^{\phi} &} & \O_{X}
\end{array}
\]
and uniquely accounting for every local homomorphism in $\sHom_{\O_{X}}(F^e_* \O_X( \lceil (p^e - 1) \Delta \rceil), \O_{X})$.




The desired conclusion is now an exercise in manipulating definitions.  As $ \tau(X; \Delta) \subseteq \omega_{X}$ is preserved under the local homomorphisms in $\sHom_{\O_{X}}(F^{e}_{*}\omega_{X}(\lceil (p^{e}-1)(K_{X}+ \Delta )\rceil),\omega_{X} )$, we must have $\tau(\omega_{X};K_{X}+ \Delta) \subseteq \tau(X;\Delta)$ by the definition of $\tau(\omega_{X};K_{X}+ \Delta)$.  Conversely, as $\tau(\omega_{X};K_{X}+ \Delta) \subseteq \O_{X}$ is preserved under $\sHom_{\O_{X}}(F^{e}_{*}\O_{X}(\lceil (p^{e}-1)\Delta \rceil),\O_{X})$, we  must have $\tau(X;\Delta) \subseteq \tau(\omega_{X};K_{X}+ \Delta)$ and the statement follows.
\end{proof}

\subsection{Transformation behavior of test ideals and multiplier ideals}

One of the contributions of this paper is a further clarification of the transformation behavior of test and multiplier ideals. Let us summarize what is known so far for alterations, which can always be viewed as compositions of proper birational maps and finite dominant maps.

Let us first consider the classical (and transparent) case of the multiplier ideal in characteristic zero \cite[9.5.42]{LazarsfeldPositivity2}.  Essentially by definition,  the multiplier ideal of a log-\mbox{$\Q$-Gorenstein} pair $(X,\Delta_{X})$ is well-behaved under a proper birational morphism $\pi \: Y \to X$ and satisfies
\begin{equation}
\label{eq:multbirat}
   \pi_*\mJ(Y,\Delta_Y)=\mJ(X,\Delta_X) \quad \mbox{with} \quad \Delta_Y \colonequals \pi^*(K_X+\Delta_X)-K_Y
\end{equation}
where we have arranged that $\pi_*K_Y=K_X$ as usual.  If rather $\pi \: Y \to X$ is a finite dominant map, one sets $K_{Y} = \pi^{*}K_{X} + \Ram_{\pi}$ where $\Ram_{\pi}$ is the ramification divisor, and has the transformation rule
\begin{equation}
\label{eq:multfiniteint}
    \pi_*\mJ(Y,\Delta_Y) \cap K(X) = \mJ(X,\Delta_{X}) \quad \mbox{with} \quad \Delta_Y \colonequals \pi^*(K_X+\Delta_X)-K_Y = \pi^{*}\Delta_X- \Ram_{\pi} \, \, .
\end{equation}
In \autoref{sec.TransformationRulesForMultiplier}, we further generalize this rule to incorporate the trace map \autoref{ex.traceForFinite}
\begin{equation}
\label{eq:multfinitetrace}
\Tr_{\pi}(\pi_*\mJ(Y,\Delta_Y)) = \J(X,\Delta_{X})
\end{equation}
in the process of showing our main theorem in characteristic zero.

In characteristic $p>0$, the transformation rule \eqref{eq:multbirat} for the multiplier ideal under proper birational maps once again follows immediately from the definition.  However, the behavior of the multiplier ideal for finite maps is more complicated and not fully understood in general -- even for finite separable maps.  In \autoref{sec.TransformationRulesForMultiplier}, we will show that both \eqref{eq:multfiniteint} and \eqref{eq:multfinitetrace} hold for separable finite maps of degree prime to $p$ (and more generally when $\tr_{\pi}(\pi_{*}\O_{Y}) = \O_{X}$).  However,  Examples 3.10, 6.13, and 7.12 in \cite{SchwedeTuckerTestIdealFiniteMaps} together show that neither formula is valid for arbitrary separable finite maps in general.

In contrast, the last two authors in \cite{SchwedeTuckerTestIdealFiniteMaps} have completely described the behavior of the test ideal of a pair $(X, \Delta_{X})$ under arbitrary finite dominant maps $\pi \: Y \to X$.  In the separable case, one again has
\begin{equation*}
\Tr_{\pi}(\pi_*\tau(Y,\Delta_Y)) = \tau(X,\Delta_{X}) \quad \mbox{with} \quad \Delta_Y \colonequals \pi^*(K_X+\Delta_X)-K_Y = \pi^{*}\Delta_X- \Ram_{\pi} \, \, .
\end{equation*}
More generally, when $\pi$ is not necessarily separable, a similar description holds after reinterpretation of the ramification divisor (via the Grothendieck trace).  However, a formula as simple as \eqref{eq:multbirat} relating test ideals under a birational map cannot hold.  Indeed, if $\pi \: Y \to X$ is a log resolution of $(X, \Delta_{X})$,
then the multiplier and test ideal of $(Y, \Delta_{Y})$ will agree while those of $(X,\Delta_X)$ may not (\cf \cite[Theorems 2.13, 3.2]{TakagiInterpretationOfMultiplierIdeals}).  Nonetheless in \autoref{thm.TransformationOfTestIdealsUnderDominantMaps} we will give a transformation rule -- albeit far more complex -- for the test ideal under alterations in general, so in particular for birational morphisms.

In summary we observe that the test ideal behaves well under finite maps (and may be computed using either finite maps or alterations by \autoref{thm.MainThm}), whereas its behavior under birational maps is much more subtle. On the other hand, the multiplier ideal in characteristic zero behaves well under finite and birational maps, however finite maps will not suffice for its computation. The multiplier ideal in positive characteristic is still well behaved under birational transformations, but its behavior under finite maps is more subtle.

\section{$F$-rationality via alterations}

In this section, we will characterize $F$-rationality and, more generally, the parameter test submodule in terms of alterations.  This is -- at the same time -- a special case of our Main Theorem as well as a key ingredient in its proof. The full proof of our Main Theorem in positive characteristic consists of a reduction to the cases treated here and will follow in the next section.

The crux of the argument to follow is based on a version of the \emph{equational lemma} of \cite{HochsterHunekeInfiniteIntegralExtensionsAndBigCM} in the form that is found in \cite{HunekeLyubeznikAbsoluteIntegralClosure}. In fact, we require a variant with an even stronger conclusion; namely, that the guaranteed finite extension may be assumed \emph{separable}. This generalization follows from a recent result of A. Sannai and A. Singh \cite{sannai_galois_2011}.

\begin{lemma}[equational lemma] \label{thmEquationalLemma}
  Consider a domain $R$ with characteristic $p > 0$.  Let $K$ be the fraction field of $R$, $\bar{K}$ an algebraic closure of $K$, and $I$ an ideal of $R$. Suppose $i \geq 0$ and $\alpha \in H^i_I(R)$ is such that $\alpha, \alpha^p, \alpha^{p^2}, \dots$ belong to a finitely generated $R$-submodule of $H^i_I(R)$.  Then there exists a separable \mbox{$R$-subalgebra} $R'$ of $\bar{K}$ that is a finite $R$-module such that the induced map $H^i_I(R) \to H^i_I(R')$ sends $\alpha$ to zero.
\end{lemma}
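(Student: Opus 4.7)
My plan is to prove this in two stages: first invoke the classical (possibly inseparable) equational lemma to produce \emph{some} finite extension killing $\alpha$, and then upgrade to separability using the Sannai--Singh result that the authors have flagged.

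For the first stage, I would argue as in Hochster--Huneke \cite{HochsterHunekeInfiniteIntegralExtensionsAndBigCM} and Huneke--Lyubeznik \cite{HunekeLyubeznikAbsoluteIntegralClosure}. Let $M \subseteq H^i_I(R)$ be a finitely generated $R$-submodule containing the Frobenius orbit $\{\alpha^{p^n}\}_{n \geq 0}$. Fixing generators $m_1,\ldots,m_k$ of $M$ and writing $\alpha = \sum r_j m_j$ and $m_s^p = \sum a_{sj} m_j$ with $r_j, a_{sj} \in R$, the relation between $\alpha^p$ and $\alpha$ becomes a $p$-polynomial equation with coefficients in $R$. Adjoining appropriate roots of such equations to $R$ inside $\bar K$ yields a module-finite $R$-subalgebra $R'' \subseteq \bar K$ in which the image of $\alpha$ vanishes. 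The catch, of course, is that the roots so adjoined are typically $p$-th powers and the extension $R \subseteq R''$ may be inseparable.

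For the second stage, I would factor $R \subseteq R''$ through its separable closure: let $R_s$ be the integral closure of $R$ in the maximal separable subextension of $\mathrm{Frac}(R) \subseteq \mathrm{Frac}(R'')$, so that $R_s/R$ is finite separable and $R''/R_s$ is purely inseparable. It then suffices to show that $\alpha$ already dies in $H^i_I(R_s)$, or more generally that any class killed by a purely inseparable finite extension is already killed by a separable one. This is exactly the content of the refinement of \cite{sannai_galois_2011}, which one applies to the purely inseparable step $R_s \subseteq R''$ (keeping $R''$ inside a chosen algebraic closure $\bar K$ so that the resulting separable algebra is canonically an $R$-subalgebra of $\bar K$).

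The main obstacle is really the second step. The classical equational lemma is essentially bookkeeping with the Frobenius matrix on $M$, but the roots one is forced to extract are visibly inseparable, so one cannot avoid a separate argument for separability. The technical heart lies in the Sannai--Singh trick, which exploits group-theoretic/Galois closure considerations to absorb the purely inseparable part into a separable extension without losing the local-cohomology-killing property. As I am allowed to cite \cite{sannai_galois_2011}, I would quote the relevant separable-killing statement as a black box rather than re-deriving it, and combine it with Stage~1 to conclude.
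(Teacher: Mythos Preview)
Your proposal is correct and follows exactly the paper's two-stage approach: invoke the Huneke--Lyubeznik equational lemma to obtain a (possibly inseparable) finite extension killing $\alpha$, pass to the separable closure $R_s$ inside it, and then apply Sannai--Singh to the residual purely inseparable step. The only refinement worth noting is that one does \emph{not} show the image $a$ of $\alpha$ already vanishes in $H^i_I(R_s)$; rather, pure inseparability of $R_s \subseteq R''$ gives $F^e(R'') \subseteq R_s$ for some $e$, whence a short diagram chase shows $F^e(a)=0$ in $H^i_I(R_s)$, and it is precisely this Frobenius-nilpotence that is the hypothesis of \cite[Theorem~1.3(a)]{sannai_galois_2011} used to produce the further \emph{separable} extension of $R_s$ killing $a$.
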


\begin{proof}
The statement of the equational lemma in \cite[Lemma 2.2]{HunekeLyubeznikAbsoluteIntegralClosure} is the same as above without the desired separability. Applying this weaker version we therefore have a finite extension $R \subseteq R'$ such that $H^i_I(R) \to H^i_I(R')$ maps $\alpha$ to zero. Let $R^s$ be the separable closure of $R$ in $R'$, that is all elements of $R'$ that are separable over $R$. The extension $R^s \subseteq R'$ is then purely inseparable, \textit{i.e.} some power of the Frobenius has the property that $F^i(R') \subseteq R^s$. Applying the functor $H^i_I(\blank)$ yields the diagram
\[
\xymatrix{
    H^i_I(R) \ar[r] &H^i_I(R^s) \ar[r]\ar[d]^{F^i} &H^i_I(R') \ar[ld]^{F^i} \\
                    &H^i_I(R^s)
}
\]
Denoting the image of $\alpha$ in $H^i_I(R^s)$ by $a$, this element is mapped to zero $H^i_I(R')$, hence, under the Frobenius $F^i$ it is mapped also to zero in $H^i_I(R^s)$. But this shows that $F^i(a)=0$ in $H^i_I(R^s)$ by the above diagram. Now \cite[Theorem 1.3(1)]{sannai_galois_2011} states that there is a module finite \emph{separable} (even Galois with solvable Galois group) extension $R^s \subseteq R^{\prime\prime}$ such that $a$ is mapped to zero in $H^i_I(R^{\prime\prime})$. But this means that the finite separable extension $R \subseteq R^{\prime\prime}$ is as required: the image of $\alpha \in H^i_I(R)$ in $H^i_I(R^{\prime\prime})$ is zero.
\end{proof}

The main result of this section, immediately below, is a straightforward application of the method of C. Huneke and G. Lyubeznik in \cite{HunekeLyubeznikAbsoluteIntegralClosure}.  Since making this observation, we have been informed that a Matlis dual version of the theorem below has also been obtained by M. Hochster and Y. Yao (in a non-public preprint \cite{HochsterYaoUnpublished}).

\begin{theorem}\label{thm.CharOfParTestSubmodule}
Suppose $X$ is an integral $F$-finite scheme.
\begin{enumerate}
\item For all proper dominant maps $\pi\: Y \to X$ with $Y$ integral, the image of the trace map \eqref{eq.TraceCanonical} contains the parameter test module, \textit{i.e.}
\[
\tau(\omega_X) \subseteq \Image(\myH^{\dim Y - \dim X}\myR \pi_*\omega_Y  \to[\tr_{\pi}] \omega_X) \, \, .
\]
 \label{thm.CharOfParTestSubmodule.a}
\item
There exists a finite separable map $\pi \: Y \to X$ with $Y$ integral such that the image of the trace map equals the parameter test module, \textit{i.e.}
\[
\tau(\omega_X) = \Image(\myH^{\dim Y - \dim X}\myR\pi_*\omega_Y  \to[\tr_{\pi}] \omega_X) \, \, .
\]
\label{thm.CharOfParTestSubmodule.b}
\end{enumerate}
\end{theorem}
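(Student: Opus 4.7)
For the first part, the argument should be essentially immediate from the material developed in the preceding section. \autoref{thm.TraceNonzero} shows that the image
\[
J_\pi \colonequals \tr_\pi\bigl(\myH^{\dim Y - \dim X}\myR\pi_*\omega_Y\bigr) \subseteq \omega_X
\]
is non-zero, and \autoref{prop.EasyContainmentgeneral} gives $\Phi_X(F_* J_\pi) \subseteq J_\pi$. Since \autoref{def.ParTestSub} declares $\tau(\omega_X)$ to be the smallest non-zero submodule of $\omega_X$ stable under $\Phi_X$, the containment $\tau(\omega_X) \subseteq J_\pi$ is forced.

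The real substance lies in the second part, where my plan has two steps: produce a minimum $J_*$ within the family $\{J_\pi\}$ ranging over finite separable dominant covers, then identify this minimum with $\tau(\omega_X)$ via the equational lemma. As the statement is local on $X$, I would reduce to $X = \Spec R$ affine. Given two finite separable dominant covers $\pi_i \: Y_i \to X$ with $Y_i$ integral ($i=1,2$), let $Y_3$ be the normalization of a component of $Y_1 \times_X Y_2$ dominating $X$; then $Y_3 \to X$ is again finite separable dominant, and compatibility of trace with composition (\autoref{lem.CompatOfTraceBasic}(a)) yields $J_{Y_3 \to X} \subseteq J_{\pi_1} \cap J_{\pi_2}$. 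The family $\{J_\pi\}$ is thus filtered downward inside the coherent sheaf $\omega_X$, and by Noetherianness there is a minimum $J_* = J_{\pi_0}$.

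To identify $J_{\pi_0}$ with $\tau(\omega_X)$, the containment $\tau(\omega_X) \subseteq J_{\pi_0}$ is already provided by the first part; for the reverse I would complete at each maximal ideal $\bm$ of $R$ and pass to Matlis duals. Under Grothendieck--Serre duality for $\pi_0$, the inclusion $J_{\pi_0} \subseteq \omega_R$ corresponds to a submodule $K_{\pi_0} \subseteq H^{\dim R}_\bm(R)$ (the kernel of the map induced by the structure morphism of $\pi_0$), while $\tau(\omega_R)$ is Matlis dual to $0^*_{H^{\dim R}_\bm(R)}$ by the characterization of K.~Smith recalled in the remark following \autoref{def.ParTestSub}. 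It therefore suffices to show $0^*_{H^{\dim R}_\bm(R)} \subseteq K_{\pi_0}$. For any $\eta \in 0^*_{H^{\dim R}_\bm(R)}$ there is a test element $c$ with $c F^e(\eta) = 0$ for all $e \geq 0$, so the Frobenius orbit $\eta, F(\eta), F^2(\eta), \dots$ lies inside the finitely generated submodule $\Ann_R(c) \subseteq H^{\dim R}_\bm(R)$. \autoref{thmEquationalLemma} then produces a module-finite separable extension $R \to R'$ with $\eta$ mapping to zero in $H^{\dim R}_{\bm R'}(R')$, that is, $\eta \in K_{R'}$. Dominating both $\Spec R'$ and $Y_0$ by the normalization of a component of their fiber product and invoking the minimality of $J_{\pi_0}$ forces $K_{R'} \subseteq K_{\pi_0}$, whence $\eta \in K_{\pi_0}$, as required.

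The principal obstacle is carrying out this Matlis duality picture carefully in the potentially non-Cohen--Macaulay setting: the identification of $J_{\pi_0}$ with the kernel $K_{\pi_0}$ must be set up through the full dualizing complex $\dcx_R$, which is precisely why the theorem is stated using the hypercohomology $\myH^{\dim Y - \dim X}\myR\pi_*\omega_Y$ in place of $\pi_*\omega_Y$, and why \autoref{thmEquationalLemma} is formulated for arbitrary $H^i_I(R)$ rather than merely top local cohomology.
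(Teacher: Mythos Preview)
Your treatment of part (a) is correct and matches the paper's argument exactly.

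For part (b), however, there are two genuine gaps. First, the claim that ``by Noetherianness there is a minimum $J_* = J_{\pi_0}$'' is unjustified: Noetherianness of $\omega_X$ gives the ascending chain condition on submodules, not the descending one, so a downward-filtered family need not attain a minimum. Second, and more seriously, your application of the equational lemma does not go through. You assert that for $\eta \in 0^*_{H^d_\bm(R)}$ the Frobenius orbit $\{F^e(\eta)\}$ lies in the ``finitely generated'' $c$-torsion submodule of $H^d_\bm(R)$; but $H^d_\bm(R)$ is Artinian rather than Noetherian, and its $c$-torsion (Matlis dual to $\omega_R/c\omega_R$) has finite length only when $\dim R \leq 1$. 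More to the point, $0^*$ itself is Matlis dual to $\omega_R/\tau(\omega_R)$, which typically has positive-dimensional support, so no obvious finitely generated submodule contains the orbit.

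The paper's proof circumvents both issues with a single idea: instead of seeking a global minimum, one works by Noetherian induction on the closed set $\Supp(J_S/\tau(\omega_R))$ (closed subsets of a Noetherian space do satisfy DCC). Localizing at a generic point $\eta$ of this support makes $(J_S/\tau(\omega_R))_\eta$ finite length; its Matlis dual is then a finite-length, hence finitely generated, Frobenius-stable submodule of $H^d_\eta(S_\eta)$, and now the equational lemma applies to produce a further separable extension shrinking the support. Iterating gives the desired finite separable cover. The localization step is precisely what supplies the missing finite-generation hypothesis, and the induction on supports replaces your appeal to a nonexistent DCC on submodules.
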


\begin{remark}
In the above result, it is possible to work with equidimensional reduced (rather than integral) schemes of finite type over a field at the expense of  removing ``separable'' from the conclusion in (b).
\end{remark}

An immediate Corollary of this statement is a characterization of the test submodule.  Recall that an alteration is a generically finite proper dominant morphism of integral schemes.
\begin{corollary}\label{thm.CharOfParTestSubmoduleCorollary}
Suppose $X$ is an integral $F$-finite scheme.
 Then
\[
\tau(\omega_X) = \bigcap_{\pi\: Y \to X} \Image(\myH^{\dim Y - \dim X}\myR\pi_*\omega_Y \to[\tr_{\pi}] \omega_X).
\]
where $\pi$ ranges over all of the maps from an integral scheme $Y$ to $X$ that are either:
\begin{itemize}
\item (separable) finite dominant maps, or
\item (separable) alterations, or
\item  (separable) proper dominant maps.
\end{itemize}
Additionally, if $X$ is a variety over a perfect field, then we may allow $\pi$ to range over
\begin{itemize}
 \item all regular separable alterations to $X$, or
 \item all separable proper dominant maps with $Y$ regular.
\end{itemize}
Furthermore, in this case, there always exists a separable regular alteration $\pi\: Y \to X$ such that $\Image(\pi_* \omega_Y \to[\tr_{\pi}] \omega_X)$ is equal to the parameter test submodule of $X$.
\end{corollary}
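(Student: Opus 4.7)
The plan is to derive the corollary directly from Theorem~\ref{thm.CharOfParTestSubmodule}, using part~(a) for one containment of the intersection formula and part~(b) to witness equality. First I would establish the formula
\[
\tau(\omega_X) = \bigcap_{\pi \: Y \to X} \Image(\myH^{\dim Y - \dim X}\myR\pi_*\omega_Y \to[\tr_{\pi}] \omega_X)
\]
simultaneously for all five listed classes of morphisms. The $\subseteq$ direction is immediate from Theorem~\ref{thm.CharOfParTestSubmodule}(a), which applies to any proper dominant map from an integral scheme, and hence a fortiori to the narrower classes. For the $\supseteq$ direction, observe that Theorem~\ref{thm.CharOfParTestSubmodule}(b) produces a single finite separable map $\pi_0 \: Y_0 \to X$ with $\Image(\tr_{\pi_0}) = \tau(\omega_X)$; since $\pi_0$ lies in each of the three general classes (separable finite dominant, separable alteration, separable proper dominant), it appears in each intersection, forcing the reverse inclusion.

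The substantive content concerns the additional classes available when $X$ is a variety over a perfect field, where we must work with $Y$ regular. The strategy here is to refine the map $\pi_0$ from Theorem~\ref{thm.CharOfParTestSubmodule}(b) by composing it with a regular alteration of $Y_0$. Concretely, I would invoke de Jong's alteration theorem to produce a regular separable alteration $\rho \: Z \to Y_0$, noting that over a perfect field one can arrange $\rho$ to be generically separable (and in fact $Z$ a regular variety). Then $\pi \colonequals \pi_0 \circ \rho \: Z \to X$ is a regular separable alteration of $X$, so in particular $\pi$ also counts as a separable proper dominant map with $Z$ regular.

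The key computation is that this composite achieves equality. By Lemma~\ref{lem.CompatOfTraceBasic}(a), the trace factors as $\tr_{\pi_0 \circ \rho} = \tr_{\pi_0} \circ (\pi_{0*} \tr_{\rho})$, so
\[
\Image(\tr_\pi) \subseteq \Image(\tr_{\pi_0}) = \tau(\omega_X).
\]
The reverse inclusion $\tau(\omega_X) \subseteq \Image(\tr_\pi)$ is supplied by Theorem~\ref{thm.CharOfParTestSubmodule}(a) applied to the proper dominant map $\pi$. Combining, $\Image(\tr_\pi) = \tau(\omega_X)$, which is precisely the "furthermore" statement and also supplies the $\supseteq$ inclusion needed to complete the intersection formula over the two remaining classes (regular separable alterations and separable proper dominant maps with $Z$ regular).

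The main obstacle I anticipate is the existence of a \emph{separable} regular alteration $\rho$ in positive characteristic: one must appeal to de Jong's theorem together with the fact that, since $k$ is perfect and $Y_0$ is an integral variety, one may extract from de Jong's construction a regular alteration with $k(Y_0) \hookrightarrow k(Z)$ separable (equivalently, generically étale after a suitable choice). Apart from this input, everything else is a formal unwinding of the trace compatibilities already encoded in Lemma~\ref{lem.CompatOfTraceBasic} and Theorem~\ref{thm.CharOfParTestSubmodule}.
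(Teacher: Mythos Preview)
Your proposal is correct and follows essentially the same approach as the paper's own proof, which simply states that the corollary ``follows immediately from Theorem~\ref{thm.CharOfParTestSubmodule} and from the existence of regular alterations \cite{de_jong_smoothness_1996}.'' You have accurately unpacked what that terse sentence means: the two containments come from parts (a) and (b) of Theorem~\ref{thm.CharOfParTestSubmodule}, and the regular case is handled by composing the finite separable map from part (b) with a de~Jong separable regular alteration, using trace compatibility to conclude.
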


\begin{proof}
This follows immediately from \autoref{thm.CharOfParTestSubmodule} and from the existence of regular alterations \cite{de_jong_smoothness_1996}.
\end{proof}

\begin{proof}[Proof of \autoref{thm.CharOfParTestSubmodule} (a)]
The statement follows immediately from Propositions \ref{prop.EasyContainmentgeneral} and \ref{thm.TraceNonzero} as well as the definition of $\tau(\omega_X)$.
\end{proof}

The proof of \autoref{thm.CharOfParTestSubmodule.b} follows closely the strategy of \cite{HunekeLyubeznikAbsoluteIntegralClosure}; note that a local version of the statement is also related to the result of K. Smith that ``plus closure equals tight closure for parameter ideals'' \cite{SmithParameterIdeals}. The proof comes down to Noetherian induction once we establish the following lemma.

\begin{lemma}
\label{lem.TechnicalCohomologyKillingLemma}
Let $R \subseteq S$ be a module finite inclusion of domains and denote the image of the trace map by $J_{S} = \Image (\omega_S \to \omega_R)$. If $\tau(\omega_R) \subsetneq J_S$, then there is a separable finite extension of domains $S \subseteq S'$ such that the support of $J_{S'}/\tau(\omega_R)$ is strictly contained in the support of $J_S/\tau(\omega_R)$, where $J_{S'}=\Image(\omega_{S'} \to \omega_R)$.
\end{lemma}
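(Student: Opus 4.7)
The plan is to follow the Huneke--Lyubeznik strategy, using the separable equational lemma (\autoref{thmEquationalLemma}). Fix a minimal prime $P \in \supp(J_S/\tau(\omega_R))$; I will produce a separable module-finite extension $S \subseteq S'$ with $(J_{S'})_P = \tau(\omega_R)_P$. Since $S \subseteq S'$ forces $J_{S'} \subseteq J_S$, removing $P$ from the support is enough to give the strict containment claimed. Localizing at $P$ and passing to the $P$-adic completion lands me in a complete local domain $\widehat{R_P}$ of dimension $d = \dim R_P$, in which $(J_S/\tau(\omega_R))_P$ has finite length by minimality of $P$.

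The next step is a Matlis dual reformulation. Using that the trace map $\omega_S \to \omega_R$ is Grothendieck--Serre dual to $R \hookrightarrow S$, together with the standard description of $\omega_R/\tau(\omega_R)$ as the Matlis dual of $0^*_{H^d_\bm(\widehat{R_P})}$ (the characterization of the parameter test submodule alluded to in the discussion following \autoref{def.ParamTestSubmodule}), one obtains an isomorphism
\[
(J_S/\tau(\omega_R))^{\vee} \;\cong\; 0^*_{H^d_\bm(\widehat{R_P})} \big/ K_S \, \, ,
\]
where $K_S \colonequals \ker(H^d_\bm(\widehat{R_P}) \to H^d_\bm(\widehat{R_P} \otimes_R S))$. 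By hypothesis this dual module is nonzero of finite length, so I can pick finitely many lifts $\alpha_1, \ldots, \alpha_n \in 0^*_{H^d_\bm(\widehat{R_P})}$ whose images generate $0^*/K_S$.

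Now I would apply the equational lemma to each $\alpha_i$. Since $\alpha_i$ lies in the tight closure of zero, there is a test element $c \in R^\circ$ with $c \cdot F^e(\alpha_i) = 0$ for $e \gg 0$, which places the Frobenius orbit of $\alpha_i$ into a finitely generated submodule of $H^d_\bm(\widehat{R_P})$. \autoref{thmEquationalLemma} then provides separable module-finite extensions of $R$ killing each $\alpha_i$; taking their compositum inside a fixed algebraic closure of $K(R)$ yields a single separable module-finite extension $R \subseteq R'$ in which every $\alpha_i$ maps to zero in the corresponding $H^d_\bm$. Setting $S' \colonequals S \cdot R'$ inside that algebraic closure gives a separable module-finite extension of $S$, and by construction $0^*_{H^d_\bm(\widehat{R_P})} \subseteq K_{S'}$. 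Matlis dualizing yields $(J_{S'})_P = \tau(\omega_R)_P$, finishing the argument.

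The principal technical obstacle will be verifying that each $\alpha_i \in 0^*_{H^d_\bm(\widehat{R_P})}$ has a Frobenius orbit contained in a finitely generated submodule, so that the equational lemma truly applies; this is standard in the $F$-finite setting via test elements, but requires care. A secondary concern is descent from the completion back to an honest module-finite separable extension of the global ring $S$, which is handled by constructing $R'$ from the outset inside a fixed algebraic closure of $K(R)$, so that $R'$ is an actual domain extension of the global $R$ and $S' = S \cdot R'$ is automatically module-finite and separable over $S$.
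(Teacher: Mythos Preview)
Your overall plan---localize at a generic point of the support, Matlis-dualize into local cohomology, and invoke the equational lemma---matches the paper's. The gap is precisely where you flag it: the Frobenius-boundedness of the $\alpha_i$ in $H^d_{\bm}(R_P)$. Your proposed justification via a test element does not work. Having $c\cdot F^e(\alpha_i)=0$ only places the orbit inside $(0:_{H^d_{\bm}(R_P)}c)$, which is the image of $H^{d-1}_{\bm}(R_P/cR_P)$ and is typically \emph{not} finitely generated; likewise $0^*_{H^d_{\bm}(R_P)}$ itself, being Matlis dual to the coherent module $\omega_{R_P}/\tau(\omega_{R_P})$, is Artinian but has finite length only if $R_P$ is $F$-rational on its punctured spectrum. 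So the hypothesis of \autoref{thmEquationalLemma} is not verified, and this step is not ``standard.''

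The paper's key move, which you are missing, is to work in $H^d_\eta(S_\eta)$ rather than over $R$. The Matlis dual $(J_S/\tau(\omega_R))_\eta^\vee$ is identified with the \emph{image} of $0^*_{H^d_\eta(R_\eta)}$ inside $H^d_\eta(S_\eta)$, and this image has finite length by the choice of $\eta$. The essential input is \autoref{prop.EasyContainmentgeneral}: $\Phi_R(F_*J_S)\subseteq J_S$, so dually this finite-length image is Frobenius-stable in $H^d_\eta(S_\eta)$. Now \autoref{thmEquationalLemma} applies over $S_\eta$, yielding a separable finite extension $S_\eta\subseteq T$; one globalizes by taking $S'$ to be the normalization of $S$ in $\Frac(T)$. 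Two smaller points: the completion is unnecessary and problematic ($\widehat{R_P}$ need not be a domain, and extensions of $\widehat{R_P}$ do not sit inside $\overline{K(R)}$, so your descent plan fails as written---the paper just localizes), and since the equational lemma is applied over $S_\eta$ the extension $S'$ of $S$ arises directly, with no need for the compositum $S\cdot R'$.
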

\begin{proof}
Choose $\eta \in \Spec R$ to be the generic point of a component of the support of $J_S/\tau(\omega_R)$ and set $d= \dim R_\eta$. By construction $(J_S/\tau(\omega_R))_\eta=J_{S_\eta}/\tau(\omega_{R_\eta})$ has finite length, and hence so also must its Matlis dual
\[
    (J_S/\tau(\omega_R))_\eta^\vee=\Hom(J_{S_\eta}/\tau(\omega_{R_\eta}),E(R_\eta/\eta))\, .
\]
By Matlis duality applied to the sequence $\omega_{S_\eta} \onto (J_{S_\eta}/\tau(\omega_{R_\eta})) \into \omega_{R_\eta}/\tau(\omega_{R_\eta})$ one observes that $(J_S/\tau(\omega_R))_\eta^\vee \subseteq H^d_\eta(S_\eta)$ is identified with the image of the tight closure of zero $0^*_{H^d_\eta(R_\eta)}=(\omega_{R_\eta}/\tau(\omega_{R_\eta}))^\vee$ in $H^d_\eta(S_\eta)$. By
\autoref{prop.EasyContainmentgeneral}  we have for $\Phi_R : F_* \omega_R \to \omega_R$ that $\Phi_R(J_S) \subseteq J_S$. Hence the dual of $J_S/\tau(\omega_R)$ is stable under the action of the Frobenius on $H^d_\eta(S_{\eta})$. \ie $F((J_S/\tau(\omega_R))_\eta^\vee )\subseteq (J_S/\tau(\omega_R))_\eta^\vee$  (phrased differently: the tight closure of zero is Frobenius stable, hence so is its image). This implies that for any element $\alpha \in (J_S/\tau(\omega_R))_\eta^\vee$ the powers $\alpha, \alpha^p, \alpha^{p^2},\ldots$ must also lie in the finite length $(J_{S_\eta}/\tau(\omega_{R_\eta}))^\vee$. Applying \autoref{thmEquationalLemma} to $\alpha \in (J_{S_\eta}/\tau(\omega_{R_\eta}))^\vee$ repeatedly (\textit{e.g.} for a finite set of generators) we obtain a separable integral extension $S_\eta \subseteq T$ such that $H^d_\eta(S_\eta) \to H^d_\eta(T)$ maps $(J_{S_\eta}/\tau(\omega_{R_\eta}))^\vee$ to zero. By taking $S'$ to be the normalization of $S$ in the total field of fractions of $T$ we see that $T=S'_\eta$ and that the finite extension $R \subseteq S'$ is separable. Translating this back via Matlis duality this means that the map $\omega_{S'_\eta} \to \omega_{R_\eta}$ sends $J_{S_\eta'}$ into $\tau(\omega_{R_\eta})$. In particular $\eta \not\in \Supp (J_{S'}/\tau(\omega_R))$.
\end{proof}

\begin{proof}[Proof of \autoref{thm.CharOfParTestSubmodule} (b)]
Without loss of generality, we may assume that $X = \Spec R$ is affine.
Starting with the identity $R=S_0$ we successively produce, using \autoref{lem.TechnicalCohomologyKillingLemma}, a sequence of separable finite extensions $R=S_0 \subseteq S_1 \subseteq S_2 \subseteq S_3 \ldots$ such that the support of $J_{S_{i+1}}/\tau(\omega_R)$ is strictly smaller than the support of $J_{S_i}/\tau(\omega_R)$ until $J_{S_{i}}=\tau(\omega_R)$ by Noetherian induction.
\end{proof}

The following important corollary (which can also be proven directly from the equational lemma without reference to the above results) should be viewed in the context of the definition of pseudo-rationality (see \autoref{sec.MultiplierRational}), as well as the Kempf-criterion for rational singularity \cite[p. 50]{KempfToroidalEmbeddings} in characteristic zero.

\begin{corollary}
\label{cor.CharOfFRational}
For an $F$-finite Cohen-Macaulay domain $R$ the following are equivalent.
\begin{enumerate}
    \item $R$ is $F$-rational, \ie there is no non-trivial submodule $M \subseteq \omega_R$ which is stable under $\Phi_R: F_*\omega_R \to \omega_R$.
    \item For all finite extensions $R \to S$ (which may be taken to be separable if desired) the induced map $\omega_S \to \omega_R$ is surjective.
    \item For all alterations $\pi:Y \to X = \Spec R$ ($\pi$ may be taken to be separable and or regular if $R$ is of finite type over a perfect field) the induced map $\pi_* \omega_Y \to \omega_X$ is surjective.
\end{enumerate}
\end{corollary}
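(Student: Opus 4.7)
The plan is to reduce the statement to \autoref{thm.CharOfParTestSubmodule} and \autoref{thm.CharOfParTestSubmoduleCorollary}, exploiting the fact that for a Cohen-Macaulay ring, $F$-rationality is equivalent to $\tau(\omega_R) = \omega_R$. Throughout, set $X = \Spec R$ and recall that for any proper dominant generically finite $\pi : Y \to X$ the trace map $\tr_\pi : \pi_* \omega_Y \to \omega_X$ is non-zero by \autoref{thm.TraceNonzero} (using Cohen-Macaulayness of $R$ to replace $\myH^{\dim Y - \dim X}\myR \pi_* \omega_Y$ with $\pi_* \omega_Y$ in the relevant cases, although this is only needed for subsequent clean-up).

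The implication (1) $\Rightarrow$ (3) is essentially \autoref{thm.CharOfParTestSubmodule}(a): for any alteration $\pi : Y \to X$, one has $\tau(\omega_R) \subseteq \Image(\pi_* \omega_Y \to \omega_R)$, and under the hypothesis of $F$-rationality together with Cohen-Macaulayness we have $\tau(\omega_R) = \omega_R$, so the trace is surjective. The implication (3) $\Rightarrow$ (2) is immediate because any finite dominant map from a normal domain is in particular an alteration. For (2) $\Rightarrow$ (1), I would invoke \autoref{thm.CharOfParTestSubmodule}(b): there exists a (separable) finite extension $R \subseteq S$ with $S$ a domain such that $\Image(\omega_S \to \omega_R) = \tau(\omega_R)$. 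Assuming (2), this image is all of $\omega_R$, so $\tau(\omega_R) = \omega_R$, which together with Cohen-Macaulayness gives $F$-rationality.

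For the parenthetical refinement that the alterations may be taken regular (resp.\ separable) when $R$ is of finite type over a perfect field, I would use the second half of \autoref{thm.CharOfParTestSubmoduleCorollary}, which produces a single separable regular alteration realizing $\tau(\omega_R)$ as the image of its trace map. The same reasoning as above then shows that surjectivity along this one cover already implies $\tau(\omega_R) = \omega_R$. Similarly, insisting on separability for the finite extensions in (2) poses no difficulty because the finite extension produced in \autoref{thm.CharOfParTestSubmodule}(b) is separable by construction.

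There is no real obstacle here; the proof is essentially a bookkeeping exercise once \autoref{thm.CharOfParTestSubmodule} and \autoref{thm.CharOfParTestSubmoduleCorollary} are available. The only subtlety worth emphasizing is that for a Cohen-Macaulay domain the cohomology sheaf $\myH^{\dim Y - \dim X} \myR \pi_* \omega_Y$ need not reduce to $\pi_* \omega_Y$ when $Y$ fails to be Cohen-Macaulay, but this is harmless: passing through a regular (hence Cohen-Macaulay) alteration dominating any given one, together with \autoref{lem.CompatOfTraceBasic}(a), allows one to replace higher direct images by ordinary pushforwards in all the places where surjectivity is tested.
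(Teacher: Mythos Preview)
Your proof is correct and is exactly the argument the paper has in mind; the corollary is stated without proof because it follows immediately from \autoref{thm.CharOfParTestSubmodule} via the equivalence of $F$-rationality with $\tau(\omega_R) = \omega_R$ for Cohen--Macaulay rings. One minor correction to your final paragraph: the concern there is misplaced, since for an alteration $\pi : Y \to X$ one always has $\dim Y = \dim X$, and hence $\myH^{\dim Y - \dim X}\myR\pi_*\omega_Y = \myH^0\myR\pi_*\omega_Y = \pi_*\omega_Y$ regardless of whether $Y$ or $R$ is Cohen--Macaulay; no detour through a regular alteration is needed.
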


In fact, utilizing local duality and \cite{HunekeLyubeznikAbsoluteIntegralClosure} once again to obtain a further finite cover which annihilates the local cohomology modules below the dimension, we obtain the following characterization of \mbox{$F$-rationality} without the Cohen-Macaulay hypothesis.

\begin{corollary}
For an $F$-finite domain $R$, the following are equivalent:
\begin{enumerate}
\item  $R$ is $F$-rational.
\item For each integer $i \in \bZ$, and every (separable) finite extension of rings $R \to S$, the induced map $\myH^i\omega_S^{\mydot} \to \myH^i\omega_R^{\mydot}$ is surjective.
\item For each integer $ i \in \bZ$ and every (separable regular, if $R$ is of finite type over a perfect field) alteration $\pi : Y \to X = \Spec R$ , the induced map $\myH^i \myR \pi_* \omega_Y^{\mydot} \to \myH^i \omega_X^{\mydot}$ is surjective.
\end{enumerate}
\end{corollary}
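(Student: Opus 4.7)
The plan is to bootstrap from the Cohen-Macaulay case \autoref{cor.CharOfFRational} using Matlis duality and the Huneke--Lyubeznik observation \cite{HunekeLyubeznikAbsoluteIntegralClosure} that below-top local cohomology can be annihilated by module-finite covers. The easy directions $(1) \Rightarrow (3) \Rightarrow (2)$ will use that $F$-rational implies Cohen-Macaulay: in that case $\omega_R^{\mydot} \cong \omega_R[\dim R]$ has cohomology only in degree $-\dim R$, making surjectivity in other degrees automatic; at $i = -\dim R$ the surjection $\myH^{\dim Y - \dim R}\myR\pi_*\omega_Y \to \omega_R$ from \autoref{cor.CharOfFRational} factors through $\myH^{-\dim R}\myR\pi_*\omega_Y^{\mydot}$, forcing the latter map onto $\omega_R$ to surject. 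For $(3) \Rightarrow (2)$ one simply notes that a module-finite extension $R \subseteq S$ yields an alteration $\Spec S \to \Spec R$ with $\myR\pi_* = \pi_*$.

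The substance is in $(2) \Rightarrow (1)$, which I would split into two sub-steps: first show $R$ is Cohen-Macaulay under (2), and then invoke \autoref{cor.CharOfFRational} at $i = -\dim R$. For Cohen-Macaulayness I would work locally at a closed point $\bm$, so that Matlis duality identifies $\myH^i\omega_R^{\mydot}$ with the Matlis dual of $H^{-i}_{\bm}(R)$; a surjection $\myH^i\omega_S^{\mydot} \twoheadrightarrow \myH^i\omega_R^{\mydot}$ then dualizes to an injection $H^{-i}_{\bm}(R) \hookrightarrow H^{-i}_{\bm}(S)$ for every module-finite extension $R \subseteq S$. Suppose toward a contradiction that $H^{j}_{\bm}(R) \neq 0$ for some $j < \dim R$ and pick a nonzero class $\alpha$. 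The key input is the main result of \cite{HunekeLyubeznikAbsoluteIntegralClosure}: the image of $H^{j}_{\bm}(R)$ in the colimit $H^{j}_{\bm}(R^{+}) = \varinjlim_{S} H^{j}_{\bm}(S)$ vanishes, so $\alpha$ is killed by passing to some module-finite extension $R \subseteq S$; by \autoref{thmEquationalLemma} such an $S$ may even be chosen separable over $R$. This $S$ witnesses the failure of injectivity of $H^{j}_{\bm}(R) \to H^{j}_{\bm}(S)$ and thus, Matlis dually, the failure of surjectivity of $\myH^{-j}\omega_S^{\mydot} \to \myH^{-j}\omega_R^{\mydot}$, contradicting (2). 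Hence $R$ is Cohen-Macaulay.

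Once $R$ is Cohen-Macaulay, $\omega_R^{\mydot}$ is concentrated in the single degree $-\dim R$, so at $i = -\dim R$ condition (2) reduces to the statement that every (separable) finite extension $R \to S$ induces a surjection $\omega_S \to \omega_R$, which is precisely the hypothesis of \autoref{cor.CharOfFRational} and forces $R$ to be $F$-rational. The one subtle step, and the main obstacle of the plan, is the appeal to \cite{HunekeLyubeznikAbsoluteIntegralClosure} together with the separable refinement \autoref{thmEquationalLemma}; granting these, everything else is a direct Matlis duality argument that reduces the statement to the Cohen-Macaulay case already proven.
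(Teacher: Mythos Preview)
Your proposal is correct and follows essentially the same approach as the paper, which only gives a one-sentence proof hint (``utilizing local duality and \cite{HunekeLyubeznikAbsoluteIntegralClosure} once again to obtain a further finite cover which annihilates the local cohomology modules below the dimension''). You have correctly unpacked this: the directions $(1)\Rightarrow(3)\Rightarrow(2)$ use that $F$-rational implies Cohen--Macaulay together with the factorization of $\tr_\pi$ through $\myH^{-\dim R}\myR\pi_*\omega_Y^{\mydot}$ from \eqref{eq.TraceCanonical}, and for $(2)\Rightarrow(1)$ you first establish Cohen--Macaulayness via local duality and the cohomology-annihilation result of Huneke--Lyubeznik (with the separable refinement from \autoref{thmEquationalLemma}), then reduce to \autoref{cor.CharOfFRational}.
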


\begin{remark}
The main result of the paper \cite{HunekeLyubeznikAbsoluteIntegralClosure} which inspired our proof is that for a local ring $(R,\bm)$ of dimension $d$ that is a homomorphic image of a Gorenstein ring, there is a module finite extension $R \subseteq S$ such that the induced map $H^i_\bm(R) \to H^i_\bm(S)$ is zero for $i < d$. A local dual statement to this is that the induced map on dualizing complexes $\omega_S^\mydot \to \omega_R^\mydot$ is zero on cohomology $\myH^i(\omega_S^\mydot) \to \myH^i(\omega_R^\mydot)$ for $i\neq - \dim R$. What we accomplish here is that we clarify the case $i=-\dim R$. With $d=\dim R$ we just showed that one can also achieve that the image $\myH^{-d}(\omega_S^\mydot) \to \myH^{-d}(\omega_R^\mydot) \cong \omega_R$ is the parameter test submodule $\tau(\omega_R)$.  Of course, the dual statement thereof is: the tight closure of zero $0^*_{H^d_\bm(R)}$ is mapped to zero under the map $H^d_\bm(R) \to H^d_\bm(S)$.  It is exactly this statement, and further generalizations, which are first and independently by M. Hochster and Y. Yao in \cite{HochsterYaoUnpublished}.
\end{remark}

\section{Test ideals via alterations}

In this section, we explore the behavior of test ideals under proper dominant maps and prove our main theorem in characteristic $p > 0$ in full generality.  
First we show that various images are compatible with the $\Phi$ from Example \ref{ex:TraceForFrobenius}, and so they contain the test ideal.


\begin{proposition}
\label{prop.EasyContainmentViaAlteration}
Suppose that $f : Y \to X$ is a proper dominant generically finite map of normal $F$-finite varieties and that $(X, \Delta)$ is a log-$\bQ$-Gorenstein pair. Then the test ideal is contained in the image of the trace map, \textit{i.e.}
\[
\tau(X; \Delta) \subseteq \Image\left( f_* \omega_Y( -\lfloor f^*(K_X + \Delta) \rfloor) \to[\tr_{f}] \omega_X(-\lfloor K_X + \Delta \rfloor) \subseteq K(X) \right)
\]
where $\tr_{f}$ is the map induced by trace as in \autoref{prop.TraceInSomeGenerality}.
\end{proposition}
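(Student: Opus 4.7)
The plan is to verify that the image $M \colonequals \Image\bigl(f_*\omega_Y(-\lfloor f^*(K_X+\Delta)\rfloor) \xrightarrow{\ \tr_f\ } \omega_X(-\lfloor K_X+\Delta\rfloor)\bigr)$ is a non-zero submodule of $\omega_X(-\lfloor K_X+\Delta\rfloor)$ preserved by every local section of every $\sHom_{\O_X}(F^e_*\omega_X(\lceil (p^e-1)(K_X+\Delta)\rceil),\omega_X)$; the containment $\tau(X;\Delta) \subseteq M$ will then be immediate from the minimality characterization of $\tau(X;\Delta) = \tau(\omega_X;K_X+\Delta)$ furnished by \autoref{lem:testidealmodulecorresp}.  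Non-vanishing of $M$ is automatic from \autoref{prop.TraceInGenerality}.

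Working affine-locally, I would first reduce to the case $K_X+\Delta \geq 0$ by adding a sufficiently positive Cartier divisor $D$ to $\Delta$ and choosing an effective representative of the canonical divisor $K_X$ (both possible on a sufficiently small affine chart of a normal scheme); the twist identity of \autoref{def.NonEffectiveTestSubmodule} together with the evident analogous twist for $M$ shows this reduction is harmless.  After further shrinking, write $(p^e-1)(K_X+\Delta) = \Div_X(c)$ for some $c \in K(X)$ and $e > 0$ divisible by the index of $K_X+\Delta$.  Then \autoref{lem:tracepremultdivisorcomputation} tells us that every local $\phi \in \sHom_{\O_X}(F^e_*\omega_X(\lceil (p^e-1)(K_X+\Delta)\rceil),\omega_X)$ is an $F^e_*\O_X$-multiple of $\phi_X \colonequals \Phi_X^e(F^e_* c \cdot \mydot)$, so that it is enough to check $\phi_X(F^e_* M) \subseteq M$.

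On the $Y$ side, the compatibility of pullback of the Cartier divisor $(p^e-1)(K_X+\Delta)$ with the divisor of the rational section $c$ yields $\Div_Y(c) = (p^e-1)f^*(K_X+\Delta)$, so that $\phi_Y \colonequals \Phi_Y^e(F^e_* c \cdot \mydot)$ is a well-defined local section of $\sHom_{\O_Y}(F^e_*\omega_Y((p^e-1)f^*(K_X+\Delta)),\omega_Y)$.  Since $f^*(K_X+\Delta) \geq 0$ is effective, the standard observation recorded in \autoref{def.ParamTestSubmodule}, applied to $(Y, f^*(K_X+\Delta))$, yields $\phi_Y\bigl(F^e_*\omega_Y(-\lfloor f^*(K_X+\Delta)\rfloor)\bigr) \subseteq \omega_Y(-\lfloor f^*(K_X+\Delta)\rfloor)$.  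To conclude, given any $n \in f_*\omega_Y(-\lfloor f^*(K_X+\Delta)\rfloor)$, the $\O_X$-linearity of $\tr_f$ (pulling the rational function $c \in K(X) \subseteq K(Y)$ through the trace) together with the iterated Frobenius-trace compatibility square of \autoref{prop.EasyContainmentgeneral} give
\[
\phi_X\bigl(F^e_*\tr_f(n)\bigr) = \Phi_X^e\bigl(F^e_*\tr_f(c \cdot n)\bigr) = \tr_f\bigl(\Phi_Y^e(F^e_*(c \cdot n))\bigr) = \tr_f\bigl(\phi_Y(F^e_* n)\bigr) \in M,
\]
which is the required Frobenius-stability of $M$.

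The principal obstacle will be the divisorial bookkeeping needed to reduce to the effective case and to identify $\phi_Y$ as an $\sHom$-section of the right form preserving $\omega_Y(-\lfloor f^*(K_X+\Delta)\rfloor)$; once these technicalities are settled, the core argument is a direct twist of the Frobenius-trace compatibility already established in \autoref{prop.EasyContainmentgeneral}.
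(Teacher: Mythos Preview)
Your approach is correct but differs from the paper's proof of this proposition.  The paper argues via Stein factorization: writing $f = h \circ g$ with $g \: Y \to Z$ birational and $h \: Z \to X$ finite, it invokes the known transformation rule $\tr_h(h_*\tau(Z;\Delta_Z)) = \tau(X;\Delta)$ for finite maps from \cite{SchwedeTuckerTestIdealFiniteMaps}, together with the standard containment $\tau(Z;\Delta_Z) \subseteq g_*\omega_Y(-\lfloor f^*(K_X+\Delta)\rfloor)$ coming from the ``test ideal contained in multiplier ideal'' argument.  Your proof instead verifies Frobenius-stability of the image $M$ directly, which is precisely the strategy the paper uses for the more general \autoref{prop.EasyContainmentViaGeneralMaps} immediately afterward.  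In effect you have reproduced the proof of the generalization, specialized back to the generically finite case; what you gain is self-containment (no appeal to the external finite-map transformation rule), at the cost of the divisorial bookkeeping you flag.

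One technical point deserves care: you fix a single $e$ with $(p^e-1)(K_X+\Delta)$ principal and check stability only under the generator $\phi_X$ for that $e$, implicitly relying on the fact that $\tau(\omega_X;K_X+\Delta)$ is already the smallest nonzero submodule stable under this one map.  This is true when the index of $K_X+\Delta$ is prime to $p$, but needs justification, and a workaround when $p$ divides the index.  The paper's proof of \autoref{prop.EasyContainmentViaGeneralMaps} handles this by fixing an \emph{arbitrary} $\phi \in \sHom_{\O_X}(F^e_*\O_X(\lceil(p^e-1)\Delta\rceil),\O_X)$, passing to its associated divisor $\Delta_\phi \geq \Delta$ (for which $(p^e-1)(K_X+\Delta_\phi)$ is automatically principal), and running your computation with $c$ chosen for $\Delta_\phi$ rather than for $\Delta$.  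With that adjustment your argument goes through for every $e$ and every $\phi$, as required by \autoref{def.ParamTestSubmodule}.
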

\begin{proof}
This follows easily by Stein factorization.  Factor $f$ as $Y \xrightarrow{g} Z \xrightarrow{h} X$ where $g$ is birational and $h$ is finite.
Then it follows from \cite[Theorem 6.25]{SchwedeTuckerTestIdealFiniteMaps} (\cf \cite[Lemma 4.4(a)]{SchwedeTuckerNonprincipal}) that
\[
\tr_h(h_* \tau(Z; -K_Z + h^*(K_X + \Delta))) = \tau(X; \Delta).
\]
On the other hand, it follows from the argument that the test ideal is contained in the multiplier ideal (since the test ideal is the unique smallest ideal satisfying a certain property), that
\[
\tr_g(g_* \omega_Y(-\lfloor f^*(K_X + \Delta) \rfloor)) \supseteq \tau(Z; -K_Z + h^*(K_X + \Delta)).
\]
See \cite{TakagiInterpretationOfMultiplierIdeals} or see \cite[Theorem 4.17]{SchwedeTuckerTestIdealSurvey} for a sketch of a simpler version of this argument (or see immediately below for a generalization).

This completes the proof since $\tr_h \circ h_* \tr_g = \tr_f$.
\end{proof}

We also prove a more general statement whose proof also partially generalizes \autoref{prop.EasyContainmentgeneral}, and which uses heavily the material from the preliminary section on Duality, \autoref{sec.Duality}.  The reader who has so far avoided that section, can skip \autoref{prop.EasyContainmentViaGeneralMaps} and rely on \autoref{prop.EasyContainmentViaAlteration} instead.

\begin{proposition}
\label{prop.EasyContainmentViaGeneralMaps}
Suppose that $f : Y \to X$ is a proper dominant map of normal integral $F$-finite schemes and that $(X, \Delta)$ is a log-$\bQ$-Gorenstein pair. Then the test ideal is contained in the image of the trace map, \textit{i.e.}
\[
\tau(X; \Delta) \subseteq \Image\left(\myH^{\dim Y - \dim X} \myR f_* \omega_Y( -\lfloor f^*(K_X + \Delta) \rfloor) \to[\tr_{f}] \omega_X(-\lfloor K_X + \Delta \rfloor) \subseteq K(X) \right)
\]
where $\tr_{f}$ is the map induced by trace as in \autoref{prop.TraceInGenerality}.  Similarly, if additionally $f^*(K_X + \Delta)$ is a Cartier divisor, then
\[
\tau(X;\Delta) \subseteq \Image\left(\myH^{- \dim X} \myR f_* \omega_Y^{\mydot}(f^*(K_X + \Delta)) \to[\tr_{f^{\mydot}}] \omega_X(-\lfloor K_X + \Delta \rfloor) \subseteq K(X) \right)
\]
where again $\tr_{f^{\mydot}}$ is the map induced by trace as in \autoref{prop.TraceInGenerality}.
\end{proposition}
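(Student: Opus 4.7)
The plan is to mirror the proof of \autoref{prop.EasyContainmentgeneral}, but to incorporate a $\Delta$-twist by means of \autoref{lem:tracepremultdivisorcomputation}. Since $K_X + \Delta$ is $\bQ$-Cartier, I would begin by choosing $e > 0$ and a nonzero $c \in K(X)$ with $(p^e - 1)(K_X + \Delta) = \divisor(c)$. By \autoref{lem:tracepremultdivisorcomputation}, the map $\phi := \Phi_X^e(F^e_* c \cdot \blank)$ generates $\sHom_{\O_X}(F^e_*\O_X(\lceil(p^e-1)\Delta\rceil),\O_X)$ as an $F^e_*\O_X$-module. Pulling back along the dominant $f$ gives $\divisor(f^*c) = (p^e-1)f^*(K_X+\Delta)$ on $Y$, so the analogous construction produces $\phi_Y := \Phi_Y^e(F^e_*(f^*c)\cdot\blank)$ which defines a map
\[
\phi_Y \: F^e_*\omega_Y(-\lfloor f^*(K_X+\Delta)\rfloor) \to \omega_Y(-\lfloor f^*(K_X+\Delta)\rfloor).
\]

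Next, I would produce the twisted analog of the diagram \eqref{eq:ultimatetracecommutes}. The Frobenius square for $f$ yields the untwisted commutative diagram \eqref{eq:ultimatetracecommutes}; the key observation is that multiplying the $X$-column by $c$ and the $Y$-column by $f^*c$ (which is literally the pullback of $c$) is compatible with every arrow --- the $f$-trace intertwines with $c$-multiplication via the projection formula, and the Frobenius-trace intertwines with $c$-multiplication by the very definition of $\phi$ and $\phi_Y$. Combining, one obtains the commutative square
\[
\xymatrix@C=10pt{
\myH^{\dim Y - \dim X}\myR f_*\omega_Y(-\lfloor f^*(K_X+\Delta)\rfloor) \ar[r]^-{\tr_f} & \omega_X(-\lfloor K_X + \Delta\rfloor) \\
\myH^{\dim Y - \dim X}F^e_*\myR f_*\omega_Y(-\lfloor f^*(K_X+\Delta)\rfloor) \ar[u]^-{\myH^{\dim Y-\dim X}\myR f_*\phi_Y} \ar[r]_-{F^e_*\tr_f} & F^e_*\omega_X(-\lfloor K_X + \Delta\rfloor) \ar[u]_-{\phi}
}
\]
whose horizontal arrows are the twisted trace from \autoref{prop.TraceInGenerality}.

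Now set $J := \Image(\tr_f) \subseteq \omega_X(-\lfloor K_X+\Delta\rfloor)$. The commutativity above gives $\phi(F^e_*J) \subseteq J$. Using \autoref{rem.HowToEmbedOmegaXKX} to identify $J$ with a submodule of $\O_X(-\lfloor \Delta\rfloor) \subseteq \O_X$ (and correspondingly $\phi$ with a generator of $\sHom_{\O_X}(F^e_*\O_X(\lceil(p^e-1)\Delta\rceil),\O_X)$), iterated compositions $\phi \circ F^e_*\phi \circ \cdots \circ F^{(n-1)e}_*\phi$ give stability of $J$ under $\sHom_{\O_X}(F^{ne}_*\O_X(\lceil(p^{ne}-1)\Delta\rceil),\O_X)$ for all $n \geq 1$. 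By \autoref{prop.TraceInGenerality}, $J$ is nonzero, and under the $\bQ$-Cartier hypothesis this cofinal family of $\sHom$-modules is known to characterize $\tau(X;\Delta)$ as the smallest nonzero submodule it preserves (compare \cite{SchwedeTestIdealsInNonQGor,BlickleTestIdealsViaAlgebras}); the containment $\tau(X;\Delta) \subseteq J$ follows.

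The dualizing-complex statement is proved by the same argument, starting from the Frobenius-trace square for the full dualizing complexes (i.e.\ from \eqref{eq.traceFrob} directly, rather than after passing to the canonical module as in \eqref{eq:ultimatetracecommutes}); the additional hypothesis that $f^*(K_X+\Delta)$ be Cartier is exactly what is needed to interpret $\omega_Y^{\mydot}(-\lfloor f^*(K_X+\Delta)\rfloor)$ via the projection formula. The main obstacle I anticipate is establishing the twisted commutative square rigorously: once one unwinds the identifications, the required compatibilities reduce to the tautology that $f^*$ of multiplication-by-$c$ is multiplication-by-$f^*c$ and that the Frobenius on $Y$ lifts that on $X$, but the bookkeeping of divisor shifts and the passage between $\omega_Y$-twists and $\O_Y$-twists must be handled with care.
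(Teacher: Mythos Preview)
Your approach mirrors the paper's closely: both arguments show the image $J$ of the twisted trace is stable under the relevant $p^{-e}$-linear maps by combining the Frobenius--trace commutativity \eqref{eq:ultimatetracecommutes} with the divisor identity coming from \autoref{lem:tracepremultdivisorcomputation}, and then invoke the minimality defining $\tau(X;\Delta)$.

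There is, however, a genuine gap in your opening step. You write ``choose $e>0$ and $c\in K(X)$ with $(p^e-1)(K_X+\Delta)=\divisor(c)$,'' but this is possible only when the index of $K_X+\Delta$ is prime to $p$; the proposition makes no such assumption. The paper handles this by working not with a single distinguished $\phi$ attached to $\Delta$, but with an arbitrary local section $\phi\in\sHom_{\O_X}(F^e_*\O_X(\lceil (p^e-1)\Delta\rceil),\O_X)$. Any such $\phi$ corresponds to a divisor $\Delta_\phi\geq\Delta$ with $(p^e-1)(K_X+\Delta_\phi)\sim 0$, and one takes $c$ with $\divisor(c)=(p^e-1)(K_X+\Delta_\phi)$. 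The divisor computation on $Y$ then uses $\Delta_\phi\geq\Delta$ to obtain
\[
c\cdot\omega_Y(-\lfloor f^*(K_X+\Delta)\rfloor)\subseteq \omega_Y(-\lfloor p^e f^*(K_X+\Delta)\rfloor),
\]
after which \autoref{prop.TraceInGenerality} applied to $F^e$ gives $\phi(F^e_*J)\subseteq J$. Since $\tau(X;\Delta)$ is by \autoref{def.TestIdeal} the smallest nonzero ideal stable under \emph{all} such $\phi$, the containment follows. Your iteration-and-cofinality step is unnecessary once you argue this way.

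For the second assertion the paper does not repeat the argument from \eqref{eq.traceFrob}; it simply observes that $\Image(\tr_{f^\mydot})\supseteq\Image(\tr_f)$, so the first containment immediately gives the second.
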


\begin{proof}
The statement is local (if it fails to hold, then it fails to hold locally), so we assume that $X$ is the spectrum of a local ring.
Without loss of generality, as in \autoref{def.TestIdeal}, we may assume that $\Delta \geq 0$. Let $n = \dim Y - \dim X$.

Fix an $\O_{X}$-linear map $\phi \: F^e_* \O_X \to \O_X$ such that $\Delta_{\phi} \geq \Delta$, where $\Delta_{\phi}$ is defined as in \cite[Theorem 3.11, 3.13]{SchwedeFAdjunction} or \cite[Subsection 4.4]{SchwedeTuckerTestIdealSurvey}.  Recall that $(p^e-1)(K_X + \Delta_{\phi}) \sim 0$, so in particular we may write $(p^{e} - 1)(K_{X}+ \Delta) = \Div c$ for some $c \in K(X)$.  For the first statement, it is sufficient to show that
\[
\phi\big( F^{e}_{*}\tr_{f}\left(\myH^n \myR f_* \omega_Y( -\lfloor f^*(K_X + \Delta) \rfloor) \right) \big) \subseteq \tr_{f}\big(\myH^{n} \myR f_* \omega_Y( -\lfloor f^*(K_X + \Delta) \rfloor) \big)
\]
and by \autoref{lem:tracepremultdivisorcomputation}, we may assume $\phi(\blank) = \Phi_{X}^{e}(F^{e}_{*}c \cdot \blank)$.  Now we have
\[
\phi(F^{e}_{*}\tr_{f}(\blank)) = \Phi_{X}^{e}\left( (F^{e}_{*}c) \cdot F^{e}_{*}\tr_{f}(\blank)\right) = \Phi_{X}^{e}(F^{e}_{*}\tr_{f}(c \cdot \blank)) = \tr_{f}(\myH^{n}\myR f_{*}\Phi_{Y}^{e}(F^e_* c \cdot \blank))
\]
where we have used that $\tr_{f}$ is $\O_{X}$-linear, and that $\Phi_{X}^{e}(F^{e}_{*}\tr_{f}(\blank)) = \tr_{f}(\myH^{n}\myR f_{*}\Phi_{Y}^{e}(F^e_* \blank))$ as shown in \eqref{eq:ultimatetracecommutes}.  Thus, it is enough to show
\[
\Phi_{Y}^{e}\big(F^e_* ( c \cdot \omega_{Y}( - \lfloor f^{*}(K_{X} + \Delta)\rfloor) )\big)\subseteq \omega_{Y}( - \lfloor f^{*}(K_{X} + \Delta)\rfloor) \, \, .
\]
Since
\[
- \lfloor f^{*}(K_{X} + \Delta) \rfloor + (1-p^{e})f^{*}(K_{X} + \Delta_{\phi}) = - \lfloor f^{*}( p^{e}K_{X} + (p^{e} - 1) \Delta_{\phi} + \Delta )\rfloor
\]
and $\Delta_{\phi} \geq \Delta$, we have
\[
c \cdot \omega_{Y}(- \lfloor f^{*}(K_{X}+ \Delta)\rfloor) \subseteq \omega_{Y}(-\lfloor p^{e}f^{*}(K_{X} + \Delta) \rfloor) = \omega_{Y}(-\lfloor (F^{e})^{*}f^{*}(K_{X}+\Delta)\rfloor) \, \, .
\]
But then, according to \autoref{prop.TraceInGenerality}, we have
\[
\Phi_{Y}^{e}\left( F^{e}_{*}\omega_{Y}(-\lfloor (F^{e})^{*}f^{*}(K_{X}+\Delta)\rfloor) \right) \subseteq \omega_{Y}(- \lfloor  f^{*}(K_{X}+\Delta)\rfloor)
\]
which completes the proof of the first statement. For the second statement, simply notice that $\Image(\Tr_{f^{\mydot}}) \supseteq \Image(\Tr_{f})$.
\end{proof}

\begin{lemma}
\label{lem.ConstructiveDescriptionOfTestIdeals}
Suppose that $(X = \Spec R, \Gamma)$ is a pair such that $\Gamma = t \Div(g)$ for some $g \in R$ and $t \in \bQ_{\geq 0}$.  Fix $c \in R$ such that $R_c$ is regular and that $\Supp(\Gamma) = V(g) \subseteq V(c)$. Then there exists a power of $c^N$ of $c$ such that
\[
\tau(\omega_X; \Gamma) = \sum_{e \geq 0} \Phi^e( F^e_* c^N g^{\lceil t(p^e - 1) \rceil} \omega_R).
\]
where $\Phi_R^e : F^e_* \omega_R \to \omega_R$ denotes the trace of the $e$-iterated Frobenius, see \autoref{sec.Duality}.
\end{lemma}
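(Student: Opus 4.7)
My plan is to pick $N$ so that $c^N \omega_R \subseteq \tau(\omega_R; \Gamma)$, and then to establish both containments separately. The existence of such $N$ follows because $V(g) \subseteq V(c)$ forces $\Gamma|_{R_c} = 0$, so $\tau(\omega_R;\Gamma)_c = \omega_{R_c}$ (using that $R_c$ is regular, hence $F$-rational); the cokernel $\omega_R/\tau(\omega_R;\Gamma)$ is then supported on $V(c)$ and is annihilated by a sufficiently large power of $c$ (compare the standard existence of test elements as in \cite{BlickleBoeckleCartierModulesFiniteness,SchwedeTestIdealsInNonQGor}). Denote the claimed sum by $M$, and write $k_e := \lceil t(p^e-1) \rceil$. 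The key observation is that the map
\[
\psi_e := \Phi^e_R \circ (F^e_* g^{k_e} \cdot) \colon F^e_* \omega_R(k_e \Div g) \to \omega_R,
\]
well-defined because multiplication by $g^{k_e}$ carries $\omega_R(k_e \Div g)$ into $\omega_R$, lies in $\Hom_R(F^e_* \omega_R(\lceil (p^e-1) \Gamma \rceil), \omega_R)$. For the forward containment $M \subseteq \tau(\omega_R; \Gamma)$, the $\psi_e$-stability of the test module gives
\[
\Phi^e(F^e_* c^N g^{k_e} \omega_R) = \psi_e(F^e_* c^N \omega_R) \subseteq \psi_e(F^e_* \tau(\omega_R; \Gamma)) \subseteq \tau(\omega_R; \Gamma),
\]
and summing over $e$ yields $M \subseteq \tau(\omega_R; \Gamma)$.

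For the reverse containment $\tau(\omega_R; \Gamma) \subseteq M$, I would verify that $M$ satisfies the defining properties of $\tau$: non-vanishing (clear from the $e = 0$ term $c^N \omega_R \subseteq M$) and stability under every $\phi \in \Hom_R(F^{e'}_* \omega_R(k_{e'} \Div g), \omega_R)$ for all $e' \geq 0$. The heart of the argument is identifying this Hom module cleanly: combining Grothendieck duality for the finite Frobenius (so that $F^{e!} \omega_R \cong \omega_R$ in the setting of the paper) with the normality identity $\Hom_R(\omega_R, \omega_R) = R$ (valid as $R$ is a normal domain with reflexive $\omega_R$) and the Cartierness of $\Div g$ yields
\[
\Hom_R(F^{e'}_* \omega_R(k_{e'} \Div g), \omega_R) \cong F^{e'}_* \Hom_R(\O_R(k_{e'} \Div g), R) = F^{e'}_* g^{k_{e'}} R
\]
as $F^{e'}_* R$-modules, with $\psi_{e'}$ corresponding to the generator $F^{e'}_* g^{k_{e'}}$. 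Consequently, every such $\phi$ is of the form $\phi(F^{e'}_* \beta) = \Phi^{e'}(F^{e'}_* r g^{k_{e'}} \beta)$ for some $r \in R$.

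Given a generator $m = \Phi^e(F^e_* c^N g^{k_e} \alpha) \in M$, I would apply the projection formula for Frobenius together with \autoref{lem.CompatOfTraceBasic}(a) (which gives $\Phi^{e'} \circ F^{e'}_* \Phi^e = \Phi^{e+e'}$) to obtain
\[
\phi(F^{e'}_* m) = \Phi^{e+e'}\!\bigl( F^{e+e'}_* c^N g^{k_e + p^e k_{e'}} r^{p^e} \alpha \bigr).
\]
The elementary inequality $k_{e+e'} \leq k_e + p^e k_{e'}$, obtained from $\lceil a+b \rceil \leq \lceil a \rceil + \lceil b \rceil$ and the decomposition $t(p^{e+e'}-1) = t p^e(p^{e'}-1) + t(p^e - 1)$, allows factoring $g^{k_e + p^e k_{e'}} = g^{k_{e+e'}}\, g^{k_e + p^e k_{e'} - k_{e+e'}}$ with the excess exponent non-negative. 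Since $g^{k_e + p^e k_{e'} - k_{e+e'}} r^{p^e} \alpha \in \omega_R$, the resulting expression lies in $\Phi^{e+e'}(F^{e+e'}_* c^N g^{k_{e+e'}} \omega_R) \subseteq M$, which establishes stability and closes the argument. The principal obstacle is the duality identification above: once it is in place, the projection-formula calculation is essentially bookkeeping, but without it one would be forced into a subtler analysis on the regular locus $R_c$ and would have to arrange for a single power $c^N$ to uniformly absorb unbounded $c$-denominators as $e'$ grows.
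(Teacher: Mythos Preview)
Your approach via the minimality characterization of $\tau(\omega_R;\Gamma)$ is natural, but there is a genuine gap in the reverse containment when $\Div(g)$ is not reduced. The definition of $\tau(\omega_R;\Gamma)$ (Definition~\ref{def.ParTestSub}) requires stability under every local section of $\sHom_{\O_X}(F^{e'}_*\omega_X(\lceil (p^{e'}-1)\Gamma\rceil),\omega_X)$, \emph{not} of $\sHom_{\O_X}(F^{e'}_*\omega_X(k_{e'}\Div g),\omega_X)$. When $\Div(g)$ has a component of multiplicity $m>1$, one has the strict inequality $\lceil t(p^{e'}-1)\rceil\cdot m > \lceil t(p^{e'}-1)m\rceil$ for suitable $t,e'$, so $k_{e'}\Div g \gneq \lceil (p^{e'}-1)\Gamma\rceil$ and your Hom module is strictly smaller than the one appearing in the definition. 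Consequently your claim that every relevant $\phi$ has the form $\Phi^{e'}(F^{e'}_*rg^{k_{e'}}\cdot\,)$ for some $r\in R$ is false in general, and stability of $M$ under all required $\phi$ has not been established. Concretely, if one writes a general $\phi$ as $\Phi^{e'}(F^{e'}_*h\cdot\,)$ with $\Div h\geq\lceil(p^{e'}-1)\Gamma\rceil$, then your computation yields $\phi(F^{e'}_*m)=\Phi^{e+e'}(F^{e+e'}_*c^N h^{p^e}g^{k_e}\alpha)$, and one would need $h^{p^e}g^{k_e}\in g^{k_{e+e'}}R$; this fails because $p^e\lceil t(p^{e'}-1)m\rceil \geq \lceil t(p^{e'}-1)\rceil p^e m$ is not valid.

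The paper confronts exactly this subtlety. Rather than arguing via minimality, it starts from the known test-element formula $\tau(\omega_X;\Gamma)=\sum_{e\geq 0}\sum_\phi \phi(F^e_*c^n\omega_R)$ with $\phi$ ranging over the full Hom module, and then absorbs the discrepancy $k_e\Div g - \lceil(p^e-1)\Gamma\rceil$ (which is supported on $V(g)\subseteq V(c)$ and uniformly bounded in $e$) into an extra factor $c^k$, setting $N=n+k$. Your argument is correct and rather elegant when $\Div(g)$ is reduced, since then the two Hom modules coincide; but to handle the general case you would need either to route through the paper's bounded-discrepancy trick, or to replace $M$ by the a~priori larger module $\sum_e\Phi^e\!\big(F^e_*c^N\omega_R(-\lceil(p^e-1)\Gamma\rceil)\big)$ and then separately argue it is contained in the displayed sum.
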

\begin{proof}
By the usual theory of test elements (\textit{cf.} \cite[proof of Theorem 3.18]{SchwedeTestIdealsInNonQGor}), we have for some power $c^n$ of $c$ that
\[
\tau(\omega_X; \Gamma) = \sum_{e \geq 0} \sum_{\phi} \phi(F^e_* c^n \omega_R)
\]
where the inner sum ranges over all $\phi \in \Hom_{R}( F^e_* \omega_R(\lceil (p^e - 1) \Gamma \rceil) , \omega_R)$.   Furthermore, note it is harmless to replace $n$ by any larger integer $n + k$.  The reason the statement does not follow immediately is that, as $\Div(g)$ may not be reduced, we may have $\lceil t(p^e -1 ) \rceil \Div(g) \geq \lceil t(p^e - 1) \Div(g) \rceil$.  Choose $k$ such that $\Div(c^k) + (p^e - 1) \Gamma \geq \Div(g^{\lceil t(p^e - 1) \rceil})$ for all $e \geq 0$, and set $N = n + k$.

Now, the map $\psi(\blank) = \Phi^e_{R}(g^{\lceil t(p^e - 1) \rceil} \cdot \blank) \in \Hom_R(F^e_* \omega_{R}(\lceil (p^e - 1)\Gamma \rceil), \omega_{R})$ appears in the sum above.  This implies the containment $\supseteq$ for our desired equality.  Furthermore,
for any $\phi \in \Hom_R(F^e_* \omega_{R}(\lceil (p^e - 1)\Gamma \rceil), \omega_{R})$, it is clear then that $\phi(F^e_* c^{N} \omega_R) \subseteq F^e_* \Phi^e(F^e_* c^n g^{\lceil (p^e - 1) \rceil} \omega_R)$.  Thus, we have the containment $\subseteq$ as well.
\end{proof}

We now describe the transformation rule for the parameter test module under finite maps.

\begin{proposition} [\cf \cite{SchwedeTuckerTestIdealFiniteMaps}]
\label{prop.TransformationRuleOfTestIdeals}
Given a finite map $f \: Y \to X$ of normal $F$-finite integral schemes and a $\bQ$-Cartier $\bQ$-divisor $\Gamma$ on $X$, we have
\[
\Tr_f \big(f_* \tau(\omega_Y; f^* \Gamma) \big) = \tau(\omega_X; \Gamma) \, \, .
\]
\end{proposition}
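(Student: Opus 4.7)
My plan is to reduce the statement about parameter test submodules to the known transformation rule for test ideals under finite maps, which is the main result of \cite{SchwedeTuckerTestIdealFiniteMaps}. The statement is local on $X$, so I would first assume $X = \Spec R$ is affine and fix a rational section $s \in \omega_X$ as in Convention~\ref{rem.HowToEmbedOmegaXKX}; this gives a canonical divisor $K_X = \Div s$ and an inclusion $\omega_X \hookrightarrow K(X)$. Writing $\Gamma = K_X + \Delta$ for a uniquely determined $\bQ$-divisor $\Delta$, the hypothesis that $\Gamma$ is $\bQ$-Cartier means $(X,\Delta)$ is a log-$\bQ$-Gorenstein pair. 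Using the identification $f_* \omega_Y \cong \sHom_{\O_X}(f_*\O_Y, \omega_X)$ from Example~\ref{ex.traceForFinite}, there is a compatible choice of rational section $s_Y \in \omega_Y$, giving a canonical divisor $K_Y$ on $Y$ for which $f^*\Gamma = K_Y + \Delta_Y$ with $\Delta_Y \colonequals f^*(K_X + \Delta) - K_Y$.

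With these choices fixed, Lemma~\ref{lem:testidealmodulecorresp} identifies $\tau(\omega_X;\Gamma) = \tau(\omega_X; K_X + \Delta)$ with the test ideal $\tau(X;\Delta) \subseteq \O_X(-\lfloor \Delta \rfloor)$, and similarly $\tau(\omega_Y; f^*\Gamma)$ with $\tau(Y;\Delta_Y) \subseteq \O_Y(-\lfloor \Delta_Y \rfloor)$. Moreover, under these identifications the divisor-twisted trace of Proposition~\ref{prop.TraceInSomeGenerality},
\[
\Tr_f \: f_*\omega_Y(-\lfloor f^*(K_X+\Delta)\rfloor) \to \omega_X(-\lfloor K_X + \Delta \rfloor),
\]
is precisely the (field-trace induced) map $f_*\O_Y(-\lfloor \Delta_Y\rfloor) \to \O_X(-\lfloor \Delta \rfloor)$ that appears in \cite{SchwedeTuckerTestIdealFiniteMaps}. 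With both sides translated into the setting of test ideals, the desired equality
\[
\Tr_f(\tau(\omega_Y; f^*\Gamma)) = \tau(\omega_X; \Gamma)
\]
becomes the assertion $\Tr_f(f_*\tau(Y;\Delta_Y)) = \tau(X;\Delta)$, which is precisely the transformation rule for test ideals proved in \cite{SchwedeTuckerTestIdealFiniteMaps} (in the separable case; in the inseparable case the analogous statement there holds after the ramification divisor is reinterpreted via the Grothendieck trace, and this reinterpretation matches exactly the choice of $K_Y$ above).

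The main obstacle, and the place where care is required, is the compatibility of identifications: one must ensure that the two applications of Convention~\ref{rem.HowToEmbedOmegaXKX} on $X$ and on $Y$ are made with respect to rational sections $s$ and $s_Y$ for which the field-trace identification of $\Tr_f$ with a map of structure sheaves is literally the same (not merely agreeing up to a unit) as the trace appearing in \cite{SchwedeTuckerTestIdealFiniteMaps}. Assuming $f$ is separable this is transparent from $K_Y = f^* K_X + \Ram_f$; in general one takes $s_Y$ to be the image of $s$ under the natural inclusion $\omega_X \hookrightarrow f_*\omega_Y$ dual to $f^\#\: \O_X \to f_*\O_Y$, so that the Grothendieck trace gives back $s$. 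Once this bookkeeping is in place the result follows at once.
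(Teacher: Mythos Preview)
Your approach is correct but differs from the paper's. The paper gives a direct, self-contained argument: after reducing to the affine case with $\Gamma = t\Div(g)$, it invokes Lemma~\ref{lem.ConstructiveDescriptionOfTestIdeals} to express both $\tau(\omega_R;\Gamma)$ and $\tau(\omega_S;f^*\Gamma)$ as sums $\sum_{e\geq 0}\Phi^e(F^e_* c g^{\lceil t(p^e-1)\rceil}\,\cdot\,)$, and then uses the single identity $\Tr_f\circ f_*\Phi_S^e = \Phi_R^e\circ F^e_*\Tr_f$ (trace compatible with composition) to push $\Tr_f$ through term by term. No appeal to the test-ideal formulation or to Lemma~\ref{lem:testidealmodulecorresp} is made.

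Your route---translating to test ideals via Lemma~\ref{lem:testidealmodulecorresp} and then quoting the transformation rule from \cite{SchwedeTuckerTestIdealFiniteMaps}---is logically sound and in some sense the ``intended'' reading of the \emph{cf.} citation. The trade-off is that the paper's computation is short, uses only material already developed internally (Lemma~\ref{lem.ConstructiveDescriptionOfTestIdeals} and the trace compatibility from \eqref{eq:ultimatetracecommutes}), and sidesteps entirely the bookkeeping you flag as the main obstacle: matching the rational sections $s$, $s_Y$ and ensuring the two incarnations of $\Tr_f$ agree on the nose. Your argument is valid once that compatibility is pinned down, but the paper's proof never needs it.
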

\begin{proof}
Without loss of generality, we may assume that $X = \Spec R$ and $Y = \Spec S$ are affine and that $\Gamma = t \Div(g)$ for some $g \in R$.

Let $\Phi_R^e : F^e_* \omega_R \to \omega_R$  and $\Phi_S^e : F^e_* \omega_S \to \omega_S$ be the corresponding traces of Frobenius and set $J_S = \Image(\tr_{f} \: f_{*}\omega_S \to \omega_R) \subseteq \omega_{R}$. Now, \autoref{lem.ConstructiveDescriptionOfTestIdeals} above implies that there exists an element $c \in R$ such that both
\[
\tau(\omega_R; \Gamma) = \sum_{e \geq 0} \Phi_R^e(F^e_* c g^{\lceil t(p^e - 1) \rceil} J_S)
\]
\[
\tau(\omega_S; f^* \Gamma) = \sum_{e \geq 0} \Phi_S^e(F^e_* c g^{\lceil t(p^e - 1) \rceil} \omega_S) \, \, .
\]
The idea for the remainder of the proof is to apply $\Tr_f(\blank)$ to $\tau(\omega_S, f^* \Gamma)$, noting that $\Tr_f ( f_{*} \Phi_S^e( \blank)) = \Phi_R^e (   F^e_* \Tr_f( \blank))$ since trace is compatible with composition (shown precisely in \eqref{eq:ultimatetracecommutes}). Therefore,
\begin{align*}
 \Tr_f(f_* \tau(\omega_S; f^* \Gamma)) & =  \Tr_f\left( f_{*}\left(\sum_{e \geq 0} \Phi_S^e(F^e_* c g^{\lceil t(p^e - 1) \rceil} \omega_S) \right) \right) \\
& =  \sum_{e \geq 0} \Tr_f\left( f_{*}\Phi_S^e(F^e_* c g^{\lceil t(p^e - 1) \rceil} f_{*}\omega_S) \right) \\
& =  \sum_{e \geq 0} \Phi_R^e \left( F^e_* \Tr_f( c g^{\lceil t(p^e - 1) \rceil} f_{*}\omega_S) \right) \\
& =  \sum_{e \geq 0} \Phi_R^e (F^e_* c g^{\lceil t(p^e - 1) \rceil} J_{S})\\
& =  \tau(\omega_R; \Gamma)
\end{align*}
which completes the proof.
\end{proof}
To reduce the main theorem of this paper to \autoref{thm.CharOfParTestSubmodule}, we need a variant of the cyclic covering construction, \textit{cf.} \cite{TomariWatanabeNormalZrGradedRingsAndNormalCyclicCovers} or \cite[Section 2.4]{KollarMori}.
\begin{lemma}
\label{lem.PullBackQCartierViaSeparable}
Suppose that $X$ is a normal integral scheme and $\Gamma$ is a $\bQ$-Cartier $\bQ$-divisor on $X$.  Then there exists a finite \emph{separable} map $g \: W \to X$ from a normal integral scheme $W$ such that $g^* \Gamma$ is a Cartier divisor.
\end{lemma}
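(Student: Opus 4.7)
The plan is to adapt the standard cyclic covering construction (as in \cite{KollarMori}*{Section 2.4}) by introducing a linear perturbation that kills the inseparability when the characteristic $p$ divides the index of $\Gamma$. Let $n$ be the index of $\Gamma$, so that $n\Gamma$ is an integral Cartier divisor. In characteristic $0$, or when $\gcd(n,p)=1$, the ordinary degree-$n$ cyclic cover (locally defined by $T^n - f$, where $n\Gamma = \Div(f)$) already does the job: its derivative $nT^{n-1}$ is coprime to $T^n - f$ when $f\neq 0$, so it is separable, and on the cover $\Gamma = \Div(T)$ is principal.

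Assume now $\Char k = p > 0$ and $p \mid n$, and write $n = p^a m$ with $\gcd(m,p) = 1$. First, I would apply the standard cyclic cover construction to the $\bQ$-divisor $p^a\Gamma$, whose index is $m$. This yields a finite separable cover $g_1 \colon W_1 \to X$ of degree $m$ with $W_1$ normal integral and $g_1^*(p^a\Gamma)$ Cartier, so that $g_1^*\Gamma$ has index dividing $p^a$ on $W_1$. After replacing $X$ by $W_1$, we may therefore assume that $\Gamma$ itself has index $p^a$.

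For this remaining power-of-$p$ case, working locally on an affine open $\Spec R$ where $p^a\Gamma = \Div(f)$ for $f \in K(R)^\times$, I would replace the inseparable polynomial $T^{p^a} - f$ by the perturbed polynomial
\[
P(T) \;=\; T^{p^a} - fT - f,
\]
whose derivative in characteristic $p$ equals $p^a T^{p^a-1} - f = -f \neq 0$; thus $P(T)$ is separable over $K(R)$. Letting $W$ be the normalization of $X$ in $K(R)[T]/(P(T))$, the resulting map $g \colon W \to X$ is finite and separable. In the extension we have $T^{p^a} = f(T+1)$, and $T+1$ is a unit on $W$: indeed, a prime containing $T+1$ would force $T \equiv -1$, hence $T^{p^a} \equiv (-1)^{p^a} \neq 0$, contradicting $T^{p^a} = f(T+1) \equiv 0$ modulo that prime. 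Consequently $p^a \Div_W(T) = \Div_W(f) = g^*(p^a\Gamma)$, so $\Div_W(T) = g^*\Gamma$, which exhibits $g^*\Gamma$ as a principal, hence Cartier, divisor on $W$.

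The main technical obstacle will be the globalization of the local construction, together with verifying that the defining polynomial is irreducible so that $W$ is integral. The globalization follows the familiar cyclic-covering framework using the line bundle $\mathcal{O}_X(p^a\Gamma)$, with the linear correction term incorporated into the algebra structure. Irreducibility of $P(T)$ can be verified by an Eisenstein-type argument at any prime divisor along which $f$ vanishes to order one---which can always be arranged, possibly after a generic modification of the representative of the class of $p^a\Gamma$---so that the normalization of $R[T]/(P(T))$ is indeed a normal integral domain.
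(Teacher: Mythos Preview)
Your core idea---perturb $T^n - f$ to $T^n - fT - f$ so that the derivative becomes the nonzero constant $-f$ when $p \mid n$---is exactly the paper's. The paper streamlines your argument in two ways. First, it uses the single polynomial $x^n + fx + f$ for \emph{all} $n$ without splitting into the coprime-to-$p$ and $p$-power cases. Second, it avoids the irreducibility question entirely by simply choosing a root $\alpha$ of this polynomial lying in some separable field extension $L/K(R)$ and normalizing $R$ in $L$; integrality of $W$ is then automatic. This second point matters because your Eisenstein argument has a small gap: on a fixed affine chart the function $f$ with $\Div(f) = p^a\Gamma$ is determined up to a unit, so you cannot arrange order-one vanishing along some prime if every component of $p^a\Gamma$ appears with multiplicity $\geq 2$. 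Your verification that $T+1$ is a unit and the conclusion $g^*\Gamma = \Div_W(T)$ match the paper's argument essentially verbatim (up to signs).
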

\begin{proof}
We may assume that $X = \Spec R$ is affine and that $n \Gamma = \Div_X(f)$ for some non-zero non-unit $f \in R$.  We view $f \in K = K(R)$ and suppose that $\alpha$ is a root of the polynomial $x^n + fx + f$ in some separable finite field extension $L$ of $K$.
Let $S$ be the normalization of $R$ inside $L$ so that we have a module-finite inclusion $R \subseteq S$.  Set $\pi \: Y = \Spec S \to \Spec R = X$.  Further observe that $S$ contains $\alpha$ since $\alpha$ is integral over $R$.  Since $\alpha^n  = -f(\alpha + 1)$
we have $\alpha, \alpha+1 \in \sqrt{\langle \alpha + 1 \rangle}$, so that $\alpha + 1$ is a unit.
Thus, $n\Div_Y(\alpha) = \Div_Y(f) = \pi^* n \Gamma$, and so $\pi^* \Gamma = \Div_Y(\alpha)$ is Cartier as desired.
\end{proof}

\begin{theorem}
\label{thm.MainThm}
Given a normal integral $F$-finite scheme $X$ with a $\bQ$-divisor $\Delta$ such that $K_X + \Delta$ is $\bQ$-Cartier, there exists a finite separable map $f \: Y \to X$ from a normal $F$-finite integral scheme $Y$ such that
\begin{equation}
\label{eq.TestIdealEqualsFiniteImage}
\tau(X; \Delta) = \Image \left( f_* \omega_Y( -\lfloor f^*(K_X + \Delta) \rfloor) \xrightarrow{\Tr_f} K(X) \right).
\end{equation}
Alternatively, if $X$ is of finite type over a $F$-finite (respectively perfect) field, one may take $f : Y \to X$ to be a regular (respectively separable) alteration.
\end{theorem}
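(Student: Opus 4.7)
The plan is to reduce the general log-$\bQ$-Gorenstein case to the $F$-rational statement \autoref{thm.CharOfParTestSubmodule} by first trivializing $K_X + \Delta$ via a separable cyclic cover, and then transferring the result back using the transformation rule for test ideals under finite maps.

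First, using \autoref{lem.PullBackQCartierViaSeparable}, I choose a finite separable cover $g \colon W \to X$ from a normal integral $F$-finite scheme so that $g^{*}(K_X + \Delta)$ becomes an integral Cartier divisor on $W$. Because this pullback is Cartier, \autoref{lem.PullOutCartierForTestSubmod} (extended to non-effective divisors as in \autoref{def.NonEffectiveTestSubmodule}) yields
\[
\tau\bigl(\omega_W;\, g^{*}(K_X + \Delta)\bigr) \;=\; \tau(\omega_W) \otimes \O_W\bigl(-g^{*}(K_X + \Delta)\bigr).
\]
Next, I apply \autoref{thm.CharOfParTestSubmodule}(b) to $W$ to produce a finite separable map $h \colon Y \to W$ with $Y$ integral such that $\tau(\omega_W) = \Image(\Tr_h \colon h_* \omega_Y \to \omega_W)$. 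Normalizing $Y$ preserves both finiteness and separability and cannot enlarge this image, while part (a) of \autoref{thm.CharOfParTestSubmodule} keeps the image above $\tau(\omega_W)$; hence equality persists and we may assume $Y$ is a normal $F$-finite integral scheme.

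Setting $f \colonequals g \circ h$, the projection formula converts the twist by the invertible sheaf $\O_W(-g^{*}(K_X + \Delta))$ into
\[
\tau\bigl(\omega_W;\, g^{*}(K_X + \Delta)\bigr) \;=\; \Image\Bigl(h_* \omega_Y\bigl(-f^{*}(K_X + \Delta)\bigr) \to \omega_W\bigl(-g^{*}(K_X + \Delta)\bigr)\Bigr),
\]
where the floor brackets are superfluous because $f^{*}(K_X + \Delta)$ is already integral. Pushing this identity forward by $g$ and invoking the transformation rule \autoref{prop.TransformationRuleOfTestIdeals} for the finite map $g$, together with the identification $\tau(\omega_X; K_X + \Delta) = \tau(X;\Delta)$ from \autoref{lem:testidealmodulecorresp}, gives
\[
\tau(X;\Delta) \;=\; \Tr_g\bigl(g_* \tau(\omega_W;\, g^{*}(K_X + \Delta))\bigr).
\]
Compatibility of traces with composition (\autoref{lem.CompatOfTraceBasic}(a)), namely $\Tr_f = \Tr_g \circ g_* \Tr_h$, then identifies the right-hand side with the image of $\Tr_f$ applied to $f_*\omega_Y(-\lfloor f^{*}(K_X + \Delta) \rfloor)$, which is exactly \eqref{eq.TestIdealEqualsFiniteImage}.

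The hardest work has already been absorbed into earlier sections---the characterization \autoref{thm.CharOfParTestSubmodule} (via the equational lemma and Huneke--Lyubeznik's argument) and the finite-map transformation rule of \cite{SchwedeTuckerTestIdealFiniteMaps}. What remains here is the organizational step of eliminating the $\bQ$-Cartier obstruction via a separable cover so these tools apply directly, and the only mildly subtle point is checking that normalization of $Y$ is harmless (handled above). For the regular-alteration variant, when $X$ is of finite type over an $F$-finite field I simply substitute \autoref{thm.CharOfParTestSubmoduleCorollary} for \autoref{thm.CharOfParTestSubmodule}(b), producing a regular alteration $h \colon Y \to W$; the composition $f = g \circ h$ is then again a regular alteration, and over a perfect base field the same substitution yields a separable regular alteration.
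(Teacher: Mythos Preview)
Your proof is correct and follows essentially the same route as the paper's own argument: trivialize $K_X+\Delta$ by a separable cover (\autoref{lem.PullBackQCartierViaSeparable}), compute $\tau(\omega_W)$ on the cover via \autoref{thm.CharOfParTestSubmodule}, untwist with the projection formula, and push down using \autoref{prop.TransformationRuleOfTestIdeals} together with \autoref{lem:testidealmodulecorresp}. The only cosmetic difference is that for the alteration statement the paper takes the finite map first and then dominates by a regular alteration via de Jong, whereas you invoke \autoref{thm.CharOfParTestSubmoduleCorollary} directly on $W$; both amount to the same thing, and your explicit remark on normalizing $Y$ is a point the paper glosses over.
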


Before proving this theorem, we state several corollaries.

\begin{corollary}
\label{cor.TestIdealConstructedForManyAlterations}
Assume that $X$ is a normal variety over an $F$-finite field $k$. If $(X,\Delta)$ is a log-$\bQ$-Gorenstein pair, then using the images of the trace map as in \autoref{prop.TraceInGenerality}, we have
\begin{equation}
\label{eq.TestIdealIsBigIntersection}
\tau(X; \Delta) = \bigcap_{f : Y \to X} \Image\left(\xymatrix{ \myH^{\dim Y - \dim X} \myR f_* \omega_Y( -\lfloor f^*(K_X + \Delta) \rfloor) \xrightarrow{\Tr_f} K(X) }\right)
\end{equation}
where $f \: Y \to X$ ranges over all maps from a normal variety $Y$ that are either:
\begin{enumerate}
\item finite dominant maps
\item finite separable dominant maps
\item alterations (\ie generically finite proper dominant maps)
\item regular alterations
\item proper dominant maps
\item proper dominant maps from regular schemes
\end{enumerate}
or, if additionally $k$ is perfect,
\begin{enumerate}
\item[(g)] regular separable alterations.
\end{enumerate}
Furthermore, in all cases the intersection stabilizes (\ie is equal to one of its members).
\end{corollary}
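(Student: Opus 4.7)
The plan is to deduce both inclusions of \eqref{eq.TestIdealIsBigIntersection} directly from the two heavy results already in hand: the ``easy containment'' \autoref{prop.EasyContainmentViaGeneralMaps}, and the existence of a single realizing map given by \autoref{thm.MainThm}. Since every class (a)--(g) consists of proper dominant maps $f : Y \to X$ from normal $Y$, and since $(X,\Delta)$ is log-$\bQ$-Gorenstein, \autoref{prop.EasyContainmentViaGeneralMaps} applies uniformly and yields
\[
\tau(X;\Delta) \;\subseteq\; \Image\bigl(\,\myH^{\dim Y - \dim X}\myR f_* \omega_Y(-\lfloor f^*(K_X+\Delta)\rfloor) \xrightarrow{\tr_f} K(X)\bigr)
\]
for each such $f$. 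Intersecting over any subclass of maps preserves this containment, so one half of \eqref{eq.TestIdealIsBigIntersection} is immediate.

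For the reverse containment and for the stabilization, I would exhibit, in each class, a single map realizing the test ideal as its trace image. This is precisely what \autoref{thm.MainThm} provides. Concretely, the finite separable map $f_0 \: Y_0 \to X$ from \autoref{thm.MainThm} lies in classes (a), (b), (c), and (e) (any finite separable dominant morphism is in particular a finite dominant map, a finite separable dominant map, an alteration, and a proper dominant map), so the intersection in each of these cases is sandwiched between $\tau(X;\Delta)$ and $\Image(\tr_{f_0}) = \tau(X;\Delta)$, proving equality and stabilization simultaneously. For classes (d) and (f), the ``alternatively'' clause of \autoref{thm.MainThm} (valid over an $F$-finite base) produces a regular alteration realizing $\tau(X;\Delta)$, and any regular alteration is in particular a proper dominant map from a regular scheme. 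Finally, class (g) is handled by the perfect-base version of the same clause of \autoref{thm.MainThm}, which directly yields a regular separable alteration realizing $\tau(X;\Delta)$.

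Putting these together, in each of the seven settings one has
\[
\tau(X;\Delta) \;\subseteq\; \bigcap_f \Image(\tr_f) \;\subseteq\; \Image(\tr_{f_0}) \;=\; \tau(X;\Delta),
\]
so equality holds and the intersection stabilizes at the single map $f_0$ supplied by \autoref{thm.MainThm}. There is no genuine obstacle here: the entire proof is an unpacking of the two quoted results, the only thing to check being that the realizing map $f_0$ in each setting falls into the specified class, which is immediate. The hardest part, if any, is purely bookkeeping: making sure that in cases like (d) and (f) one invokes the $F$-finite (rather than only perfect) version of the existence clause in \autoref{thm.MainThm}, and in (g) the perfect-base version; and that \autoref{prop.EasyContainmentViaGeneralMaps} is formulated generally enough to cover every listed class, which it is.
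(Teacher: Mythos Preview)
Your proposal is correct and is essentially the paper's own argument: one containment from \autoref{prop.EasyContainmentViaGeneralMaps}, the reverse containment and stabilization from the realizing map in \autoref{thm.MainThm}. Your write-up is just more explicit about checking that the map supplied by \autoref{thm.MainThm} lands in each of the classes (a)--(g), which the paper leaves implicit.
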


\begin{corollary}
\label{cor.TestIdealConstructedForProperDominantMaps}
Assume $X$ is a normal integral $F$-finite scheme with a $\bQ$-divisor $\Delta$ such that $K_X + \Delta$ is $\bQ$-Cartier.
Then using the images of the trace map as in \autoref{prop.TraceInGenerality}, we have
\begin{equation}
\label{eq.TestIdealEqualsComplexImage}
\tau(X; \Delta) = \bigcap_{f : Y \to X} \Image\left(\xymatrix{ \myH^{- \dim X} \myR f_* \omega_Y^{\mydot} ( -\lfloor f^*(K_X + \Delta) \rfloor) \xrightarrow{\Tr_{f^{\mydot}}} K(X) }\right)
\end{equation}
where the intersection runs over all proper dominant maps $f \: Y \to X$ from a normal integral scheme $Y$ such that $f^* (K_X + \Delta)$ is Cartier.  Furthermore, once again, the intersection stabilizes.
\end{corollary}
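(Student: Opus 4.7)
The easy inclusion $\tau(X;\Delta) \subseteq \Image(\tr_{f^{\mydot}})$ for every admissible $f : Y \to X$ is immediate from the second statement of \autoref{prop.EasyContainmentViaGeneralMaps}, so $\tau(X;\Delta)$ is contained in the intersection in \eqref{eq.TestIdealEqualsComplexImage}. My plan for the reverse inclusion, and for the stabilization assertion, is to exhibit a single \emph{finite separable} $f : W \to X$ at which equality holds; for such an $f$, the exactness of $f_{*}$ identifies $\Image(\tr_{f^{\mydot}})$ with $\Image(\tr_{f})$, thereby reducing matters to \autoref{thm.MainThm}.

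Concretely, first invoke \autoref{thm.MainThm} to obtain a finite separable $g : Z \to X$ from a normal integral $Z$ with
\[
\Image\Bigl(g_{*}\omega_{Z}(-\lfloor g^{*}(K_{X}+\Delta)\rfloor) \xrightarrow{\tr_{g}} K(X)\Bigr) = \tau(X;\Delta).
\]
Since $g^{*}(K_{X}+\Delta)$ is $\bQ$-Cartier on $Z$, \autoref{lem.PullBackQCartierViaSeparable} furnishes a finite separable $h : W \to Z$ with $W$ normal integral such that $h^{*}g^{*}(K_{X}+\Delta)$ is Cartier. Let $f \colonequals g \circ h$: then $f$ is finite separable and $f^{*}(K_{X}+\Delta)$ is Cartier, so $f$ is admissible for the intersection. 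By compatibility of trace with composition, $\tr_{f} = \tr_{g} \circ (g_{*}\tr_{h})$, and since $\Image(\tr_{h}) \subseteq \omega_{Z}(-\lfloor g^{*}(K_{X}+\Delta)\rfloor)$, one computes
\[
\Image(\tr_{f}) = \tr_{g}\bigl(g_{*}\Image(\tr_{h})\bigr) \subseteq \tr_{g}\bigl(g_{*}\omega_{Z}(-\lfloor g^{*}(K_{X}+\Delta)\rfloor)\bigr) = \tau(X;\Delta).
\]
Combined with the reverse containment $\tau(X;\Delta) \subseteq \Image(\tr_{f})$ coming from \autoref{prop.EasyContainmentViaGeneralMaps}, this yields $\Image(\tr_{f}) = \tau(X;\Delta)$.

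Finally, because $f$ is finite, $\myR f_{*} = f_{*}$ is exact and $\dim W = \dim X$, so taking $\myH^{-\dim X}$ commutes with $f_{*}$ and sends $\omega_{W}^{\mydot}(-\lfloor f^{*}(K_{X}+\Delta)\rfloor)$ to $f_{*}\omega_{W}(-\lfloor f^{*}(K_{X}+\Delta)\rfloor)$ (note $f^{*}(K_{X}+\Delta)$ being Cartier makes this twist a line-bundle twist); under this identification $\tr_{f^{\mydot}}$ agrees with $\tr_{f}$. Hence $\Image(\tr_{f^{\mydot}}) = \tau(X;\Delta)$, giving both the reverse inclusion and the claimed stabilization of the intersection at this $f$. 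The only real subtlety is that for non-Cohen-Macaulay $Y$ the image of $\tr_{f^{\mydot}}$ can be strictly larger than that of $\tr_{f}$, and since regular alterations are not available in the general $F$-finite setting, one cannot simply appeal to the regular-alteration form of the Main Theorem; restricting to a finite $f$ collapses this distinction entirely.
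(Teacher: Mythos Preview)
Your proof is correct and follows essentially the same strategy as the paper: both use \autoref{prop.EasyContainmentViaGeneralMaps} for the containment $\tau(X;\Delta)\subseteq$, and both reduce the reverse containment to a single finite map from \autoref{thm.MainThm} at which $\Image(\tr_{f^{\mydot}})=\Image(\tr_f)$. The only minor differences are (i) you explicitly post-compose with a cover from \autoref{lem.PullBackQCartierViaSeparable} to guarantee $f^{*}(K_X+\Delta)$ is Cartier, whereas the paper tacitly uses that the map built in the proof of \autoref{thm.MainThm} already has this property; and (ii) you identify $\Image(\tr_{f^{\mydot}})$ with $\Image(\tr_f)$ via exactness of $f_{*}$ for finite $f$, while the paper invokes the spectral sequence argument from the proof of \autoref{thm.TraceNonzero}, which works for any generically finite map but is unnecessary here since the map is already finite.
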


\begin{proof}[Proof of Theorem \ref{thm.MainThm}]
We make use the identification of $\tau(X; \Delta) = \tau(\omega_X; K_X + \Delta)$ from \autoref{lem:testidealmodulecorresp} and will prove the statement for the latter.
For \eqref{eq.TestIdealEqualsFiniteImage}, first take a separable finite map $f' \: X' \to X$ such that $\Gamma := f'^* (K_X + \Delta)$ is Cartier by \autoref{lem.PullBackQCartierViaSeparable}.  By \autoref{prop.TransformationRuleOfTestIdeals},
\[
\Tr_{f'}\left(f'_* \tau(\omega_{X'}; \Gamma) \right) = \tau(\omega_X; K_X + \Delta) = \tau(X; \Delta).
\]
Now, using \autoref{thm.CharOfParTestSubmodule} we may fix a separable finite map $h \: Y \to X'$ such that we have
$\tau(\omega_{X'}) = \Image( \tr_{h} \: h_{*}\omega_{Y} \to \omega_{X'})$.
The projection formula then gives the equality
$\tau(\omega_{X'}; \Gamma) = \Image(\Tr_{h}(h_{*}\omega_Y(-h^* \Gamma) \to \omega_{X'})$, and \eqref{eq.TestIdealEqualsFiniteImage} now follows after applying $\Tr_{f'}$.

For the remaining statement when $X$, if we are given a composition of alterations $f \circ g : Z \xrightarrow{g} Y \xrightarrow{f} X$, it follows that
\[
\begin{array}{l}
\Image( f_* g_* \omega_Z( -\lfloor g^*f^*(K_X + \Delta) \rfloor ) \to[\Tr_{f \circ g}] K(X) ) \\ \qquad \qquad \qquad \qquad \subseteq \Image( f_* \omega_Y( -\lfloor f^*(K_X + \Delta) \rfloor) \to[\tr_{f}] K(X)) \, \, .
\end{array}
\]
Thus, the second statement immediately follows from the first by taking a further regular (separable) alteration using \cite[Theorem 4.1]{de_jong_smoothness_1996}.
\end{proof}

\begin{proof}[Proof of Corollary \ref{cor.TestIdealConstructedForManyAlterations}]
For equation \eqref{eq.TestIdealIsBigIntersection}, we have the containment $\tau(X; \Delta) \subseteq$ from \autoref{prop.EasyContainmentViaAlteration} or \autoref{prop.EasyContainmentViaGeneralMaps}.  The result then follows from equation \eqref{eq.TestIdealEqualsFiniteImage}.
\end{proof}

\begin{proof}[Proof of Corollary \ref{cor.TestIdealConstructedForProperDominantMaps}]
Finally, for equation \eqref{eq.TestIdealEqualsComplexImage} we still have the containment $\tau(X; \Delta) \subseteq $ from \autoref{prop.EasyContainmentViaGeneralMaps}.  On the other hand, if $Y$ has the same dimension as $X$, then it is readily seen from the spectral sequence argument used in the proof of \autoref{thm.TraceNonzero} that
\[
\Tr_{f^{\mydot}}\left( \myH^{- \dim X} \myR f_* \omega_Y^{\mydot} ( -\lfloor f^*(K_X + \Delta) \rfloor) \right) = \Tr_{f} \left( \myH^{\dim Y - \dim X} \myR f_* \omega_Y( -\lfloor f^*(K_X + \Delta) \rfloor) \right) \, \, .
\]
so that equality holds in \eqref{eq.TestIdealEqualsComplexImage} for $f \: Y \to X$ as in \autoref{thm.MainThm}.
\end{proof}

\begin{remark}
If $R$ is an $F$-finite $\bQ$-Gorenstein splinter ring (i.e. any module-finite extension $R \subseteq S$ splits as a map of $R$-modules), then \autoref{cor.TestIdealConstructedForManyAlterations} above gives that $\tau(X) = \O_X$, implying that $R$ is strongly $F$-regular (since by $\tau(X)$ we always mean the big test ideal).  This recovers the main result of \cite{SinghQGorensteinSplinterRings}.
\end{remark}

\section{Nadel-type vanishing up to finite maps}
\label{sec.NadelVanishing}
Among the most sorely missed tools in positive characteristic birational algebraic geometry (in comparison to characteristic zero) are powerful cohomology vanishing theorems. Strong additional assumptions (\textit{e.g.} lifting to the second Witt vectors) are required to recover the most basic version of Kodaira vanishing, and even under similar assumptions the most powerful variants (\textit{e.g.} Kawamata-Viehweg or Nadel-type vanishing) cannot be proven. By applying the results and ideas of B. Bhatt's dissertation (see also \cite{BhattDerivedDirectSummand}), we derive here variants of Nadel-type vanishing theorems. These are strictly weaker than what one would hope for as we only obtain the desired vanishing after a finite covering. Notably however, we need not require a W2 lifting hypothesis.

Before continuing, we recall the following well known Lemma.
\begin{lemma}[\cf \cite{BhattThesis}]
 Suppose that $X$ is a Noetherian scheme and that we have a map of objects $f : A^{\mydot} \to B^{\mydot}$ within $D^{\geq 0}_{\coherent}(X)$.  Then $f$ factors through $\myH^0 B^{\mydot}$ if and only if ${\boldsymbol{\tau}}_{> 0}(f)$ is the zero map.
\end{lemma}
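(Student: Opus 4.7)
The plan is to exploit the canonical truncation triangle on $B^\bullet$ together with the naturality of truncation applied to $f$. Since $B^\bullet \in D^{\geq 0}_{\coherent}(X)$, the canonical truncation triangle takes the clean form
\[
\myH^0 B^\bullet \to B^\bullet \xrightarrow{\ \beta\ } {\boldsymbol{\tau}}_{>0} B^\bullet \to \myH^0 B^\bullet[1]
\]
(with $\myH^0 B^\bullet$ viewed as a complex in degree $0$). Applying $\Hom_{D(X)}(A^\bullet,-)$ gives a long exact sequence which shows immediately that $f$ lifts through $\myH^0 B^\bullet \to B^\bullet$ if and only if $\beta\circ f = 0$. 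So the first step is to reduce the ``factoring'' condition to a vanishing statement about $\beta \circ f$.

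Next, functoriality of the truncation functor provides a commutative square
\[
\xymatrix{
A^\bullet \ar[r]^{f}\ar[d]_{\alpha} & B^\bullet \ar[d]^{\beta}\\
{\boldsymbol{\tau}}_{>0} A^\bullet \ar[r]^{{\boldsymbol{\tau}}_{>0}(f)} & {\boldsymbol{\tau}}_{>0} B^\bullet
}
\]
in which $\alpha$ and $\beta$ are the natural maps coming from the analogous truncation triangles for $A^\bullet$ and $B^\bullet$. Thus $\beta\circ f = {\boldsymbol{\tau}}_{>0}(f)\circ\alpha$, so one direction is already clear: if ${\boldsymbol{\tau}}_{>0}(f) = 0$, then $\beta \circ f = 0$ and $f$ factors through $\myH^0 B^\bullet$.

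For the converse, I would show that precomposition with $\alpha$ is \emph{injective} on $\Hom_{D(X)}({\boldsymbol{\tau}}_{>0}A^\bullet,\,{\boldsymbol{\tau}}_{>0}B^\bullet)$. For this apply $\Hom_{D(X)}(-,\,{\boldsymbol{\tau}}_{>0} B^\bullet)$ to the truncation triangle $\myH^0 A^\bullet \to A^\bullet \xrightarrow{\alpha} {\boldsymbol{\tau}}_{>0}A^\bullet$. The resulting long exact sequence exhibits the kernel of $\alpha^{*}$ as a quotient of $\Hom_{D(X)}(\myH^0 A^\bullet,\,{\boldsymbol{\tau}}_{>0}B^\bullet)$, which vanishes because $\myH^0 A^\bullet$ is concentrated in degree $0$ while ${\boldsymbol{\tau}}_{>0}B^\bullet \in D^{\geq 1}(X)$ (and $\Hom$ in the derived category between these is just the degree-$0$ Ext, which is zero by a trivial support-in-cohomological-degree computation). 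With this injectivity in hand, $\beta\circ f = {\boldsymbol{\tau}}_{>0}(f)\circ \alpha = 0$ forces ${\boldsymbol{\tau}}_{>0}(f) = 0$, completing the equivalence.

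There is no real obstacle; the only subtle point is the last vanishing, which requires the hypothesis $A^\bullet \in D^{\geq 0}$ in order to conclude that $\myH^0 A^\bullet$ really sits in a single non-negative degree (so that its Hom into a $D^{\geq 1}$-object is zero). Everything else is a formal manipulation of truncation triangles and naturality.
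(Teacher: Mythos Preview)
Your proof is correct and follows essentially the same route as the paper: both use the truncation triangle for $B^{\mydot}$ together with the naturality square for ${\boldsymbol{\tau}}_{>0}$ to reduce the factoring question to the vanishing of $\beta \circ f$. The paper dispatches the direction ``$f$ factors $\Rightarrow {\boldsymbol{\tau}}_{>0}(f)=0$'' in one line by observing ${\boldsymbol{\tau}}_{>0}(\myH^0 B^{\mydot})=0$, whereas you argue via injectivity of $\alpha^*$; note that your long exact sequence actually exhibits $\ker\alpha^*$ as a quotient of $\Hom(\myH^0 A^{\mydot}[1],\,{\boldsymbol{\tau}}_{>0} B^{\mydot})$ rather than $\Hom(\myH^0 A^{\mydot},\,{\boldsymbol{\tau}}_{>0} B^{\mydot})$, but the same degree consideration kills this group too.
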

\begin{proof}
 Certainly if $f$ factors through $\myH^0 B^{\mydot}$, then ${\boldsymbol{\tau}}_{> 0}(f) = 0$ since ${\boldsymbol{\tau}}_{> 0}(\myH^0 B^{\mydot}) = 0$.  For the converse direction, suppose ${\boldsymbol{\tau}}_{>0}(f) = 0$.
Consider the diagram
\[
 \xymatrix{
\myH^0 A^{\mydot} \ar[d] \ar[r] & A^{\mydot} \ar[r] \ar[d]  & {\boldsymbol{\tau}}_{>0} A^{\mydot} \ar[r]^-{+1} \ar[d]^0 &\\
\myH^0 B^{\mydot} 	 \ar[r] & B^{\mydot} \ar[r]         & {\boldsymbol{\tau}}_{>0} B^{\mydot} \ar[r]^-{+1}  &
}
\]
Since $A^{\mydot} \to B^{\mydot} \to {\boldsymbol{\tau}}_{>0} B^{\mydot}$ is zero, we also have the following diagram of objects in $D^{\geq 0}_{\coherent}(X)$
\[
 \xymatrix{
A^{\mydot} \ar@{.>}[d] \ar@{.>}[r]^{\id} & A^{\mydot} \ar[r] \ar[d]  & 0 \ar[r]^-{+1} \ar[d] &\\
\myH^0 B^{\mydot} 	 \ar[r] & B^{\mydot} \ar[r]         & {\boldsymbol{\tau}}_{>0} B^{\mydot} \ar[r]^-{+1}  &
} \, \, .
\]
\end{proof}

We begin with a lemma which can be viewed as a kind of ``Grauert-Riemenschneider vanishing \cite{GRVanishing} up to finite maps.'' Using the notation from below, note that in the special case where $W$ is smooth (or a tame quotient singularity) it has been shown only recently by A. Chatzistamatiou and K. R\"ulling \cite{ChatzistamatiouRullingHigherDirectImages} that $\myH^i \myR\pi_*\omega_X$ is zero for $i > 0$.
\begin{lemma}
\label{lem.GRVanishingUptoFiniteMaps}
 Suppose that $\pi \: X \to W$ is an alteration between integral schemes of characteristic $p > 0$.  Then there exists a finite map $g \: U \to X$ such that the trace map
\begin{equation}
\label{eq.GRDotCohomologyVanishing}
{\boldsymbol{\tau}}_{> -\dim X} \myR (\pi \circ g)_* \omega_U^{\mydot} \to {\boldsymbol{\tau}}_{> -\dim X} \myR \pi_* \omega_W^{\mydot}
\end{equation}
is zero and furthermore that the trace map
\begin{equation}
\label{eq.GRNotDotCohomologyVanishing}
{\boldsymbol{\tau}}_{> 0} \myR (\pi \circ g)_* \omega_U \to {\boldsymbol{\tau}}_{> 0} \myR \pi_* \omega_W
\end{equation}
is zero.  As a consequence $\myH^i \myR (\pi \circ g)_* \omega_U \to \myH^i \myR \pi_* \omega_X$ is zero for all $i > 0$.
\end{lemma}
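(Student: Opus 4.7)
Plan: The strategy is to reduce both vanishings, via Grothendieck-Serre duality, to a derived-category strengthening of the equational lemma \autoref{thmEquationalLemma} appearing in Bhatt's thesis (\cf \cite{BhattDerivedDirectSummand, BhattThesis}). This translates the question from pushforwards of the dualizing complex into one about pushforwards of the structure sheaf, where the Huneke-Lyubeznik-style killing of classes by finite covers is directly applicable.

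For (\ref{eq.GRDotCohomologyVanishing}), I would first apply Grothendieck duality to $\pi$ and to $\pi\circ g$. The identifications $\pi^!\omega_W^{\mydot} \cong \omega_X^{\mydot}$ and $(\pi\circ g)^!\omega_W^{\mydot}\cong \omega_U^{\mydot}$, together with the adjunction for proper maps, yield canonical isomorphisms
\[
   \myR\pi_*\omega_X^{\mydot} \cong \myR\sHom_W(\myR\pi_*\O_X,\omega_W^{\mydot})
\]
and analogously for $\pi\circ g$. Under these the trace map of the lemma is Grothendieck-Serre dual to the natural pullback $\myR\pi_*\O_X \to \myR(\pi\circ g)_*\O_U$. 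The key input from Bhatt asserts that, since $\pi$ is proper and $W$ is Noetherian (so each $\myR^j\pi_*\O_X$ is coherent and vanishes for $j$ outside a bounded range), there is a finite surjective $g\colon U\to X$ for which the composition
$\myR\pi_*\O_X \to \myR(\pi\circ g)_*\O_U \to {\boldsymbol{\tau}}_{>0}\myR(\pi\circ g)_*\O_U$ is zero in $D(W)$. This is proved by a noetherian induction analogous to the argument of \autoref{thm.CharOfParTestSubmodule}(b), applying \autoref{thmEquationalLemma} to the coherent sheaves $\myR^j\pi_*\O_X$ one supporting component at a time. Dualizing via the identifications above (and observing that $\dim W=\dim X$ matches the truncation ${\boldsymbol{\tau}}_{>0}$ on the structure-sheaf side with ${\boldsymbol{\tau}}_{>-\dim X}$ on the dualizing-complex side) then yields (\ref{eq.GRDotCohomologyVanishing}).

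For (\ref{eq.GRNotDotCohomologyVanishing}) and the consequence, I would exploit the natural map $\omega_U[\dim X] \to \omega_U^{\mydot}$, an isomorphism on $\myH^{-\dim X}$, together with its compatibility with the trace, giving the commutative square
\[
\xymatrix{
\myR(\pi\circ g)_*\omega_U[\dim X] \ar[r] \ar[d] & \myR\pi_*\omega_X[\dim X] \ar[d] \\
\myR(\pi\circ g)_*\omega_U^{\mydot} \ar[r] & \myR\pi_*\omega_X^{\mydot}.
}
\]
Taking $\myH^i$ for $i > -\dim X$ produces a map $\myR^{i+\dim X}(\pi\circ g)_*\omega_U \to \myR^{i+\dim X}\pi_*\omega_X$ whose composition with the right vertical map vanishes by (\ref{eq.GRDotCohomologyVanishing}). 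When $X$ is not Cohen-Macaulay, this right vertical map has nontrivial (coherent) kernel, but a further iteration of \autoref{thmEquationalLemma} applied to the finitely many relevant cohomology sheaves produces an additional finite cover of $U$ killing the surviving image; replacing $g$ by this composition then gives (\ref{eq.GRNotDotCohomologyVanishing}).

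The main obstacle is the upgrade of \autoref{thmEquationalLemma} — which kills one cohomology class at a time — to the statement that an entire truncation subcomplex of $\myR\pi_*\O_X$ can be annihilated by a single finite cover. This requires careful noetherian induction on the supports of the finitely many coherent sheaves $\myR^j\pi_*\O_X$ with $j>0$, and is the point at which Bhatt's thesis does the real work. A secondary subtlety is carefully matching truncations under Grothendieck-Serre duality and handling the passage from (\ref{eq.GRDotCohomologyVanishing}) to (\ref{eq.GRNotDotCohomologyVanishing}) in the non-Cohen-Macaulay case, where the cone of $\omega_X[\dim X]\to\omega_X^{\mydot}$ is nonzero.
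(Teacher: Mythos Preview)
Your overall strategy---dualize and invoke Bhatt's results on killing higher direct images by finite covers---is exactly the paper's, but your execution of (\ref{eq.GRDotCohomologyVanishing}) contains a genuine gap.  The claim that ``$\dim W = \dim X$ matches the truncation $\boldsymbol{\tau}_{>0}$ on the structure-sheaf side with $\boldsymbol{\tau}_{>-\dim X}$ on the dualizing-complex side'' is false.  Concretely: Bhatt's theorem gives a finite cover $g$ so that $\myR\pi_*\O_X \to \myR(\pi\circ g)_*\O_U$ factors through the sheaf $(\pi\circ g)_*\O_U$.  Dualizing, the trace $\myR(\pi\circ g)_*\omega_U^{\mydot} \to \myR\pi_*\omega_X^{\mydot}$ factors through $\myR\sHom_W\bigl((\pi\circ g)_*\O_U,\,\omega_W^{\mydot}\bigr)$.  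But $(\pi\circ g)_*\O_U$ is, via Stein factorization, $\ell_*\O_{\overline W}$ for a finite $\ell\colon \overline W \to W$, and its dual is $\ell_*\omega_{\overline W}^{\mydot}$---which is \emph{not} concentrated in degree $-\dim X$ unless $\overline W$ is Cohen--Macaulay.  So applying $\boldsymbol{\tau}_{>-\dim X}$ to this factorization does not yield zero.

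The paper's proof is aware of precisely this obstruction and fixes it by an additional step: after obtaining $\overline W$, it applies a second result of Bhatt (\cite[Proposition 5.4.2]{BhattThesis}) to find a further finite cover $n\colon \tld W \to \overline W$ so that $\boldsymbol{\tau}_{>-\dim X}\bigl(n_*\omega_{\tld W}^{\mydot}\bigr) \to \boldsymbol{\tau}_{>-\dim X}\bigl(\omega_{\overline W}^{\mydot}\bigr)$ is already zero, then pulls this cover back to $X$.  The trace map now factors through $\omega_{\tld W}^{\mydot} \to \omega_{\overline W}^{\mydot}$, and this added step is what kills the truncation.  For (\ref{eq.GRNotDotCohomologyVanishing}) the paper similarly uses \cite[Proposition 5.4.2]{BhattThesis} directly at the outset---choosing a cover $a\colon X'\to X$ so that $a_*\omega_{X'}^{\mydot}\to\omega_X^{\mydot}$ factors through $\omega_X[\dim X]$---rather than your proposed iteration, which as written is vague about what exactly \autoref{thmEquationalLemma} is being applied to.
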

\begin{proof}
 It is harmless to assume that $\pi$ is birational (simply take the normalization of $W$ inside the fraction field of $X$) and also that $W$ is affine.

First, choose a finite cover $a : X' \to X$ such that $a : \omega_{X'}^{\mydot} \to \omega_X^{\mydot}$ factors through $\omega_X[\dim X]$ by \cite[Proposition 5.4.2]{BhattThesis}.  Set $X' \to W' \to W$ to be the Stein factorization of $\pi \circ a$ (thus $\ell : W' \to W$ is finite).
By \cite[Theorem 5.0.1]{BhattThesis}, there exists a finite cover $b : \overline{X} \to X'$ from a normal $\overline{X}$ such that $\myR (\pi \circ a)_* \O_{X'} \to \myR (\pi \circ a \circ b)_* \O_{\overline{X}}$ factors through $(\pi \circ a \circ b)_* \O_{\overline{X}}$.  Set $\overline{X} \to \overline{W}$ to be the Stein-factorization of $\pi \circ a \circ b$ (thus $m : \overline{W} \to W'$ is finite and $(\pi \circ a \circ b)_* \O_{\overline{X}} = (l \circ m)_* \O_{\overline{W}}$).  Choose a further cover $n : \tld W \to \overline{W}$ such that ${\boldsymbol{\tau}}_{> -\dim X} (n_* \omega_{\tld{W}}^{\mydot}) \to {\boldsymbol{\tau}}_{> -\dim X} (\omega_{\overline{W}}^{\mydot})$ is zero by \cite[Proposition 5.4.2]{BhattThesis}.  By making $\tld{W}$ larger if necessary, we additionally assume that $c : \tld X \to \overline{X}$, the normalization of $\overline{X}$ in the fraction field of $\tld{W}$, satisfies the condition that $\myR (\pi \circ a \circ b)_* \O_{\overline{X}} \to \myR (\pi \circ a \circ b  \circ c)_* \O_{\tld X}$ factors through $(\pi \circ a \circ b \circ c)_* \O_{\tld X} \cong (l \circ m \circ n)_* \O_{\tld W}$ again using \cite[Theorem 5.0.1]{BhattThesis}.
\[
 \xymatrix{ \tld{X} \ar[r]^{c} \ar[d] & \overline{X} \ar[d] \ar[r]^b & X' \ar[d] \ar[r]^a & X \ar[d]^{\pi} \\
	    \tld{W} \ar[r]_{n} & \overline{W} \ar[r]_m & W' \ar[r]_{\ell} & W
}
\]
Putting this all together, we have the following factorization of $\myR \pi_* \O_{X} \to \myR \pi_* \O_{\tld X}$:
\begin{multline*}
\myR \pi_* \O_{X} \to \myR (\pi \circ a)_* \O_{X'} \to (\pi \circ a \circ b)_* \O_{\overline{X}} \\ \to \myR (\pi \circ a \circ b)_* \O_{\overline{X}}  \to (\pi \circ a \circ b \circ c)_* \O_{\tld X}  \to \myR (\pi \circ a \circ b \circ c)_* \O_{\tld X}.
\end{multline*}
We now note that the term $\myR (\pi \circ a \circ b)_* \O_{\overline{X}}$ in the factorization can be removed yielding:
\[
\myR \pi_* \O_{X} \to \myR (\pi \circ a)_* \O_{X'} \to (\pi \circ a \circ b)_* \O_{\overline{X}} \to (\pi \circ a \circ b \circ c)_* \O_{\tld X}  \to \myR (\pi \circ a \circ b \circ c)_* \O_{\tld X}.
\]
which by factoring along the lower part of the above diagram yields
\[
\myR \pi_* \O_{X} \to \myR (\pi \circ a)_* \O_{X'} \to (\ell \circ m)_* \O_{\overline{W}}  \to (\ell \circ m \circ n)_* \O_{\tld W}  \to \myR (\pi \circ a \circ b \circ c)_* \O_{\tld X}.
\]
Now we apply the functor $\myR \sHom_{X}(\blank, \omega_W^{\mydot})$ to this factorization and obtain:
\[
\myR \pi_* \omega_{X}^{\mydot} \ot \myR (\pi \circ a)_* \omega_{X'}^{\mydot} \ot (\ell \circ m)_* \omega_{\overline{W}}^{\mydot}  \ot[\alpha] (\ell \circ m \circ n)_* \omega_{\tld W}^{\mydot} \ot \myR (\pi \circ a \circ b \circ c)_* \omega_{\tld X}^{\mydot}
\]
Note we don't need $\myR$ on $\ell$, $m$ and $n$ since they are finite.  Setting $U = \tld X$, the first statement of the theorem, \eqref{eq.GRDotCohomologyVanishing} now follows since $\boldsymbol{\tau}_{>-\dim W} (\alpha) = 0$ based on our choice of $n$.  However,
\[
\myR \pi_* \omega_{X}^{\mydot} \ot \myR (\pi \circ a)_* \omega_{X'}^{\mydot}
\]
factors through $\myR \pi_* \omega_X[\dim X]$.  Precomposing with the natural map $\myR (\pi \circ a \circ b \circ c)_* \omega_{\tld X}^{\mydot} \leftarrow \myR (\pi \circ a \circ b \circ c)_* \omega_{\tld X}[\dim X]$ and taking cohomology yields \eqref{eq.GRNotDotCohomologyVanishing}.
\end{proof}

\begin{theorem}
\label{thm.NadelInCharP}
Suppose that $\pi \: X \to S$ is a proper morphism of $F$-finite integral schemes of characteristic $p > 0$ with $X$ normal.  Further suppose that $L$ is a Cartier divisor on $X$ and that $\Delta$ is a $\bQ$-divisor on $X$ such that $L - (K_X + \Delta)$ is a $\pi$-big and $\pi$-semi-ample $\bQ$-Cartier $\bQ$-divisor on $X$.  Then there exists a finite surjective map $f \: Y \to X$ from a normal integral $F$-finite scheme $Y$ such that:
\begin{enumerate}
\item  The natural trace map $$f_* \O_Y(\lceil K_Y + f^*(L - (K_X + \Delta)) \rceil) \to \O_X(\lceil K_X +  L - (K_X + \Delta) \rceil)$$ has image $\tau(X; \Delta) \tensor \O_X(L)$.
\item  The induced map on cohomology $$\myH^i \myR(\pi \circ f)_* \O_Y(\lceil K_Y + f^*(L - (K_X + \Delta)) \rceil) \to \myH^i \myR \pi_* \tau(X; \Delta) \tensor \O_X(L)$$ is zero for all $i > 0$.
\end{enumerate}
\end{theorem}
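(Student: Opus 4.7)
The plan is to obtain part (1) directly from \autoref{thm.MainThm} and part (2) from a further finite cover supplied by Bhatt's cohomology-killing results for big and semi-ample line bundles, mimicking the proof of \autoref{lem.GRVanishingUptoFiniteMaps} but twisted by the line bundle $\O_X(L - K_X - \Delta)$.

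Set $M \colonequals L - (K_X + \Delta)$. First I would apply \autoref{thm.MainThm} to the pair $(X,\Delta)$ to produce a finite separable map $f_0 \: Y_0 \to X$ from a normal $F$-finite integral scheme with
\[
\tau(X;\Delta) \;=\; \Image\bigl( f_{0 *} \O_{Y_0}\bigl( \lceil K_{Y_0} - f_0^*(K_X+\Delta) \rceil \bigr) \to K(X) \bigr).
\]
Tensoring with $\O_X(L)$ and using the projection formula converts this into exactly the image statement (1) for $f_0$. Using \autoref{lem.PullBackQCartierViaSeparable} I would then pass to a further finite separable cover arranging that $f_0^* M$ is an integral Cartier divisor on $Y_0$; set $\sN \colonequals \O_{Y_0}(f_0^* M)$, which is an honest line bundle and which remains $(\pi \circ f_0)$-big and $(\pi \circ f_0)$-semi-ample. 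Condition (1) persists through \emph{any} further finite cover: functoriality of trace shows the image can only decrease, while \autoref{prop.EasyContainmentViaAlteration} shows it always contains $\tau(X;\Delta) \otimes \O_X(L)$, forcing equality.

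To arrange (2) I would produce a further finite cover $g \: Y \to Y_0$ by running the argument of \autoref{lem.GRVanishingUptoFiniteMaps} with every invocation of Bhatt's theorems (\cite[Thm.~5.0.1]{BhattThesis} and \cite[Prop.~5.4.2]{BhattThesis}) replaced by its twist against the line bundle $\sN$. Formally, each step is the same statement after application of the projection formula for the invertible sheaf $\sN$, and it is precisely the big-and-semi-ample hypothesis on $\sN$ that guarantees the existence of finite covers achieving the relevant factorizations in this twisted $\pi$-relative setting, cf.\ \cite{BhattDerivedDirectSummand,BhattThesis}. The resulting stacked Stein factorization produces a finite cover $g$ so that the trace map
\[
\myH^i \myR (\pi \circ f_0 \circ g)_* \bigl(\omega_Y \otimes g^* \sN\bigr) \longrightarrow \myH^i \myR (\pi \circ f_0)_* \bigl(\omega_{Y_0} \otimes \sN\bigr)
\]
vanishes for all $i > 0$. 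Since $f_0^* M$ is integral Cartier, we have $\omega_Y \otimes g^* \sN \cong \O_Y(\lceil K_Y + f^* M \rceil)$ for $f \colonequals f_0 \circ g$; composing with the surjection onto $\tau(X;\Delta) \otimes \O_X(L)$ guaranteed by (1) then forces the map into $\myH^i\myR \pi_*(\tau(X;\Delta) \otimes \O_X(L))$ to vanish for $i > 0$, yielding (2).

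The main obstacle is the careful relative, twisted implementation of Bhatt's theorems: one must thread the line bundle $\sN$ through every Stein factorization and each invocation of \cite[Prop.~5.4.2]{BhattThesis}, and rely on the $\pi$-relative form of the big-and-semi-ample cohomology-killing theorem rather than the absolute one. Everything else -- the repeated use of \autoref{prop.EasyContainmentViaAlteration} to preserve (1) through further covers, and the identification of $\omega_Y \otimes g^* \sN$ with $\O_Y(\lceil K_Y + f^*M \rceil)$ under the integral-Cartier assumption on $f_0^*M$ -- is routine.
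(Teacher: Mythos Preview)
Your approach to part (1) matches the paper's exactly. For part (2), however, your route diverges from the paper's and has a genuine gap.

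You propose to run a ``twisted'' version of \autoref{lem.GRVanishingUptoFiniteMaps} for the map $\pi\circ f_0 \: Y_0 \to S$. But the proof of that lemma relies essentially on the map being an \emph{alteration}: the repeated Stein factorizations produce finite covers $W' \to W$, $\overline{W} \to W'$, etc., precisely because the source and target have the same dimension. Since $\pi\circ f_0$ is typically not generically finite, this structure collapses. Moreover, the inputs actually invoked in that proof from \cite{BhattThesis}---Theorem~5.0.1 and Proposition~5.4.2---are about $\O$ and $\omega^{\mydot}$, not about their twists by a relatively big and semi-ample line bundle; you are assuming a ``$\pi$-relative twisted'' cohomology-killing theorem that is essentially the content of the statement you are trying to prove.

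The paper's argument is quite different in mechanism. It uses the semi-ample hypothesis \emph{structurally}: after the reduction step, write $\O_X(L-(K_X+\Delta)) = g^*\sL$ for an alteration $g \: X \to W$ over $S$ with $\sL$ relatively \emph{ample} on $W$ (bigness ensures $\dim W = \dim X$). Then apply the \emph{untwisted} \autoref{lem.GRVanishingUptoFiniteMaps} to the alteration $g$, obtaining a finite $h \: Y \to X$ with $\myR(g\circ h)_*\omega_Y \to \myR g_*\omega_X$ factoring through $g_*\omega_X$. The crucial step you are missing is a Frobenius amplification trick: choose $e$ with $p^e$ large enough that relative Serre vanishing gives $\myH^i\myR\rho_*(g_*\omega_X \otimes \sL^{p^e}) = 0$ for $i>0$, and then route the trace map through the $e$-th Frobenius on $W$, which replaces $\sL$ by $(F^e)^*\sL = \sL^{p^e}$. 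The composite then passes through a zero cohomology group. This Frobenius--Serre manoeuvre is the heart of the proof and does not appear in your proposal.
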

\begin{proof}[Proof (cf. \cite{BhattThesis})]
Certainly by \autoref{thm.MainThm} we can assume that (a) holds for some surjective finite map $f' \: Y' \to X$ (and every further finite map).  On $Y'$, we may also assume that $f'^* \Delta$ is integral and $f'^*(K_X + \Delta)$ is Cartier, and we may further assume that $\O_{Y'}\left(f'^*(L - (K_X +\Delta))\right)$ is the pull-back of a line bundle $\sL$ via some map $\pi \: Y' \to W$ over $S$ such that $\sL$ is ample over $S$.  Note, $f'^* (L - (K_X + \Delta))$ is still big so we may assume that $W$ has the same dimension as $Y'$ (and thus also the same dimension as $X$).
\[
\xymatrix{
Y' \ar[r]^{f'} \ar[d]_{\pi} & X \ar[d] \\
W \ar[r]_{\rho} & S
}
\]
Since we only need now to prove (b), replacing $X$ by $Y'$ we may assume that $K_X + \Delta$ is Cartier and that $\O_X\left( L - (K_X +\Delta) \right)$ is the pull-back of some relatively ample line bundle $\sL$ via an alteration $g : X \to W$ over $S$ with structural map $\rho \: W \to S$.
\[
\xymatrix{
Y \ar[r]^-h & X := Y' \ar[d]_{g} \ar[dr]^{\pi} \\
& W \ar[r]_{\rho} & S
}
\]

By \autoref{lem.GRVanishingUptoFiniteMaps}, there exists a finite cover $h : Y \to X$ such that $\myR (g \circ h)_* \omega_Y \to \myR g_* \omega_X$ factors through $g_* \omega_X$.  Choose $n_0 > 0$ such that $\myH^i \myR \rho_* \left( g_* \omega_X \tensor \sL^n \right) = 0$ for all $i > 0$ and all $n > n_0$ (by Serre vanishing).  Also choose an integer $e > 0$ such that $p^e > n_0$.  Consider now $\xymatrix{Y \ar[r]^h & X \ar[r]^{g} & W \ar[r]^{F^e} & W \ar[r]^{\rho} & S}$ which can also be expressed as
$Y \to[h] X \to[g] W  \to[\rho]  S \to[F^e] S$ and $Y \to[h]  X \to[g]  W \to[F^e] W \to[\rho] S$.  Finally consider the factorization:
\begin{equation*}
\begin{split}
\myH^i \myR (\rho \circ F^e \circ g \circ h)_*  &(\omega_Y \tensor h^*g^*(F^e)^* \sL)
 = \myH^i \myR (\rho \circ F^e)_* \left(  \sL^{p^e} \tensor \myR (g \circ h)_* \omega_Y \right) \\
&\to  F^e_* \myH^i \left( \myR (\rho)_* (\sL^{p^e} \tensor g_* \omega_X) \right) \\
&\to  F^e_* \myH^i \left( \myR (\rho)_* (\sL^{p^e} \tensor \myR g_* \omega_X) \right)
=  \myH^i \left( (\myR \rho_*) F^e_*(\sL^{p^e} \tensor \myR g_* \omega_X) \right) \\
&\to  \myH^i \left( (\myR \rho_*) (\sL \tensor \myR g_* F^e_* \omega_X) \right) \\
&\to  \myH^i \left( (\myR \rho_*) (\sL \tensor \myR g_* \omega_X) \right) =  \myH^i \myR \pi_* \left(  g^* \sL \tensor \omega_X \right).
\end{split}
\end{equation*}
This map is zero since the second line is zero by construction.
\end{proof}

\begin{corollary}
\label{cor.NadelVanishingForVarieties}
Let $X$ be a projective variety over an $F$-finite field $k$.  Suppose that $L$ is a Cartier divisor on $X$, and that $\Delta$ is a $\bQ$-divisor such that $L - (K_X + \Delta)$ is a big and semi-ample $\bQ$-Cartier $\bQ$-divisor.  Then there exists a finite surjective map $f \: Y \to X$ from a normal variety $Y$ such that:
\begin{enumerate}
\item  The natural trace map $$f_* \O_Y(\lceil K_Y + f^*(L - (K_X + \Delta) \rceil) \to \O_X(\lceil K_X +  L - (K_X + \Delta) \rceil)$$ has image $\tau(X; \Delta) \tensor \O_X(L)$.
\item  The induced map on cohomology
\[
H^i\big(Y, \O_Y(\lceil K_Y + f^*(L - (K_X + \Delta)) \rceil) \big) \to H^i\big(X, \tau(X; \Delta) \tensor \O_X(L)\big)
\]
is zero for all $i > 0$.
\end{enumerate}
\end{corollary}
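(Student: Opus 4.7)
The plan is to obtain this corollary as a direct specialization of Theorem~\ref{thm.NadelInCharP} applied to the structure morphism $\pi \: X \to S \colonequals \Spec k$. First I would verify the hypotheses: since $X$ is projective over $k$, the morphism $\pi$ is proper; since $k$ is $F$-finite and $X$ is of finite type over $k$, the scheme $X$ is automatically an $F$-finite integral scheme of characteristic $p > 0$, and it is normal by assumption on a ``variety'' in the sense used throughout the paper. The absolute bigness and semi-ampleness of $L - (K_X+\Delta)$ coincide with the corresponding relative notions over $\Spec k$, so the hypothesis ``$\pi$-big and $\pi$-semi-ample'' of the theorem is met.

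Theorem~\ref{thm.NadelInCharP} then furnishes a finite surjective map $f \: Y \to X$ from a normal integral $F$-finite scheme $Y$ satisfying the two conclusions of the theorem. Because $Y$ admits a finite (hence finite-type and separated) surjective morphism to the $k$-variety $X$ and is integral, it is itself a variety over $k$, so that the structural assumption on $Y$ in the corollary is automatic. Part (a) of the corollary is then literally part (a) of the theorem.

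For part (b), I would translate between $\myH^i \myR \pi_*(\blank)$ on $\Spec k$ and $H^i(X,\blank)$: for any coherent sheaf $\sG$ on a $k$-scheme proper over $\Spec k$, the quasi-coherent sheaf $\myH^i \myR \pi_* \sG$ on $\Spec k$ corresponds to the $k$-vector space $H^i(X,\sG)$. Applying this identification to each side of the map in Theorem~\ref{thm.NadelInCharP}(b) — with structural morphism $\pi \circ f$ on the $Y$-side and $\pi$ on the $X$-side — produces precisely the map of cohomology groups appearing in conclusion (b) of the corollary, and its vanishing is inherited from the vanishing of the sheaf-theoretic map.

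I do not anticipate a substantive obstacle, as the real content is already contained in Theorem~\ref{thm.NadelInCharP}; the corollary is simply its absolute analogue over a projective base. Any minor points to track are bookkeeping: confirming that the $Y$ produced by the theorem is a variety over $k$, and that pushing a coherent sheaf forward along a proper map to $\Spec k$ recovers ordinary sheaf cohomology. Both are standard and do not require any new ideas beyond those already developed.
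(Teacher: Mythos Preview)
Your proposal is correct and matches the paper's own treatment: the paper gives no separate proof for this corollary, treating it as an immediate specialization of Theorem~\ref{thm.NadelInCharP} with $S = \Spec k$. Your bookkeeping checks (identifying $\myH^i \myR\pi_*$ over $\Spec k$ with $H^i(X,\blank)$, and that $Y$ is a variety over $k$) are exactly the routine verifications needed.
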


\section{Transformation rules for test ideals under alterations}
\label{sec.TransformationRulesForTestIdeals}

The fact that the test ideal can be computed via alterations suggests a transformation rule for test ideals under alterations.  We derive this transformation rule in this section.
We first state a definition.

\begin{definition}
\label{def.TauCohomology}
Suppose that $X$ is a normal variety over a field $k$ and $\Gamma$ is a $\bQ$-Cartier $\bQ$-divisor. For example, one might take $\Gamma = L - (K_X + \Delta)$ where $L$ is a Cartier divisor and $K_X + \Delta$ is $\bQ$-Cartier. We define
\[
\tauCohomology^0(X, \Gamma) \colonequals \bigcap_{f \: Y \to X} \Image \Big(H^0(Y, \O_{Y}\big(\lceil K_Y + f^*\Gamma\rceil)\big) \to[\tr_{f}] H^0\big(X, \O_X(\lceil K_X + \Gamma\rceil)\big)\Big)
\]
where $f$ runs over all finite dominant maps $f \:Y \to X$ such that $Y$ is normal and equidimensional.

Alternately, if $X$ is any (not necessarily normal or even reduced) $F$-finite $d$-dimensional equidimensional scheme of finite type over a field $k$ and $\sL$ is any line bundle on $X$, then we define
\[
\tauCohomology^0(X, \sL) \colonequals \bigcap_{f \: Y \to X} \Image\Big( H^0\big(Y, \omega_Y \tensor f^* \sL\big) \to[\tr_{f}] H^0\big(X, \omega_X \tensor \sL\big) \Big)
\]
where $f$ runs over all finite dominant maps $f : Y \to X$ such that $Y$ is normal and equidimensional of dimension $d$. In both cases the maps are induced by the trace map as described in \autoref{prop.TraceInGenerality}.
\end{definition}

\begin{remark}
We expect that the reader has noticed the upper-script $0$ in the definition of $\tauCohomology^0(X, \Gamma)$.  We include this for two reasons:  \begin{itemize}
\item[(i)]  It serves to remind the reader that $\tauCohomology^0(X, \Gamma)$ is a submodule of $H^0\left(X, \O_X(\lceil K_X + \Gamma\rceil)\right)$.
\item[(ii)]  In the future, it might be reasonable to extend this definition to higher cohomology groups.  For example, \autoref{thm.NadelInCharP} is a vanishing theorem for appropriately defined higher $\tauCohomology^i(X, \Gamma)$.
\end{itemize}
\end{remark}

First we make a simple observation:
\begin{lemma}
\label{lem.TauTransformationRuleForFiniteMaps}
For any finite dominant map $f \: Y \to X$ between proper normal varieties over a field $k$ with $\Gamma$ a $\bQ$-Cartier $\bQ$-divisor on $X$, then $\tauCohomology^0(Y, f^*\Gamma)$ is sent onto $\tauCohomology^0(X, \Gamma)$ via the trace map
\[
\beta : H^0\big(Y, \O_{Y}(\lceil K_Y + f^*\Gamma \rceil)\big) \to H^0\big(X, \O_X(\lceil K_X + \Gamma )\rceil) \big).
\]
\end{lemma}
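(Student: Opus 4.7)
The plan is to establish two containments, namely $\beta(\tauCohomology^0(Y, f^*\Gamma)) \subseteq \tauCohomology^0(X, \Gamma)$ and its reverse; combined these give the claimed equality (and in particular the surjectivity asserted by the lemma).

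For the forward inclusion, I would fix $s \in \tauCohomology^0(Y, f^*\Gamma)$ and an arbitrary finite dominant map $g \: Z \to X$ with $Z$ normal and equidimensional. The key step is to build a common refinement of $f$ and $g$: take $W$ to be the normalization of $X$ inside the compositum of $K(Y)$ and $K(Z)$ in a fixed algebraic closure of $K(X)$ (or equivalently, the normalization of the dominant components of the fiber product $Y \times_X Z$). Then $W$ is normal and equidimensional, and comes with finite dominant maps $h \: W \to Y$ and $h' \: W \to Z$ satisfying $f \circ h = g \circ h'$. By definition of $\tauCohomology^0(Y, f^*\Gamma)$ one can write $s = \tr_h(u)$ for some $u \in H^0(W, \O_W(\lceil K_W + h^*f^*\Gamma \rceil))$, and then compatibility of trace with composition (\autoref{lem.CompatOfTraceBasic}(a)) gives
\[
\beta(s) \;=\; \tr_f(\tr_h(u)) \;=\; \tr_{f \circ h}(u) \;=\; \tr_{g \circ h'}(u) \;=\; \tr_g(\tr_{h'}(u)) \;\in\; \Image(\tr_g).
\]
Since $g$ was arbitrary, $\beta(s) \in \tauCohomology^0(X, \Gamma)$.

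For the onto direction I would exploit the finite-dimensionality that comes with properness. Since $Y$ is proper over the field $k$, the space $H^0(Y, \O_Y(\lceil K_Y + f^*\Gamma \rceil))$ is a finite-dimensional $k$-vector space. The family $\{\Image(\tr_h)\}$ indexed by finite dominant covers $h \: W \to Y$ with $W$ normal and equidimensional is directed under reverse inclusion: given two covers $h_1, h_2$, the compositum construction above produces a common refinement $h_0$ with $\Image(\tr_{h_0}) \subseteq \Image(\tr_{h_1}) \cap \Image(\tr_{h_2})$. A directed intersection of subspaces inside a finite-dimensional vector space is realized by a single term, so there is some $h_0 \: W_0 \to Y$ with
\[
\tauCohomology^0(Y, f^*\Gamma) \;=\; \tr_{h_0}\bigl(H^0(W_0, \O_{W_0}(\lceil K_{W_0} + h_0^*f^*\Gamma \rceil))\bigr).
\]
Applying $\beta$ and using composition compatibility of trace once more,
\[
\beta\bigl(\tauCohomology^0(Y, f^*\Gamma)\bigr) \;=\; \tr_{f \circ h_0}\bigl(H^0(W_0, \O_{W_0}(\lceil K_{W_0} + (f \circ h_0)^*\Gamma \rceil))\bigr).
\]
But $f \circ h_0 \: W_0 \to X$ is itself a finite dominant map from a normal equidimensional scheme, so the right-hand side appears as one of the terms of the intersection defining $\tauCohomology^0(X, \Gamma)$, and therefore contains it. Combining with the forward inclusion yields the desired equality.

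The only delicate point in the plan is constructing a common refinement of two finite dominant covers that is itself normal and equidimensional of the correct dimension; a naive fiber product $Y \times_X Z$ need not have these properties. I would handle this uniformly by normalizing $X$ inside a field compositum (so that the resulting scheme is automatically integral of the right dimension, or a disjoint union of such when the inputs have several components) and invoking \autoref{lem.CompatOfTraceBasic}(a) to track the trace. With that construction in hand, everything else is formal bookkeeping.
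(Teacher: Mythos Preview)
Your proof is correct and follows essentially the same approach as the paper: one inclusion uses a common-refinement argument (given $g\colon Z\to X$, find a cover of $X$ factoring through both $f$ and $g$), and the other uses properness to get finite-dimensionality, hence stabilization of the intersection at a single cover $h_0\colon W_0\to Y$, so that $\beta(\tauCohomology^0(Y,f^*\Gamma))$ equals a single trace image from a cover of $X$. Your write-up is a bit more explicit than the paper's about the directedness of the family $\{\Image(\tr_h)\}$ and the construction of the common refinement via normalization in a field compositum, but the argument is the same.
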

\begin{proof}
If $Y$ is proper over a field $k$, then $H^0\big(Y, \O_{Y}(\lceil K_Y + f^*\Gamma \rceil)\big)$ is a finite dimensional vector space, and so $\tauCohomology^0(Y, f^*\Gamma)$ is the image of a single map
\[
H^0\big(Y', \O_{Y'}(\lceil K_{Y'} + g^* f^* \Gamma \rceil)\big) \to H^0\big(Y, \O_Y(\lceil K_Y + f^* \Gamma \rceil) \big) \, \, ,
\]
for some finite cover $g \: Y' \to Y$.
Composing with the map to $H^0\big(X, \O_X(\lceil K_X + \Gamma \rceil) \big)$ yields the inclusion $\tauCohomology^0(X,\Gamma) \subseteq \beta\left( \tauCohomology^0(Y,f^*\Gamma)\right).$

For the other inclusion, we simply notice that given any finite dominant map $h : W \to X$, we can find a finite dominant map $U \to X$ which factors through both $h$ and $f$.
\end{proof}

\begin{lemma}
\label{lem.TauCanBeDefinedUsingDualizing}
Suppose that $X$ is as in \autoref{def.TauCohomology}.  Observe that $H^0(X, \omega_X \tensor \sL) \cong \bH^0(X, \omega_X^{\mydot}[-\dim X] \tensor \sL)$.  Furthermore, $\tauCohomology^0(X, \omega_X \tensor \sL)$ is:
\[
\bigcap_{f : Y \to X} \Image\left( \bH^0(Y, \omega_Y^{\mydot}[-\dim X] \tensor f^* \sL) \to[\tr_{f^{\mydot}}] \bH^0(X, \omega_X^{\mydot}[-\dim X] \tensor \sL)
\right).
\]
where the intersection runs over all finite dominant maps $f : Y \to X$ such that $Y$ is normal and equidimensional.
\end{lemma}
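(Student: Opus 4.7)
The plan is to reduce the statement to the observation that, under the two identifications indicated in the lemma, the intersection on the right of the display runs over literally the same vector spaces equipped with the same linear maps as the defining intersection for $\tauCohomology^{0}(X,\sL)$ recorded in the second part of \autoref{def.TauCohomology}. This will require two compatibilities: (i) the hypercohomology of the shifted dualizing complex agrees with the ordinary $H^{0}$ of the canonical module, and (ii) the trace of complexes $\tr_{f^{\mydot}}$ restricts on this $\bH^{0}$ to the canonical trace $\tr_{f}$.

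For (i), let $Z \in \{X, Y\}$, fix a line bundle $\sM$ on $Z$, and write $d$ for the common dimension. By the normalization of $\dcx_Z$ the canonical module $\omega_Z$ equals $\myH^{-d}\dcx_{Z}$ and is the first non-vanishing cohomology sheaf of $\dcx_{Z}$; thus $\dcx_{Z}[-d]\tensor \sM$ has cohomology concentrated in degrees $\geq 0$, with $\myH^{0}(\dcx_{Z}[-d]\tensor\sM)=\omega_{Z}\tensor\sM$. The hypercohomology spectral sequence
\[
E_{2}^{p,q}=H^{p}\bigl(Z,\myH^{q}(\dcx_{Z}[-d]\tensor\sM)\bigr)\Rightarrow \bH^{p+q}(Z,\dcx_{Z}[-d]\tensor\sM)
\]
satisfies $E_{2}^{p,q}=0$ whenever $p<0$ or $q<0$. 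Hence the only term contributing to $\bH^{0}$ is $E_{2}^{0,0}=H^{0}(Z,\omega_{Z}\tensor\sM)$, and all incoming and outgoing differentials at this bidegree vanish for the same reason. This produces canonical isomorphisms $\bH^{0}(Y,\dcx_{Y}[-d]\tensor f^{*}\sL)\cong H^{0}(Y,\omega_{Y}\tensor f^{*}\sL)$ and $\bH^{0}(X,\dcx_{X}[-d]\tensor\sL)\cong H^{0}(X,\omega_{X}\tensor\sL)$ (the latter being the observation already recorded in the lemma).

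For (ii), I would invoke the construction of the canonical trace in \eqref{eq.TraceCanonical}: $\tr_{f}$ is obtained by composing the natural map $\myH^{0}\myR f_{*}\omega_{Y}\to\myH^{-d}\myR f_{*}\dcx_{Y}$ with the $(-d)$-th cohomology of $\tr_{f^{\mydot}}\:\myR f_{*}\dcx_{Y}\to\dcx_{X}$. Because $f$ is finite we have $\myR f_{*}=f_{*}$, so the first of these maps is the isomorphism $f_{*}\omega_{Y}\cong f_{*}\myH^{-d}\dcx_{Y}=\myH^{-d}f_{*}\dcx_{Y}$; tensoring with $\sL$ via the projection formula and applying $\myR\Gamma(X,\blank)$ then yields the commutative square
\[
\xymatrix{
H^{0}(Y,\omega_{Y}\tensor f^{*}\sL)\ar[d]_{H^{0}(\tr_{f})}\ar[r]^{\cong} & \bH^{0}(Y,\dcx_{Y}[-d]\tensor f^{*}\sL)\ar[d]^{\tr_{f^{\mydot}}}\\
H^{0}(X,\omega_{X}\tensor\sL)\ar[r]^{\cong} & \bH^{0}(X,\dcx_{X}[-d]\tensor\sL)
}
\]
in which the horizontal arrows are the identifications from~(i). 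Combining (i) and (ii), the two intersections are identical, proving the lemma. The main obstacle is essentially only bookkeeping; the whole content is the spectral-sequence identification together with the compatibility of $\tr_{f}$ and $\tr_{f^{\mydot}}$ that was built into their simultaneous construction in \autoref{sec.Duality}.
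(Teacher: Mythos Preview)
Your proposal is correct and follows essentially the same approach as the paper: the paper's proof simply says the isomorphism comes from analyzing the hypercohomology spectral sequence and that the second statement ``follows immediately.'' You have spelled out both the spectral-sequence edge argument and the compatibility of $\tr_{f}$ with $\tr_{f^{\mydot}}$ more carefully than the paper does, but the underlying idea is identical.
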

\begin{proof}
First note that the isomorphism $H^0(X, \omega_X \tensor \sL) \cong \bH^0(X, \omega_X^{\mydot}[-\dim X] \tensor \sL)$ follows from analyzing the spectral sequence computing $\bH^0(X, \omega_X^{\mydot}[-\dim X] \tensor \sL)$.
The second statement then follows immediately.
\end{proof}

Mimicking \autoref{lem.TauTransformationRuleForFiniteMaps}, we also obtain:
\begin{lemma}
\label{lem.TauTransformationRuleForFiniteMapsDualizing}
For any finite dominant map $f \: Y \to X$ between proper equidimensional schemes of finite type over a field $k$ with $\sL$ a line bundle on $X$, then $\tauCohomology^0(Y, \omega_Y \tensor f^*\sL)$ is sent onto $\tauCohomology^0(X, \omega_X )$ via the trace map
\[
\bH^0\big(Y, \omega_Y^{\mydot}[-\dim X] \tensor f^* \sL \big) \to \bH^0\big(X, \omega_X^{\mydot}[-\dim X] \tensor \sL \big) \, \, .
\]
\end{lemma}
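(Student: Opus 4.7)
The plan is to mirror the proof of \autoref{lem.TauTransformationRuleForFiniteMaps} in the more general derived-category framework. Using \autoref{lem.TauCanBeDefinedUsingDualizing} I will freely replace every $H^{0}(Z,\omega_{Z}\tensor g^{*}\sL)$ with $\bH^{0}(Z,\omega_{Z}^{\mydot}[-\dim X]\tensor g^{*}\sL)$; the map $\beta$ whose image we are controlling is then just $\bH^{0}$ of the derived trace $\tr_{f^{\mydot}} \: \myR f_{*}\omega_{Y}^{\mydot}\to \omega_{X}^{\mydot}$ after tensoring with $\sL$ via the projection formula. (I read the target $\tauCohomology^{0}(X,\omega_{X})$ in the statement as the evident typo for $\tauCohomology^{0}(X,\omega_{X}\tensor\sL)$, matching the domain and the displayed map.)

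For the inclusion $\beta(\tauCohomology^{0}(Y,\omega_{Y}\tensor f^{*}\sL))\subseteq \tauCohomology^{0}(X,\omega_{X}\tensor\sL)$, I would first pick any finite dominant $g \: W\to X$ with $W$ normal equidimensional of dimension $\dim X$, then construct a common cover: let $Z$ be the normalization of an irreducible component of $Y\times_{X}W$ which dominates $X$. This $Z$ is a normal integral scheme of dimension $\dim X$ admitting finite dominant maps $\alpha \: Z\to Y$ and $\gamma \: Z\to W$ with $f\circ\alpha=g\circ\gamma$. By the compatibility of trace with composition (\autoref{lem.CompatOfTraceBasic}(a), applied to dualizing complexes), the trace from $Z$ to $X$ factors both as $\tr_{f^{\mydot}}\circ f_{*}\tr_{\alpha^{\mydot}}$ and as $\tr_{g^{\mydot}}\circ g_{*}\tr_{\gamma^{\mydot}}$ (all after twisting by $\sL$). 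Any $\xi\in \tauCohomology^{0}(Y,\omega_{Y}\tensor f^{*}\sL)$ lies in the image of $\tr_{\alpha^{\mydot}}$ by definition, so $\beta(\xi)$ lies in the image of the trace from $Z$ to $X$, and hence in $\Image(\tr_{g^{\mydot}})$. Since $g$ was arbitrary, $\beta(\xi)$ lies in the intersection defining $\tauCohomology^{0}(X,\omega_{X}\tensor\sL)$.

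For the reverse inclusion I use properness over $k$: each $\bH^{0}(Z,\omega_{Z}^{\mydot}[-\dim X]\tensor g^{*}\sL)$ is a finite-dimensional $k$-vector space, so the descending intersection defining $\tauCohomology^{0}(Y,\omega_{Y}\tensor f^{*}\sL)$ stabilizes at some single finite dominant cover $h \: Y'\to Y$ with $Y'$ normal equidimensional, yielding $\tauCohomology^{0}(Y,\omega_{Y}\tensor f^{*}\sL)=\Image(\tr_{h^{\mydot}})$. Applying $\beta$ and invoking compatibility of trace with composition once more, $\beta(\tauCohomology^{0}(Y,\omega_{Y}\tensor f^{*}\sL))$ equals the image of the trace along the single finite dominant map $f\circ h \: Y'\to X$ with $Y'$ normal equidimensional of dimension $\dim X$; this image manifestly contains the intersection $\tauCohomology^{0}(X,\omega_{X}\tensor\sL)$.

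The only genuine obstacle is verifying that the common cover $Z$ works: that an irreducible component of $Y\times_{X}W$ dominates $X$ (forced by finiteness and dominance of $Y\to X$ and $W\to X$), that its normalization is again finite over $X$ (finite-type plus excellence of the base, together with finiteness of normalization for excellent schemes), and that the resulting commutative square commutes with trace after twisting by $\sL$ via the projection formula. Everything else is routine bookkeeping with the duality formalism already developed in \autoref{sec.Duality}.
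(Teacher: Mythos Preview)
Your proposal is correct and follows exactly the paper's approach: the paper's proof of this lemma simply reads ``The proof is identical to the proof of \autoref{lem.TauTransformationRuleForFiniteMaps},'' and your two inclusions via (i) a common finite cover $Z$ factoring through both $f$ and a given $g \: W \to X$ and (ii) finite-dimensionality over $k$ forcing the intersection on $Y$ to stabilize are precisely the two halves of that proof, just written out in more detail and in the dualizing-complex formalism. Your observation that $\tauCohomology^{0}(X,\omega_{X})$ in the statement should read $\tauCohomology^{0}(X,\omega_{X}\tensor\sL)$ is also correct.
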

\begin{proof}
The proof is identical to the proof of \autoref{lem.TauTransformationRuleForFiniteMaps}.
\end{proof}

By \autoref{thm.MainThm}, if $X = \Spec R$ is affine and $L = 0$, then $\tauCohomology^0(X, L - (K_X + \Delta))$ is just the global sections of $\tau(X; \Delta)$.
Inspired by this, we demonstrate that we may also use alterations in order to compute $\tauCohomology^0(X, L - (K_X + \Delta))$.

First, suppose that $f \: Y \to X$ is an alteration and recall from \autoref{prop.TraceInGenerality} that we have a map
\[
  \Tr_f \: f_*\omega_Y(-\lfloor f^*(K_X+\Delta) \rfloor) \to \omega_X(-\lfloor K_X+\Delta \rfloor) \, \, .
\]
Twisting by a Cartier divisor $L$ and taking cohomology leads us to a map
\begin{align}\label{eq.TraceOnCohomology}
\Psi : H^0\left(Y, \omega_Y(\lceil f^*(L - K_X - \Delta) \rceil)\right) & = H^0\left(X, f_*\omega_Y(\lceil f^*(L - K_X-\Delta) \rceil) \right)\\
& \to H^0\left(X, \omega_X(\lceil L - K_X-\Delta \rceil) \right) \nonumber \, \, .
\end{align}

\begin{theorem}
\label{thm.tauCohomologyViaAlterations}
Suppose that $X$ is a $F$-finite normal variety over $k$ and that $\Delta$ is a $\bQ$-divisor such that $K_X + \Delta$ is $\bQ$-Cartier.  Finally set $L$ to be any Cartier divisor. Then
\[
\begin{split}
\tauCohomology^0&(X, L - K_X - \Delta ) \\
&= \bigcap_{f : Y \to X} \Image \Big(H^0\left(Y, \omega_Y(\lceil f^*(L - K_X - \Delta )\rceil)\right)
\to[\tr_{f}] H^0\left(X, \omega_X(\lceil L - K_X - \Delta \rceil)\right) \Big)
\end{split}
\]
where $f$ runs over all alterations $f \:Y \to X$ and the maps in the intersection are as in~\eqref{eq.TraceOnCohomology}.
\end{theorem}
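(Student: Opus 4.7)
The inclusion $\supseteq$ is automatic: every finite dominant map between normal equidimensional schemes is in particular an alteration, so the right-hand intersection is taken over a strictly larger class of morphisms than the one defining $\tauCohomology^0$. For $\subseteq$, fix an alteration $f\colon Y \to X$ and set $\Gamma := L - K_X - \Delta$; the goal is $\tauCohomology^0(X, \Gamma) \subseteq \Image(\tr_f^{H^0})$. Factor $f$ via Stein factorization as $Y \xrightarrow{g} Z \xrightarrow{h} X$, with $g$ proper birational (onto a normal $Z$, with connected fibers) and $h$ finite. Then $Z$ is $F$-finite, and $(Z, \Delta_Z)$ with $K_Z + \Delta_Z := h^*(K_X + \Delta)$ is again a log-$\bQ$-Gorenstein pair.

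Applying \autoref{lem.TauTransformationRuleForFiniteMaps} to the finite map $h$ yields $\tauCohomology^0(X, \Gamma) = \tr_h^{H^0}\bigl(\tauCohomology^0(Z, h^*\Gamma)\bigr)$. Since $\tr_f^{H^0}$ factors as $\tr_h^{H^0} \circ \tr_g^{H^0}$, it suffices to establish
\[ \tauCohomology^0(Z, h^*\Gamma) \subseteq \Image(\tr_g^{H^0}). \]
Because $g$ is proper birational between normal varieties, the sheaf-level trace $\tr_g$ is the natural inclusion $g_*\omega_Y(\lceil g^*h^*\Gamma\rceil) \hookrightarrow \omega_Z(\lceil h^*\Gamma\rceil)$ (as in \autoref{ex:tracebirational} twisted by the Cartier divisor $h^*L$), and hence $\Image(\tr_g^{H^0}) = H^0\bigl(Z, g_*\omega_Y(\lceil g^*h^*\Gamma\rceil)\bigr)$.

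This containment on $Z$ is proved by chaining two inclusions through the test module. First, \autoref{thm.MainThm} applied to the pair $(Z, \Delta_Z)$ produces a single finite separable map $h_0\colon W_0 \to Z$ whose trace image, after twisting by $h^*L$, equals $\tau(Z; \Delta_Z)\tensor \O_Z(h^*L)$ as subsheaves of $\omega_Z(\lceil h^*\Gamma \rceil)$. The map on global sections factors through $H^0$ of the sheaf-level image, so $\Image(\tr_{h_0}^{H^0}) \subseteq H^0\bigl(Z, \tau(Z; \Delta_Z) \tensor \O_Z(h^*L)\bigr)$. Since $h_0$ is one of the finite maps in the intersection defining $\tauCohomology^0(Z, h^*\Gamma)$, we conclude
\[ \tauCohomology^0(Z, h^*\Gamma) \subseteq H^0\bigl(Z, \tau(Z; \Delta_Z)\tensor \O_Z(h^*L)\bigr). \]
Second, \autoref{prop.EasyContainmentViaAlteration} applied to the proper birational map $g$ and twisted by $h^*L$ gives the sheaf-level containment $\tau(Z; \Delta_Z)\tensor \O_Z(h^*L) \subseteq g_*\omega_Y(\lceil g^*h^*\Gamma\rceil)$. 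Taking $H^0$ and combining with the previous inclusion completes the argument.

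The main subtlety, and the reason the result is not immediate from the definitions, is the familiar pitfall that a sheaf-level equality of trace images need not descend to an equality on global sections. The proof sidesteps this by relying only on the one-sided automatic inclusion $\Image(\tr_{h_0}^{H^0}) \subseteq H^0(Z, \Image \tr_{h_0})$; this is enough because we are proving a containment of $\tauCohomology^0$ inside an image rather than an equality, and the reverse direction inside $\tauCohomology^0$ is absorbed into the intersection over finite maps that defines it.
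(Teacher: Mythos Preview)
Your argument follows the same skeleton as the paper's proof: Stein-factorize the alteration, use the main theorem to produce a finite cover whose sheaf-level trace image equals the test module, and combine this with the containment of the test module in the birational pushforward. The ingredients and the order in which they are assembled are essentially identical.

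There is one genuine slip. You invoke \autoref{lem.TauTransformationRuleForFiniteMaps} for the finite map $h\colon Z\to X$, but that lemma is stated and proved only for \emph{proper} normal varieties over $k$ (properness is used to guarantee that the defining intersection for $\tauCohomology^0(Z,h^*\Gamma)$ stabilizes as the image of a single finite cover). Theorem~\ref{thm.tauCohomologyViaAlterations} carries no properness hypothesis on $X$, so as written this step is not justified. The paper sidesteps the issue by never passing through $\tauCohomology^0(Z,h^*\Gamma)$ at all: having produced the finite cover $h_0\colon W_0\to Z$ in your step~5, one simply observes that the composite $h\circ h_0\colon W_0\to X$ is finite, so $\tauCohomology^0(X,\Gamma)\subseteq\Image(\tr_{h\circ h_0}^{H^0})$ directly from the definition, and then your factorization through $H^0(Z,\tau(Z;\Delta_Z)\otimes\O_Z(h^*L))\subseteq H^0(Y,\omega_Y(\lceil f^*\Gamma\rceil))$ shows $\Image(\tr_{h\circ h_0}^{H^0})\subseteq\Image(\tr_f^{H^0})$. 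This bypasses \autoref{lem.TauTransformationRuleForFiniteMaps} entirely and works without properness; once you make this adjustment your proof coincides with the paper's.
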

\begin{proof}
Certainly we have the containment $\tauCohomology^0(X, L - (K_X + \Delta)) \supseteq \bigcap_{f \: Y \to X}\left( \ldots \right)$ since this latter intersection intersects more modules than the one defining $\tauCohomology^0(X, L - (K_X + \Delta))$.  We need to prove the reverse containment.

Fix an alteration $f \: Y \to X$.  Set $Y \to[\alpha] W = {\sheafspec} (f_* \O_{Y}) \to[\beta] X$ to be the Stein factorization of $f$.  Certainly
\[
\tau(\omega_{W}, \beta^*(L - K_X - \Delta)) \subseteq \alpha_* \O_{Y}(\lceil K_{Y} + f^* (L - K_X - \Delta)\rceil).
\]
Choose a finite cover $h \: U \to W$ such that $\Tr_h \: \O_U(\lceil K_U - h^* \beta^*(L - K_X - \Delta) \rceil ) \to \O_W(\lceil K_W - \beta^*(L - K_X - \Delta)\rceil)$ has image $\tau(\omega_{W}, \beta^*(L - K_X - \Delta))$.
\[
\xymatrix{
& Y \ar[d]_{\alpha} \ar[dr]^f \\
U \ar[r]_h & W \ar[r]_{\beta} & X
}
\]
We now have the following factorization:
\[
\begin{split}
H^0(U, \O_U(\lceil K_U + h^* &\beta^*(L - K_X - \Delta) \rceil)) \to H^0(X, \beta_* \tau(\omega_{W}, \beta^*(L - K_X - \Delta))) \\
& \to H^0(X, \beta_* \alpha_* \O_{Y}(\lceil K_{Y} + f^* (L - K_X - \Delta)\rceil))\\
& = H^0(X, f_* \O_{Y}(\lceil K_{Y} + f^* (L - K_X - \Delta)\rceil)) \\
& = H^0(Y, \omega_Y(\lceil f^*(L - K_X - \Delta))\rceil) \\
& \to H^0(X, \omega_X(\lceil L - K_X - \Delta \rceil))
\end{split}
\]
And so we obtain the desired inclusion.
\end{proof}

\begin{theorem}
\label{thm.TransformationOfTestIdealsUnderDominantMaps}
Suppose that $f : Z \to X$ is an alteration between normal $F$-finite varieties over a field $k$, $\Delta$ is a $\bQ$-divisor on $X$ such that $K_X + \Delta$ is $\bQ$-Cartier and $L$ is any Cartier divisor on $X$.  Additionally suppose that either $X$ is affine or proper over $k$.

Then
\[
\Psi\Big( \tauCohomology^0 \big(Z, f^*(L - (K_X + \Delta))\big) \Big) = \tauCohomology^0\big(X, L - (K_X + \Delta)\big)
\]
where $\Psi$ is the map described in \eqref{eq.TraceOnCohomology}.

In particular, if $f$ is proper and birational and $X$ is affine, then this yields a transformation rule for the test ideal under proper birational morphisms.
\end{theorem}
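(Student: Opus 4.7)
The plan is to establish the two inclusions separately, leveraging \autoref{thm.tauCohomologyViaAlterations}, which characterizes $\tauCohomology^0(X, L-(K_X+\Delta))$ as an intersection of images of trace maps indexed by all alterations of $X$, together with the compatibility of trace with composition from \autoref{lem.CompatOfTraceBasic}(a). Write $\Gamma = L - (K_X + \Delta)$.

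\emph{The inclusion $\Psi(\tauCohomology^0(Z, f^*\Gamma)) \subseteq \tauCohomology^0(X, \Gamma)$.} Fix $t \in \tauCohomology^0(Z, f^*\Gamma)$. By \autoref{thm.tauCohomologyViaAlterations} it suffices to show that for every alteration $h \: W \to X$, the element $\Psi(t)$ lies in the image of $\tr_h$ on global sections. I construct an alteration $V$ dominating both $W$ and $Z$ over $X$, for instance by normalizing an irreducible component of $W \times_X Z$ whose generic point dominates both factors (such a component exists since both $W$ and $Z$ dominate $X$). This yields alterations $p \: V \to W$ and $q \: V \to Z$ with $h \circ p = f \circ q$. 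Since $t$ lies in $\Image(\tr_q)$, write $t = \tr_q(s)$ for some $s \in H^0(V, \omega_V(\lceil q^*f^*\Gamma\rceil))$. Applying \autoref{lem.CompatOfTraceBasic}(a) I compute $\Psi(t) = \tr_{f \circ q}(s) = \tr_{h \circ p}(s) = \tr_h(\tr_p(s))$, which lies in $\Image(\tr_h)$ as required.

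\emph{The inclusion $\supseteq$.} The key step is to show that the intersection defining $\tauCohomology^0(Z, f^*\Gamma)$ stabilizes at some single alteration $g_0 \: Y_0 \to Z$, i.e. $\tauCohomology^0(Z, f^*\Gamma) = \Image(\tr_{g_0})$. In the proper case $Z$ is proper over $k$, so $H^0(Z, \omega_Z(\lceil f^*\Gamma\rceil))$ is a finite-dimensional $k$-vector space and the directed family of subspaces $\Image(\tr_g)$ (which is filtered downward using common refinements $V \to Y_1 \times_Z Y_2$) stabilizes. In the affine case write $X = \Spec R$; then $f_*\omega_Z(\lceil f^*\Gamma\rceil)$ is coherent on $X$ by properness of $f$, so $H^0(Z, \omega_Z(\lceil f^*\Gamma\rceil)) = H^0(X, f_*\omega_Z(\lceil f^*\Gamma\rceil))$ is a finitely generated $R$-module; since $R$ is Noetherian and the images are $R$-submodules, Noetherianity again forces stabilization. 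Now given $u \in \tauCohomology^0(X, \Gamma)$, apply \autoref{thm.tauCohomologyViaAlterations} to the alteration $f \circ g_0 \: Y_0 \to X$ to obtain $u \in \Image(\tr_{f \circ g_0}) = \Psi(\Image(\tr_{g_0})) = \Psi(\tauCohomology^0(Z, f^*\Gamma))$, using the identity $\tr_{f \circ g_0} = \Psi \circ \tr_{g_0}$ from \autoref{lem.CompatOfTraceBasic}(a).

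\emph{Main obstacle.} The primary subtlety is verifying stabilization of the intersection defining $\tauCohomology^0(Z, f^*\Gamma)$ in the affine case; this is exactly what forces the hypothesis that $X$ be affine or proper over $k$, and it relies on coherence of $f_*\omega_Z(\lceil f^*\Gamma\rceil)$ combined with Noetherianity of the coordinate ring. Once stabilization is secured, everything else reduces to formal bookkeeping with the functoriality of the trace map and the characterization of $\tauCohomology^0(X, \Gamma)$ from \autoref{thm.tauCohomologyViaAlterations}.
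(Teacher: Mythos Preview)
Your argument for the inclusion $\Psi(\tauCohomology^0(Z,f^*\Gamma))\subseteq\tauCohomology^0(X,\Gamma)$ is correct and in fact more explicit than the paper's one-line citation of \autoref{thm.tauCohomologyViaAlterations}; the common-refinement construction $V$ is exactly what is needed.  The proper case of the reverse inclusion is also fine and agrees with the paper.

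There is, however, a genuine gap in your affine case.  You claim that because $H^0(Z,\omega_Z(\lceil f^*\Gamma\rceil))$ is a finitely generated $R$-module and $R$ is Noetherian, the downward-filtered family $\{\Image(\tr_g)\}_g$ must stabilize.  But Noetherianity is the \emph{ascending} chain condition; finitely generated modules over Noetherian rings can have infinite strictly descending chains of submodules (e.g.\ $\bZ\supsetneq 2\bZ\supsetneq 4\bZ\supsetneq\cdots$).  When $X$ is affine and $f$ is not finite, $Z$ is in general neither affine nor proper over $k$, so neither of your two stabilization mechanisms applies.  The paper's remark immediately after this theorem explicitly flags stabilization of $\tauCohomology^0$ as an open issue in general, so this is not a gap you can paper over.

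The paper avoids stabilization on $Z$ altogether in the affine case.  It takes the Stein factorization $Z\xrightarrow{\alpha} X'=\Spec S\xrightarrow{\beta} X$, where $\alpha$ is proper birational and $\beta$ is finite.  Since $\alpha$ is birational, $\tr_\alpha$ is an \emph{inclusion} $H^0(Z,\omega_Z(\lceil f^*\Gamma\rceil))\hookrightarrow\omega_S(\lceil\beta^*\Gamma\rceil)$.  By \autoref{prop.EasyContainmentViaAlteration} applied to the alteration $\alpha$ of the affine scheme $X'$, the parameter test module $\tau(\omega_S;\beta^*(K_X+\Delta))\otimes\O_{X'}(\beta^*L)$ lies inside this image, i.e.\ inside $H^0(Z,\ldots)$; the same proposition applied to $\alpha\circ g$ for any finite $g\colon W\to Z$ shows it lies inside each $\Image(\tr_g)$, hence inside $\tauCohomology^0(Z,f^*\Gamma)$.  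Finally, \autoref{prop.TransformationRuleOfTestIdeals} (together with the identification of $\tauCohomology^0(X,\Gamma)$ with the test module on the affine $X$) gives
\[
\Psi\big(\tau(\omega_S;\beta^*(K_X+\Delta))(\beta^*L)\big)=\Tr_\beta\big(\tau(\omega_S;\beta^*(K_X+\Delta))(\beta^*L)\big)=\tau(\omega_R;K_X+\Delta)(L)=\tauCohomology^0(X,\Gamma),
\]
which yields the desired inclusion $\tauCohomology^0(X,\Gamma)\subseteq\Psi(\tauCohomology^0(Z,f^*\Gamma))$ without ever needing the intersection on $Z$ to stabilize.
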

\begin{proof}
Certainly $\tauCohomology^0(X, L - (K_X + \Delta)) \supseteq \Psi\big(\tauCohomology^0 (Z, f^*(L - (K_X + \Delta)))\big)$ in either case by \autoref{thm.tauCohomologyViaAlterations}.

If $X$ is proper over $k$, then so is $Z$ and so $H^0(Z, \O_{Z}(\lceil K_Z + f^*(L - (K_X +  \Delta))\rceil))$ is a finite dimensional $k$-vector space.  It follows that $\tauCohomology^0 (Z, f^*(L - (K_X + \Delta)))$ is the image of a single map
\[
H^0\left(Z', \O_{Z'}(\lceil K_{Z'} + g^* f^* (L - K_X - \Delta)\rceil)\right) \to[\Tr_{g}] H^0\left(Z, \O_{Z}(\lceil K_{Z} + f^* (L - K_X - \Delta)\rceil)\right)
\]
for some finite cover $g : Z' \to Z$.  However, $f \circ g : Z' \to X$ is also an alteration, and it follows that $\tauCohomology^0(X, L - (K_X + \Delta))$ is contained in the image of
\[
H^0\left(Z', \O_{Z'}(\lceil K_{Z'} + g^* f^* (L - K_X - \Delta)\rceil)\right) \to[\Tr_{f \circ g}] H^0\left(X, \O_{X}(\lceil K_{X} + L - K_X - \Delta \rceil)\right)
\]
whose image is clearly $\Tr_{f} \big(\tauCohomology^0 (Z, f^*(L - (K_X + \Delta))) \big)$.

On the other hand, suppose $X = \Spec R$ is affine and observe that, without loss of generality, we may assume that $K_X + \Delta$ is effective.  Set $S = H^0(Y, \O_Y)$ and define $Y \to[\alpha] X' = \Spec S \to[\beta] X$ to be the Stein factorization of $f$.
Now, the global sections $H^0(Z, \O_Z(\lceil K_Z + f^*(L - (K_X + \Delta))\rceil))$ can be identified with elements of $\omega_S(L)$ because we assumed that $K_X + \Delta \geq 0$.  In particular, we have $\tau(\omega_S, \beta^*(L - K_X - \Delta)) \subseteq H^0(Z, \O_Z(\lceil K_Z - f^*(L - (K_X + \Delta))\rceil))$.  But $\Tr_{\beta} \big( \tau(\omega_S, \beta^*(L - K_X - \Delta))\big) = \tau(R, L - K_X - \Delta)$ and the other containment follows.
\end{proof}

\begin{remark}
In order to generalize the above result to arbitrary schemes, it would be helpful to know that the intersection defining $\tauCohomology^0(X, \Gamma)$ stabilized in general.
\end{remark}

\section{Surjectivities on cohomology}
\label{sec:surj-cohom}

In this section we show how the vanishing statements obtained in \cite{BhattThesis} and in \autoref{sec.NadelVanishing}, combined with the ideas of \autoref{sec.TransformationRulesForTestIdeals}, can be used to construct global sections of adjoint line bundles.  We are treating this current section as a proof-of-concept.  In particular, many of the statements can be easily generalized.  We leave the statements simple however in order to demonstrate the main ideas.
Consider the following prototypical application of Kodaira vanishing.

\begin{example}
Suppose that $X$ is a smooth projective variety in characteristic zero and that $D$ is an effective Cartier divisor on $X$.  Set $\sL$ to be an ample line bundle on $X$.  We have the following short exact sequence
\[
0 \to \omega_X \tensor \sL \to \omega_X(D) \tensor \sL  \to \omega_D \tensor \sL|_D \to 0 \, \, .
\]
Taking cohomology gives us
\[
0 \to H^0(X, \omega_X \tensor \sL ) \to H^0(X, \omega_X(D) \tensor \sL) \to H^0(D, \omega_D\tensor \sL|_D) \to H^1(X, \omega_X \tensor \sL ).
\]
Kodaira vanishing implies that $H^1(X, \omega_X \tensor \sL)$ is zero and so
\[
H^0(X, \omega_X(D)\tensor \sL) \to H^0(D, \omega_D\tensor \sL|_D )
\]
is surjective.
\end{example}

Consider now the same example (in characteristic zero) but do not assume that $X$ is smooth.

\begin{example}
\label{ex.KVVanishingImpliesSurjectivity}
Suppose that $X$ is a normal projective variety in characteristic zero and that $D$ is a reduced Cohen-Macaulay Cartier divisor on $X$.
These conditions are enough to imply that the natural map $\omega_X(D) \to \omega_D$ is surjective.

Set $\sL$ to be an ample (or even big and nef) line bundle on $X$.  Choose $\pi \: \tld X \to X$ to be a log resolution of $(X, D)$ and set $\tld D$ to be the strict transform of $D$ on $\tld X$.  We have the following diagram of exact triangles in $D^{b}_{\coherent}(X)$:
\[
\xymatrix{
0 \ar[r] & \pi_* \big( \omega_{\tld X}\tensor \pi^* \sL\big) \ar[r] \ar@{=}[d] & \pi_* \big(\omega_{\tld X}(\tld D) \tensor \pi^* \sL\big) \ar[r] \ar@{=}[d] & \pi_* \big( \omega_{\tld D}\tensor \pi^* \sL|_D\big) \ar[r] \ar@{=}[d] & 0\\
& \myR \pi_* \big( \omega_{\tld X}\tensor \pi^* \sL \big) \ar[r] \ar[d] & \myR \pi_* \big( \omega_{\tld X}(\tld D) \tensor \pi^* \sL \big) \ar[r] \ar[d] & \myR \pi_* \big( \omega_{\tld D} \tensor \pi^* \sL|_D \big) \ar[r]^-{+1} \ar[d] & \\
0 \ar[r] & \omega_X\tensor \sL  \ar[r] & \omega_X(D) \tensor \sL  \ar[r] & \omega_D\tensor \sL|_D  \ar[r] & 0
}
\]
The vertical equalities and the top right surjection are due Grauert-Riemenschneider vanishing \cite{GRVanishing}.
Because $X$ is not smooth, $H^1(X, \omega_X(L))$ is not necessarily zero, see for example \cite{ArapuraJaffeKodairaVanishingForSingular}.  However, $H^1(X, (\pi_* \omega_{\tld X}) \tensor \sL) = H^1(\tld X, \omega_{\tld X} \tensor \pi^* \sL )$ is zero by Kawamata-Viehweg vanishing since $\pi^* L$ is big and nef, \cite{KawamataVanishing, ViehwegVanishingTheorems}.  Thus we have the surjection:
\[
H^0(X, (\pi_* \omega_{\tld X}(\tld D)) \tensor \sL ) \to H^0(X, (\pi_* \omega_{\tld D}) \tensor \sL|_D) \, \, ,
\]
between submodules of $H^0(X, \omega_X(D)\tensor \sL)$ and $H^0(D, \omega_D\tensor \sL|_D )$.

Interestingly, $\pi_* \omega_{\tld D}$ is independent of the choice of embedding of $D$ into $X$ since $\pi_* \omega_{\tld D}$ is the multiplier module for $D$ by definition.  Even more, $H^0(D, \pi_* \omega_{\tld D} \tensor \sL|_D)$ only depends on the pair $(D, \sL|_D)$.

Furthermore, using the method of proof of \autoref{thm.TransformationRuleForMultProperDominant} below, it is easy to see that
\[
H^0(D, \pi_* \omega_{\tld D} \tensor \sL|_D) = \bigcap_{f \: E \to D} \Image\left( H^0(E, \omega_E \tensor f^*\sL|_D) \to H^0(D, \omega_D \tensor \sL|_D) \right)
\]
where the intersection runs over all regular alterations $f \: E \to D$.  In light of \autoref{thm.tauCohomologyViaAlterations} the subspace $H^0(D, \pi_* \omega_{\tld D} \tensor \sL|_D)$ may be viewed as an analog of $\tauCohomology^0(D, \omega_{\tld D} \tensor \sL|_D)$.  This inspires the remainder of the section.
\end{example}

In characteristic $p > 0$, Kodaira vanishing does not hold even on smooth varieties \cite{raynaud_contre-exemple_1978}.  However, we have the following corollary of \cite{BhattThesis}, \cf \autoref{cor.NadelVanishingForVarieties}.

\begin{theorem}
\label{thm.BaseSurjectivityOfVarieties}
Suppose that $D$ is a Cartier divisor on a normal proper $d$-dimensional variety $X$ and $\sL$ is a big and semi-ample line bundle on $X$. Using the natural map
\[
\gamma \: H^0(X, \omega_X(D) \tensor \sL) \to H^0(D, \omega_D \tensor \sL|_D)
\]
one has an inclusion
\[
\tauCohomology^0(D, \omega_D \tensor \sL|_D) \subseteq \gamma( \tauCohomology^0(X, \omega_X \tensor \sL(D)) ).
\]
In particular, if $\tauCohomology^0(D, \omega_D \tensor \sL|_D ) \neq 0$, then
$H^0(X, \omega_X(D) \tensor \sL) \neq 0.$
And if $\tauCohomology^0(D, \omega_D \tensor \sL|_D ) =  H^0(D, \omega_D \tensor \sL|_D)$ then $\gamma$ is surjective.
\end{theorem}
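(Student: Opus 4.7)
The plan is to mirror the characteristic zero argument sketched in Example~\ref{ex.KVVanishingImpliesSurjectivity}, substituting the Nadel-type vanishing after finite covers of \autoref{cor.NadelVanishingForVarieties} for Kawamata-Viehweg vanishing. The key new complication is that Nadel vanishing in positive characteristic only kills a cohomology map after passing to a finite cover, not the cohomology group itself, so I will employ two nested applications of the vanishing.

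First, using that $H^0$ on proper varieties is finite dimensional, the intersection defining $\tauCohomology^0(X, \omega_X \tensor \sL(D))$ is witnessed by a single sufficiently large cover; combining this with \autoref{cor.NadelVanishingForVarieties} by taking a common refinement produces a finite surjective morphism $f \: Y \to X$ from a normal variety $Y$ such that the trace $\Tr_f \: H^0(Y, \omega_Y(D_Y) \tensor f^*\sL) \to H^0(X, \omega_X(D) \tensor \sL)$ has image exactly $\tauCohomology^0(X, \omega_X \tensor \sL(D))$ (with $D_Y \colonequals f^*D$), and the induced map $H^1(Y, \omega_Y \tensor f^*\sL) \to H^1(X, \omega_X \tensor \sL)$ vanishes. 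The residue short exact sequence on $Y$,
\[
0 \to \omega_Y \tensor f^*\sL \to \omega_Y(D_Y) \tensor f^*\sL \to \omega_{D_Y} \tensor f^*\sL|_{D_Y} \to 0,
\]
together with its counterpart on $X$ --- both arising from the exact triangles attached to the Cartier divisor inclusions in the dualizing complex formalism of \autoref{sec.Duality} --- fit into a commutative ladder of long exact sequences linked by the various trace maps.

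Now given $s \in \tauCohomology^0(D, \omega_D \tensor \sL|_D)$, the transformation rule \autoref{lem.TauTransformationRuleForFiniteMapsDualizing} applied to the finite dominant restriction $f|_{D_Y} \: D_Y \to D$ lifts $s$ to an element $t \in \tauCohomology^0(D_Y, \omega_{D_Y} \tensor f^*\sL|_{D_Y})$. Next, apply \autoref{cor.NadelVanishingForVarieties} to $Y$ with the (still big and semi-ample) line bundle $f^*\sL$ to obtain a further finite cover $h \: Z \to Y$ for which $\Tr_h \: H^1(Z, \omega_Z \tensor (fh)^*\sL) \to H^1(Y, \omega_Y \tensor f^*\sL)$ vanishes, enlarging $h$ if necessary so that additionally $t = \Tr_{h|_{D_Z}}(t_Z)$ for some $t_Z \in H^0(D_Z, \omega_{D_Z} \tensor (fh)^*\sL|_{D_Z})$ with $D_Z \colonequals h^*D_Y$ (such $h$ exist because $t$ belongs to $\tauCohomology^0(D_Y, \ldots)$). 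By naturality of the connecting homomorphism along the trace of morphisms of short exact sequences, $\partial_Y(t) = \Tr_h(\partial_Z(t_Z)) = 0$. Thus $t$ lifts to some $\tilde t \in H^0(Y, \omega_Y(D_Y) \tensor f^*\sL)$, and then $\Tr_f(\tilde t)$ lies in $\tauCohomology^0(X, \omega_X \tensor \sL(D))$ by the defining property of $f$ and maps to $\Tr_{f|_{D_Y}}(t) = s$ under $\gamma$ by commutativity of the ladder.

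The main technical hurdle lies in verifying that $D_Y$ and $D_Z$, which need not be normal or Cohen-Macaulay, fit correctly into the second form of \autoref{def.TauCohomology} and that the trace maps between the residue sequences commute with the connecting homomorphisms as claimed; both are resolved through the dualizing complex framework of \autoref{sec.Duality}, noting that equidimensionality of $D_Y$ and $D_Z$ is automatic since $f$ and $f \circ h$ are finite surjective. The final assertions of the theorem --- nonvanishing of $H^0(X, \omega_X(D) \tensor \sL)$ whenever $\tauCohomology^0(D, \omega_D \tensor \sL|_D)$ is nonzero, and surjectivity of $\gamma$ when the latter fills $H^0(D, \omega_D \tensor \sL|_D)$ --- follow formally from the main containment.
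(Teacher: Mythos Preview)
Your overall strategy matches the paper's: choose a finite cover $f\:Y\to X$ realizing $\tauCohomology^0(X,\omega_X\otimes\sL(D))$ as a trace image, then a further cover $h\:Z\to Y$ to kill the relevant connecting map, and run a diagram chase through the adjunction long exact sequences linked by trace. The difference is not structural but in the ingredients you invoke, and there is a real gap there.

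The problem is your appeal to \autoref{cor.NadelVanishingForVarieties} for the $H^1$-vanishing. That corollary (via \autoref{thm.NadelInCharP} and ultimately \autoref{thm.MainThm}) implicitly requires $K_X+\Delta$ to be $\bQ$-Cartier on the base, but \autoref{thm.BaseSurjectivityOfVarieties} makes no $\bQ$-Gorenstein assumption on $X$, and your intermediate $Y$ is only normal, so you cannot apply the corollary to $Y$ either. Moreover, the conclusion of \autoref{cor.NadelVanishingForVarieties} concerns sheaf cohomology of canonical modules, whereas the long exact sequence you need comes from the exact triangle $\omega_{D_Y}^\mydot[-1]\to\omega_Y^\mydot(-D_Y)\to\omega_Y^\mydot$ and lives in hypercohomology of dualizing complexes; the displayed residue short exact sequence $0\to\omega_Y\to\omega_Y(D_Y)\to\omega_{D_Y}\to 0$ need not be right-exact when $Y$ is not Cohen--Macaulay. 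The paper sidesteps both issues by citing Bhatt's thesis directly: one kills $H^{d-1}(Y,f^*\sL^{-1})\to H^{d-1}(Z,g^*f^*\sL^{-1})$ (no $\bQ$-Gorenstein hypothesis needed, only big and semi-ample) and then dualizes to obtain the vanishing of $\bH^{1-d}(Z,g^*f^*\sL\otimes\omega_Z^\mydot)\to\bH^{1-d}(Y,f^*\sL\otimes\omega_Y^\mydot)$, which is exactly the map appearing in the hypercohomology ladder. Your acknowledgment that the ``technical hurdle'' is ``resolved through the dualizing complex framework'' is correct in spirit, but the specific vanishing input you cite does not deliver what that framework requires.
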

\begin{proof}
Using \autoref{lem.TauCanBeDefinedUsingDualizing}, set $f : Y \to X$ to be a finite cover of $X$ such that $\tauCohomology^0(X, \omega_X \tensor \sL(D))$ is equal to
\[
\Image\left( \bH^0(Y, \omega_Y^{\mydot}[-d] \tensor f^* \sL(D)) \to[\tr_{f^{\mydot}}] \bH^0(X, \omega_X^{\mydot}[-d] \tensor \sL(D)) \right)
\]
noting that $\bH^0(X, \omega_X^{\mydot}[-d] \tensor \sL(D)) \cong H^0(X, \omega_X \tensor \sL(D))$.
By \cite[Proposition 5.5.3]{BhattThesis}, there exists a finite cover $g \: Z \to Y$ such that $H^{d-1}(Y, f^* \sL^{-1}) \to H^{d-1}(Z, g^* f^* \sL^{-1})$ is the zero map.  Therefore the dual map $\bH^{1 - d}(Z, g^* f^* \sL \tensor \omega_Y^{\mydot}) \to \bH^{1-d}(Y, f^* \sL \tensor \omega_X^{\mydot})$ is zero.

Set $D_Y = f^* D$ and $D_Z = g^* f^* D$.  Note that $D_Y$ or $D_Z$ may not be normal or even reduced, even if $D$ is.  They are however equidimensional.  Then there is a map between long exact sequences:
\[
\tiny
\xymatrix@C=10pt{
0 \ar[r] & \bH^{-d}(Z, g^*f^* \sL \tensor \omega_Z^{\mydot}) \ar[r] \ar[d] & \bH^{-d}(Z, g^*f^* \sL \tensor \omega_Z^{\mydot}(D_Z) )\ar[r] \ar[d] & \bH^{1-d}(D_Z, g^* f^* \sL|_D \tensor \omega_{D_Z}^{\mydot} ) \ar[r] \ar[d]_{\beta} & \bH^{1 - d}(Z, g^* f^* \sL \tensor \omega_Z^{\mydot}) \ar[d]_0 \\
0 \ar[r] & \bH^{-d}(Y, f^* \sL \tensor \omega_Y^{\mydot}) \ar[r] \ar[d] & \bH^{-d}(Y, f^* \sL \tensor \omega_Y^{\mydot}(D_Y) )\ar[r] \ar[d]_{\nu} & \bH^{1-d}(D_Y, f^* \sL|_D \tensor \omega_{D_Y}^{\mydot} ) \ar[r]^-{\delta} \ar[d]_{\alpha} & \bH^{1 - d}(Y, f^* \sL \tensor \omega_Y^{\mydot}) \ar[d] \\
0 \ar[r] & \bH^{-d}(X, \sL \tensor \omega_X^{\mydot}) \ar[r] \ar@{=}[d] & \bH^{-d}(X, \sL \tensor \omega_X^{\mydot}(D) )\ar[r]_{\gamma} \ar@{=}[d] & \bH^{1-d}(D, \sL|_D \tensor \omega_{D}^{\mydot} ) \ar[r] \ar@{=}[d] & \bH^{1 - d}(X, \sL \tensor \omega_X^{\mydot})\\
0 \ar[r] & H^{0}(X, \sL \tensor \omega_X) \ar[r] & H^{0}(X, \sL \tensor \omega_X(D)) \ar[r]_{\gamma} & H^{0}(D, \sL|_D \tensor \omega_D)
}
\]
where the vertical equalities are obtained from the spectral sequences computing the middle lines.  Note we identify $f$ with its restriction $f|_{D_Y}$ and $g$ with $g|_{D_Z}$

Choose the $h : E \to D_Z$ to be normalization of the $(D_Z)_{\red}$ and notice we have a map
\[
\begin{split}
H^0(E, \omega_E \tensor h^* g^* f^* \sL|_D )& =  \bH^{1-d}(E, \omega_E^{\mydot} \tensor h^* g^* f^* \sL|_D )\\
& \to  \bH^{1-d}(D_Z, \omega_{D_Z}^{\mydot} \tensor g^* f^* \sL|_D ) \\
& \to[\alpha \circ \beta]  \bH^{1-d}(D, \omega_{D}^{\mydot} \tensor \sL|_D)=  H^0(D, \omega_D \tensor \sL|_D ).
\end{split}
\]
The image of this map contains $\tauCohomology^0(D, \omega_D \tensor \sL|_D)$, and thus the image of $\alpha \circ \beta$ also contains $\tauCohomology^0(D, \omega_D \tensor \sL|_D)$.
Therefore, if we view $d \in \tauCohomology^0(D, \omega_D \tensor \sM)$ as an element of $\bH^{1-d}(D, \sL|_D \tensor \omega_{D}^{\mydot} )$, it must have some pre-image $z \in \bH^{1-d}(D_Z, f^* \sL|_D \tensor \omega_{D_Z}^{\mydot} )$.  The diagram implies that $\delta(\beta(z)) = 0$.  Thus $\beta(z)$ is an image of some element in $y \in H^{0}(X, f^* \sL \tensor \omega_Y^{\mydot}(D_Y))$.  It follows that $\nu(y) \in \tauCohomology^0(X, \omega_X \tensor \sL(D))$ and so $d = \gamma(\nu(y))$ which completes the proof.
\end{proof}

\begin{remark}
If we knew that the intersection defining $\tauCohomology^0(X, L - K_X - \Delta)$ stabilized, then the previous result could be generalized to arbitrary equidimensional schemes (not just those which are of finite type over a field).  Even without this hypothesis, the argument above still implies that $\tauCohomology^0(D, \omega_D \tensor \sL|_D) \subseteq \gamma( H^0(X, \omega_X(D) \tensor \sL) ).$  The same statement holds for \autoref{thm.ExampleTheoremSurjectivity}.
\end{remark}

\begin{remark}
We expect that one can obtain more precise surjectivities involving characteristic $p > 0$ analogs of adjoint ideals.  In particular, $\tauCohomology^0(X, \omega_X \tensor \sL)$ is not the right analog of the term $H^0(X, \pi_* \omega_{\tld X}(\tld D))$ appearing in \autoref{ex.KVVanishingImpliesSurjectivity} above.
\end{remark}

We also show that this method can be generalized with $\bQ$-divisors.

\begin{theorem}
\label{thm.ExampleTheoremSurjectivity}
Suppose $X$ is a normal $F$-finite variety which is proper over a field $k$ and that $D$ is a Cartier divisor on $X$.  Additionally, suppose that $\Delta$ is a $\bQ$-divisor on $X$ with no common components with $D$ and such that $K_X + \Delta$ is $\bQ$-Cartier.  Finally suppose that $L$ is a Cartier divisor on $X$ such that $L - (K_X + D + \Delta)$ is big and semi-ample.  Then the natural map
\begin{multline*}
H^0(X, \O_X(\lceil K_X + D + L - (K_X + D + \Delta) \rceil)  =  H^0(X, \O_X(\lceil L - \Delta \rceil) \\
 \to[\gamma]  H^0(D, \O_D(\lceil K_D + L|_D - (K_D + \Delta|_D) \rceil))  =  H^0(D, \O_D(L|_D - \lfloor \Delta \rfloor|_D ).
\end{multline*}
yields an inclusion
\[
\tauCohomology^0(D, L|_D - (K_D + \Delta|_D)) \subseteq \gamma\left(\tauCohomology^0(X, D + L - (K_X + D + \Delta))\right)
\]
noting that $\tauCohomology^0(D, L|_D - (K_D + \Delta|_D)) \subseteq H^0(D, \O_D(\lceil L|_D - \lfloor \Delta \rfloor|_D \rceil))$.
\end{theorem}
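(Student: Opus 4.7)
The plan is to adapt the argument that established \autoref{thm.BaseSurjectivityOfVarieties} to the $\bQ$-divisor setting, tracking the rounding of $\Delta$ carefully through the diagrams. Set $d := \dim X$ and $M := L - (K_X + D + \Delta)$, a big and semi-ample $\bQ$-divisor; note that $D + L - (K_X + D + \Delta) = L - (K_X + \Delta)$, so the target space $\tauCohomology^0(X, D + L - (K_X + D + \Delta))$ sits inside $H^0(X, \O_X(\lceil L - \Delta\rceil))$.

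First, using \autoref{thm.tauCohomologyViaAlterations} together with \autoref{lem.PullBackQCartierViaSeparable} and the finite-dimensionality of the ambient $H^0$ on the proper variety $X$ (which ensures that the defining decreasing intersection stabilizes), I would produce a finite dominant map $f \colon Y \to X$ from a normal $F$-finite variety $Y$ such that $f^*\Delta$ is integral, $f^*(K_X + \Delta)$ is Cartier, and
\[
\tauCohomology^0(X, L - (K_X + \Delta)) = \Image\Bigl(H^0\bigl(Y, \O_Y(\lceil K_Y + f^*(L - K_X - \Delta)\rceil)\bigr) \xrightarrow{\tr_f} H^0\bigl(X, \O_X(\lceil L - \Delta\rceil)\bigr)\Bigr).
\]
Next, invoking \cite[Proposition 5.5.3]{BhattThesis} (the Bhatt vanishing used in \autoref{thm.BaseSurjectivityOfVarieties}), I would choose a further finite surjective cover $g \colon Z \to Y$ from a normal $Z$ with the property that the pullback $H^{d-1}(Y, f^*\O_X(-M)) \to H^{d-1}(Z, (f \circ g)^*\O_X(-M))$ is the zero map; Grothendieck--Serre duality then translates this into the vanishing required to lift sections along the adjunction connecting map at the $Y$-level.

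Let $D_Y := f^*D$ and $D_Z := g^*D_Y$ (possibly non-reduced Cartier divisors), and $h \colon E \to (D_Z)_{\mathrm{red}}$ the normalization. Exactly as in the proof of \autoref{thm.BaseSurjectivityOfVarieties}, I would form the commutative diagram of three long exact sequences (one for each of $X$, $Y$, $Z$) obtained by applying the global-sections functor to the exact triangle $\omega_\star^{\mydot}(-D_\star) \to \omega_\star^{\mydot} \to \omega_{D_\star}^{\mydot}$ appropriately twisted by (pullbacks of) $\O_X(L - \lfloor \Delta\rfloor)$. Given $e \in \tauCohomology^0(D, L|_D - (K_D + \Delta|_D)) \subseteq H^0(D, \O_D(L|_D - \lfloor \Delta|_D\rfloor))$, the finite alteration $E \to D_Z \to D_Y \to D$ produced above supplies a preimage $z$ at the level of $D_Z$; by the vanishing arranged above, the image of this preimage in the $Z$-row is zero, so it lifts along the adjunction connecting map to an element $y$ in the $\bH^{-d}$-term of the $Y$-row. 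The trace image $\nu(y) \in H^0(X, \O_X(\lceil L - \Delta\rceil))$ then lies in $\tauCohomology^0(X, D + L - (K_X + D + \Delta))$ by the defining property of $f$, and a straightforward commutativity check shows $\gamma(\nu(y)) = e$.

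The main obstacle I anticipate is the meticulous tracking of the $\bQ$-divisor $\Delta$ through the pullbacks, restrictions, and rounding operations. In particular, one must verify the identity $\lceil L - \Delta\rceil|_D = L|_D - \lfloor \Delta|_D\rfloor$, which uses that $D$ is Cartier and that $\Delta$ has no components in common with $D$, along with the analogous identity after pullback to $Y$ (where $f^*\Delta$ was arranged to be integral). These identifications ensure that the three rows of long exact sequences genuinely align termwise, so that the diagram chase produces an element in the precise space $\tauCohomology^0(X, D + L - (K_X + D + \Delta))$ rather than merely in $H^0(X, \O_X(\lceil L - \Delta\rceil))$. A secondary technical issue is that $D$, $D_Y$, and $D_Z$ need not be Cohen--Macaulay (or even reduced), which is why one must work throughout with the dualizing complexes $\omega^{\mydot}$ rather than just the canonical sheaves.
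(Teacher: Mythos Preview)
Your approach is correct and follows the same underlying strategy as the paper, but the paper organizes the argument more economically. Rather than redoing the three-row diagram chase from \autoref{thm.BaseSurjectivityOfVarieties} with $\Delta$ threaded through, the paper simply passes to a finite cover $h \colon W \to X$ on which $h^{*}(K_{X}+\Delta)$ becomes Cartier, uses the transformation rules for $\tauCohomology^{0}$ under finite maps (Lemmas~\ref{lem.TauTransformationRuleForFiniteMaps} and~\ref{lem.TauTransformationRuleForFiniteMapsDualizing}) to identify both $\tauCohomology^{0}(D, L|_{D}-(K_{D}+\Delta|_{D}))$ and $\tauCohomology^{0}(X, D+L-(K_{X}+D+\Delta))$ as trace images from the corresponding objects on $D_{W}$ and $W$, and then cites \autoref{thm.BaseSurjectivityOfVarieties} verbatim on $W$ with the line bundle $\O_{W}(h^{*}(L-(K_{X}+D+\Delta)))$ playing the role of $\sL$. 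Your version instead inlines the proof of \autoref{thm.BaseSurjectivityOfVarieties} at the cover level; this works, but it forces you to track an $X$-row in the big diagram where the twist is not a priori the right object, and to reprove the rounding identifications that the paper sidesteps entirely by working where everything is already integral.
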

\begin{proof}
Let us first point out that $\gamma$ is induced from the restriction map
\begin{multline*}
\O_X(\lceil K_X + D + L - (K_X + D + \Delta) \rceil) \\
\to  \O_D(\lceil K_X + D + L - (K_X + D + \Delta) \rceil|_D)
=  \O_D(K_D + L|_D - (K_D + \lfloor \Delta \rfloor|_D)).
\end{multline*}
Now choose a finite cover $h : W \to X$, with $W$ normal, such that $h^* (K_X + \Delta)$ is an integral Cartier divisor and set $D_W = h^* D$. Note $D_W$ is not necessarily normal or even reduced (it is however unmixed and thus equidimensional).  We have the diagram
\[
\xymatrix{
h_* \omega_W\big(h^* (D + L - (K_X + D + \Delta))\big) \ar[d] \ar[r] & h_* \omega_{D_W}\big(h^*(L - (K_X + D + \Delta))\big) \ar[d]_{\xi} \\
\omega_X(\lceil D + L - (K_X + D + \Delta) \rceil) \ar[r] & \omega_D(\lceil L|_D - (K_D + \Delta) \rceil|_D)
}
\]
of which we take global sections and then apply the method of Lemmas \ref{lem.TauTransformationRuleForFiniteMaps} and \ref{lem.TauTransformationRuleForFiniteMapsDualizing} to conclude that the image \mbox{$\xi \Big( \tauCohomology^0\big(D_W, \omega_{D_W} \tensor \O_W(h^*(L - (K_X + D + \Delta))) \big) \Big)$} is equal to
\begin{align}
\tauCohomology^0\big(D, L|_D - (K_D + \Delta|_D)\big) & \subseteq  H^0\big(D, \O_D(K_D + L|_D - (K_D + \Delta|_D))\big) \\
& \subseteq H^0\big(D, \omega_D(\lceil L|_D - (K_D + \Delta) \rceil|_D)\big).
\end{align}
Likewise $\tauCohomology^0(X, D + L -(K_X + D + \Delta))$ is the image of $\tauCohomology^0(W, h^*(D + L -(K_X + D + \Delta)))$.  Thus it is sufficient to show that via the map
\[
H^0\big(W, \omega_W(h^* (D + L - (K_X + D + \Delta))) \big) \to H^0
\big(D_W, \omega_{D_W}(h^*(L - (K_X + D + \Delta)))\big),
\]
each element of $\tauCohomology^0\big(D_W, \omega_{D_W} \tensor \O_W(h^*(L - (K_X + D + \Delta))) \big)$ is the image of an element of $\tauCohomology^0\big(W, \omega_W  \tensor \O_W(h^*(D + L -(K_X + D + \Delta)))\big)$.
We have just reduced to the setting of \autoref{thm.BaseSurjectivityOfVarieties} and the result follows.
\end{proof}

\section{Transformation rules for multiplier ideals}
\label{sec.TransformationRulesForMultiplier}

It still remains to be proven that the multiplier ideal in characteristic zero can be characterized as in our Main Theorem, for which we need to explore the behavior of multiplier ideals under proper dominant maps.  We further analyze the behavior of multiplier ideals under alterations in arbitrary characteristic, which leads to an understanding of when (and why) the classical characteristic zero transformation rule \eqref{eq:multfiniteint} for the multiplier ideal under finite maps may fail in positive characteristic.

\renewcommand{\J}{\mathcal{J}}

Suppose that $A$ is a normal ring in arbitrary characteristic and that $(X = \Spec A, \Delta)$ is an affine pair such that $K_X + \Delta$ is $\bQ$-Cartier.  As discussed in \autoref{sec.MultiplierRational}, $\J(X;\Delta)$ is only known to be quasi-coherent assuming a theory of resolution of singularities is at hand.  Nonetheless, it is always a sheaf of (fractional) ideals, and in this section we will use the notation $\J(A; \Delta) := H^{0}(X,\J(X; \Delta)) $ to denote the corresponding ideal of global sections.



An important and useful perspective, largely in the spirit of \cite{LipmanAdjointsOfIdeals}, is to view
\autoref{def:multiplierideals} as a collection of valuative conditions for membership in the multiplier ideal $\J(A; \Delta)$.
Specifically, suppose $E$ is a prime divisor on a normal proper birational modification $\theta \: Z \to X$.  After identification of the function fields $K =\Frac(A) = K(X) = K(Z)$, $E$ gives rise to a valuation $\ord_{E}$ centered on $X$.  The valuation ring of $\ord_{E}$ is simply the local ring $\O_{Z,E}$. Thus, we have that $\J(A;\Delta)$ can be described as the fractional ideal inside of $K(X)$ given by

\begin{align*}
\J(A;\Delta) &= \displaystyle\bigcap_{\substack{\theta \: Z \to X \\ \text{Prime } E \text{ on } Z}}
\O_{Z,E}(\lceil K_{Z} - \theta^{*}(K_{X} + \Delta)\rceil)\\
&= \left\{ \, f \in K \, \, \left| \, \, \begin{array}{c}\ord_{E}(f) \geq \ord_{E}(\lfloor \theta^{*}(K_{X} + \Delta) -K_{Z} \rfloor) \medskip \\  \text{for all $\theta \: Z \to X$ and all prime $E$ on $Z$} \end{array} \right. \, \right\}
\end{align*}


We now show how to generalize the characteristic zero transformation rule for multiplier ideals under finite maps \eqref{eq:multfiniteint} so as to incorporate the trace map as in \eqref{eq:multfinitetrace}.
Furthermore, by working in arbitrary characteristic, we also recover both of these transformation rules in positive characteristic for separable finite maps of degree prime to the characteristic.

\begin{theorem}
\label{thm:BehaviorOfTheMultiplierIdealViaTrace}
Suppose that $\pi \: \Spec B = Y \to X = \Spec A$ is a finite dominant map of normal integral schemes of any characteristic and that $(X, \Delta_X)$ is a pair such that $K_X + \Delta_X$ is $\bQ$-Cartier.  Define $\Delta_Y = \Delta_X - \Ram_{\pi}$. Then
\[
\Tr(\J(B; \Delta_{Y})) \subseteq \J(A; \Delta_{X}) \, \, .
\]
 Furthermore, if the field trace map $\Tr \: \Frac(B) \to \Frac(A)$ satisfies $\Tr(B) = A$ (\textit{e.g.} the degree of $\pi$ is prime to the characteristic), then
\[
\Tr(\J(B; \Delta_Y)) = \J(B; \Delta_{Y}) \cap K(A) = \J(A; \Delta_X) .
\]
\end{theorem}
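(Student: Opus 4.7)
The plan is to combine the valuative characterization of the multiplier ideal with the compositional behavior of the Grothendieck trace map (\autoref{lem.CompatOfTraceBasic}(a)) and the projection formula. The central construction, used throughout, is the following: given any proper birational morphism $\theta \: Z \to X$ with $Z$ normal, let $W$ be the normalization of a dominant component of the fibre product $Y \times_X Z$, producing a commutative square
\[
\xymatrix{
W \ar[r]^{\phi} \ar[d]_{\rho} & Z \ar[d]^{\theta}\\
Y \ar[r]_{\pi} & X
}
\]
in which $\phi$ is finite dominant and $\rho$ is proper birational. A direct divisorial computation, using $K_Y = \pi^*K_X + \Ram_\pi$ and $\Delta_Y = \pi^*\Delta_X - \Ram_\pi$, yields the crucial identity $\rho^*(K_Y + \Delta_Y) = \phi^*\theta^*(K_X + \Delta_X)$ of $\bQ$-Cartier divisors on $W$, which is what links the valuative data on the two sides.

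To prove $\Tr(\J(B;\Delta_Y)) \subseteq \J(A;\Delta_X)$, I take $f \in \J(B;\Delta_Y)$. By the definition of the multiplier ideal as an intersection over all proper birational modifications of $Y$, the element $f$ lies in $\rho_*\omega_W(-\lfloor \rho^*(K_Y + \Delta_Y)\rfloor)$, which by the identity above equals $\rho_*\omega_W(-\lfloor \phi^*\theta^*(K_X + \Delta_X)\rfloor)$. Applying the finite-case trace $\Tr_\phi$ from \autoref{prop.TraceInSomeGenerality} sends this into $\omega_Z(-\lfloor \theta^*(K_X + \Delta_X)\rfloor)$, and then the compositional identity $\Tr_\pi \circ \rho_*\Tr_\rho = \Tr_\theta \circ \phi_*\Tr_\phi$ from \autoref{lem.CompatOfTraceBasic}(a) shows that $\Tr_\pi(f)$ lies in $\theta_*\omega_Z(-\lfloor \theta^*(K_X + \Delta_X)\rfloor)$. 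Taking the intersection over all such $\theta$ yields $\Tr(f) \in \J(A;\Delta_X)$.

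For the strengthening under the hypothesis $\Tr(B) = A$, I will address the three required identities in turn. The inclusion $\J(A;\Delta_X) \subseteq \J(B;\Delta_Y) \cap K(A)$ holds without any surjectivity hypothesis and reduces to a valuative estimate on $W$: for any prime $F$ of $W$ lying over a prime $E$ of $Z$ with ramification index $e$ and $s := \ord_F(\Ram_\phi) \geq e - 1$, setting $a := \ord_E(\theta^*(K_X + \Delta_X) - K_Z)$, the membership $g \in \J(A;\Delta_X)$ gives $\ord_E(g) \geq \lfloor a \rfloor$, which after multiplying by $e$ implies $\ord_F(g) \geq \lfloor ea \rfloor - s$ precisely because the discrepancy $\lfloor ea \rfloor - e\lfloor a \rfloor$ is always at most $e - 1$. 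For the reverse containment $\J(B;\Delta_Y) \cap K(A) \subseteq \J(A;\Delta_X)$, I observe that $\Tr|_{K(A)}$ is multiplication by $n := [K(B):K(A)]$ and that $nA = \Tr(A) \subseteq \Tr(B) = A$ forces $1 = na$ for some $a \in A$, so $n$ is a unit; thus for any $g$ in the intersection, $ng = \Tr(g) \in \J(A;\Delta_X)$ by the first part, whence $g = n^{-1}\Tr(g) \in \J(A;\Delta_X)$. Finally, for $\J(A;\Delta_X) \subseteq \Tr(\J(B;\Delta_Y))$, I pick $u \in B$ with $\Tr(u) = 1$; the same ramification estimate (now using $\ord_F(u) \geq 0$ since $u \in B \subseteq \O_W$) shows $ug \in \J(B;\Delta_Y)$ for any $g \in \J(A;\Delta_X)$, and the projection formula gives $\Tr(ug) = g\Tr(u) = g$.

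The main technical obstacle is the careful bookkeeping of floor functions and ramification contributions in the valuative estimate: the trivial inequality $\ord_F(\Ram_\phi) \geq e - 1$ is only just strong enough to absorb the discrepancy $\lfloor ea \rfloor - e\lfloor a \rfloor$, and aligning these bounds simultaneously across all models $W$ requires some vigilance. Verifying the divisorial identity $\rho^*(K_Y + \Delta_Y) = \phi^*\theta^*(K_X + \Delta_X)$ should then be routine, but one must still track the interplay between the relative canonical divisors $\Ram_\rho$ and $\Ram_\phi$ and the pullback of $\Ram_\pi$ on $W$.
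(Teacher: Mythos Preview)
Your argument for the first inclusion $\Tr(\J(B;\Delta_Y)) \subseteq \J(A;\Delta_X)$ is correct and in fact cleaner than the paper's: where the paper localizes at a prime $E$ on $Z$, writes $\Tr = \Phi(c \cdot \blank)$ for a generator $\Phi$ of $\Hom_R(S,R)$ with $\Div_W(c) = \Ram_\rho$, and then checks explicitly that $cr^{\lambda_E}f \in S$, you bypass this by invoking \autoref{prop.TraceInSomeGenerality} for $\phi \: W \to Z$ together with the functoriality of trace along the two factorizations $\pi \circ \rho = \theta \circ \phi$. Your inclusion (a), that $\J(A;\Delta_X) \subseteq \J(B;\Delta_Y)$, and your step (c) are also correct and essentially match the paper's argument $\J(A;\Delta_X)\cdot \pi_*\O_Y \subseteq \pi_*\J(B;\Delta_Y)$.

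There is, however, a genuine error in your step (b). You claim that $\Tr(B) = A$ forces $n = [K(B):K(A)]$ to be a unit in $A$, arguing via ``$nA = \Tr(A) \subseteq \Tr(B) = A$''. This chain only gives $nA \subseteq A$, which is vacuous. In fact the implication is false: take $A = k[t]$ with $\Char k = 2$ and $B = A[x]/(x^2 - x - t)$ (an Artin--Schreier extension). Then $B$ is normal, $\Tr(a+bx) = b$, so $\Tr(B) = A$, yet $n = 2 = 0$ in $A$. Thus you cannot divide by $n$ to recover $g$ from $\Tr(g) = ng$.

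The fix is immediate and uses exactly the idea you deploy in (c): pick $u \in B$ with $\Tr(u) = 1$. If $g \in \J(B;\Delta_Y) \cap K(A)$ then $ug \in \J(B;\Delta_Y)$ because $\J(B;\Delta_Y)$ is a $B$-module, and $\Tr(ug) = g\,\Tr(u) = g$ since $g \in K(A)$; hence $g \in \Tr(\J(B;\Delta_Y)) \subseteq \J(A;\Delta_X)$ by the first part. This is precisely how the paper closes the chain of inclusions.
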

\begin{proof}
Suppose $f \in \J(B; \Delta_{Y})$.  Fix a prime divisor $E$ on a normal proper birational modification $\theta \: Z \to X$.  Consider the discrete valuation ring $R = \O_{Z,E}$, viewed as a subring of $K(A)$, and let $r \in K(A)$ be a uniformizer for $R$.  Denote by $S$ the integral closure of $R$ inside of $K(B)$.  Then $S$ can also be realized in the following manner.  Let $W$ be the normal scheme fitting into a commutative diagram
\[  \xymatrix{
W    \ar[d]_{\rho}    \ar[r]^{\eta}    &     Y   \ar[d]^{\pi}   \\
Z    \ar[r]_{\theta}                 &     X
} \]
where $\rho$ is finite and $\eta$ is birational (that is, take $W$ to be the normalization of the relevant irreducible component of $Y \times_X Z$).  Let $E_{1}, \ldots, E_{k}$ be the prime divisors on $W$ mapping onto $E$.  Then we have $S = \bigcap_{i=1}^{k} \O_{W,E_{i}}$, where again we have considered each $\O_{W, E_{i}}$ as a subring of $K(B)$.  In particular, for $g \in K(B)$, we have $g \in S$ if and only if $\ord_{E_{i}}(g) \geq 0$ for all $i = 1, \ldots, k$.

Let $\Phi$ be a generator for the rank one free $S$-module $\Hom_{R}(S,R)$.  If we write $\Tr \: S \to R$ as $\Tr(\blank) = \Phi(c \cdot \blank)$, we know from \cite[Proposition 4.8]{SchwedeTuckerTestIdealFiniteMaps} that $\divisor_{W}(c) = \Ram_{\rho}$ so that
\[
\ord_{E_{i}}(c) = \ord_{E_{i}}(K_{W} - \rho^{*}K_{Z}) \, \, .
\]
Let $\lambda_{E} = \ord_{E}(\lceil K_{Z} - \theta^{*}(K_{X} + \Delta_{X}) \rceil)$ and consider $g = cr^{\lambda_{E}}f$.  Since $f \in \J(B; \Delta_{Y})$, it follows that $\ord_{E_{i}}(f) + \lceil K_{W} - \eta^{*}(K_{Y} + \Delta_{Y}) \rceil \geq 0$ and hence
\begin{align*}
\ord_{E_{i}}(g) &\geq  \ord_{E_{i}}( K_{W} - \rho^{*}K_{Z} + \rho^{*}\lceil K_{Z} - \theta^{*}(K_{X} + \Delta_{X}) \rceil - \lceil K_{W} - \eta^{*}(K_{Y} + \Delta_{Y}) \rceil) \\
& =  \ord_{E_{i}}(\rho^{*} \lceil - \theta^{*}(K_{X} + \Delta_{X}) \rceil - \lceil - \eta^{*}(K_{Y} + \Delta_Y) \rceil) \\
& =  \ord_{E_{i}}(\rho^{*} \lceil - \theta^{*}(K_{X} + \Delta_{X}) \rceil - \lceil - \eta^{*}(K_{Y} + \pi^{*}\Delta_{X} - (K_{Y} - \pi^{*}K_{X})) \rceil) \\
& =  \ord_{E_{i}}( \rho^{*} \lceil - \theta^{*}(K_{X} + \Delta_{X}) \rceil - \lceil \rho^{*} ( - \theta^{*}(K_{X} + \Delta_{X}))\rceil) \\
& \geq  0.
\end{align*}
It now follows that $g \in S$, and thus $\Phi(g) = r^{\lambda_{E}}\Tr(f) \in R$.  In other words, we have
\begin{align*}
  \ord_{E}(\Phi(g)) & =  \ord_{E}(\Tr(f)) + \ord_{E}(r^{\lambda_{E}}) \\
& =  \ord_{E}(\Tr(f)) + \ord_{E}( \lceil K_{Z} - \theta^{*}(K_{X} + \Delta_{X} \rceil)    \geq 0
\end{align*}
and we conclude that $\Tr(f) \in \J(A; \Delta_{X})$ and thus $\Tr(\J(B; \Delta_{Y})) \subseteq \J(B; \Delta_{X})$ as desired.

Note that every divisorial valuation $\nu \: K(B) \setminus \{0\} \to \Z$ centered on $Y$ can be realized as $\nu = \ord_{E_{i}}$ as in the setup above.  Indeed, the restriction $\nu$ to $K(A)$ gives rise to a discrete valuation ring whose residue field has transcendence degree $(\dim(Y) -1) = (\dim(X) - 1)$ over $\Lambda$; see \cite[Chapter VI, Section 8]{Bourbaki1998}.  By Proposition 2.45 in \cite{KollarMori}, this valuation ring can be realized as $\O_{Z,E}$ for some prime divisor $E$ on $\theta \: Z \to X$ as above, so that $\nu = \ord_{E_{i}}$ for some $i$.

Let us now argue that $\J(A; \Delta_{X}) \cdot \pi_* \O_{Y} \subseteq \pi_* \J(B; \Delta_{Y})$.  Suppose $h \in \J(A; \Delta_{X})$.  We have by assumption
\begin{align*}
0 &\leq \ord_{E}(h) + \ord_{E}(\lceil K_{Z} - \theta^{*}(K_{X}+\Delta_{X})\rceil)
\end{align*}
whence it follows from \cite[Chapter IV, Proposition 2.2]{Hartshorne}, that
\begin{align*}
  0 & \leq  \ord_{E_{i}}(h) + \ord_{E_{i}}(\rho^{*}\lceil K_{Z} - \theta^{*}(K_{X} + \Delta_{X}) \rceil) \\
& \leq  \ord_{E_{i}}(h) + \ord_{E_{i}}(\Ram_{\rho}) + \ord_{E_{i}}(\lceil \rho^{*}(K_{Z} - \theta^{*}(K_{X} + \Delta_{X})) \rceil) \\
& =  \ord_{E_{i}}(h) + \ord_{E_{i}}(\lceil K_{W} - \eta^{*}(K_{Y} + \Delta_{Y}) \rceil ) \, .
\end{align*}
Thus, we conclude $h \in \J(B; \Delta_{Y})$ and hence $\J(A; \Delta_{X}) \cdot \pi_* \O_{Y} \subseteq \pi_* \J(B; \Delta_{Y})$.

Now assume in addition that the trace map is surjective, \ie $\Tr(B) = A$.  We have, using the surjectivity of trace at the third inequality, that
\[
\J(A; \Delta_{X}) \subseteq (\J(A; \Delta_{X}) \cdot \pi_* \O_Y) \cap \O_X  \subseteq \pi_* \J(B; \Delta_Y) \cap \O_X \subseteq \Tr\left(\pi_* \J(B; \Delta_{Y})\right) \subseteq \J(A; \Delta_{X}).
\]
This necessitates equality throughout, which completes the proof.
\end{proof}

We now complete the proof of our main theorem.

\begin{corollary}
\label{cor.MultIdealViaAlterations}
 Suppose that $(X, \Delta)$ is a pair in characteristic zero.  Then
\[
\J(X; \Delta) = \bigcap_{\pi : Y \to X} \Image \Big( \pi_* \O_Y(\lceil K_Y - \pi^*(K_X + \Delta) \rceil ) \to[\tr] K(X)\Big)
\]
where $\pi$ ranges over all alterations with $Y$ normal, $\tr \: K(Y) \to K(X)$ is the field trace, and we have that $K_{Y} = \pi^{*}K_{X} + \Ram_{\pi}$ wherever $\pi$ is finite.
\end{corollary}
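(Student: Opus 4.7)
The plan is to prove both containments of the displayed identity and to observe that the intersection already stabilizes at a single log resolution.

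For the containment $\bigcap \subseteq \J(X;\Delta)$: I would use Hironaka's resolution of singularities to produce a log resolution $\pi \: Y \to X$ of the pair $(X,\Delta)$, which is in particular a (birational) alteration with $Y$ regular. For such a birational $\pi$, the trace map is the reflexification inclusion of Example~\ref{ex:tracebirational}, so the image of $\pi_*\O_Y(\lceil K_Y - \pi^*(K_X+\Delta)\rceil)$ inside $K(X)$ is precisely $\J(X;\Delta)$ by the resolution-based interpretation of Definition~\ref{def:multiplierideals}. This exhibits a single member of the intersection equal to $\J(X;\Delta)$, simultaneously yielding the containment and the stabilization assertion.

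For the reverse containment $\J(X;\Delta) \subseteq \bigcap$: Fix an arbitrary alteration $\pi \: Y \to X$ with $Y$ normal, and apply Stein factorization to write $\pi = h \circ g$ with $g \: Y \to W$ proper birational and $h \: W \to X$ finite ($W$ is normal since $Y$ is). Set $\Delta_W \colonequals h^*(K_X+\Delta) - K_W$ so that $K_W+\Delta_W$ is $\bQ$-Cartier. In characteristic zero, $\tr_h(1) = \deg h$ is a unit in $\O_X$, so the trace $\tr_h\: h_*\O_W \to \O_X$ is surjective; this is precisely the hypothesis for the equality clause of Theorem~\ref{thm:BehaviorOfTheMultiplierIdealViaTrace}, which gives $\tr_h(\J(W;\Delta_W)) = \J(X;\Delta)$. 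Since $\J(W;\Delta_W)$ is by definition the intersection over all proper birational modifications with normal source---one of which is $g$---we have
\[
\J(W;\Delta_W) \subseteq g_*\O_Y\big(\lceil K_Y - g^*(K_W+\Delta_W) \rceil\big) = g_*\O_Y\big(\lceil K_Y - \pi^*(K_X+\Delta) \rceil\big).
\]
Combining these via functoriality of the trace $\tr_\pi = \tr_h \circ h_*\tr_g$ (Lemma~\ref{lem.CompatOfTraceBasic}(a)), with $\tr_g$ the reflexification inclusion for the birational $g$, yields
\[
\J(X;\Delta) \;=\; \tr_h\big(h_*\J(W;\Delta_W)\big) \;\subseteq\; \tr_\pi\big(\pi_*\O_Y(\lceil K_Y - \pi^*(K_X+\Delta) \rceil)\big),
\]
completing the argument.

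The conceptual heart of the matter has in fact already been absorbed into Theorem~\ref{thm:BehaviorOfTheMultiplierIdealViaTrace}: bridging the classical intersection form \eqref{eq:multfiniteint} of the finite-map transformation rule with the trace-image form \eqref{eq:multfinitetrace}. Given that input together with the standard birational invariance of multiplier ideals, the corollary reduces to organizing an arbitrary alteration through Stein factorization and chasing images, so the only additional external ingredient required is the existence of log resolutions in characteristic zero.
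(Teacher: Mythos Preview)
Your argument is correct and follows essentially the same strategy as the paper's proof: Stein-factorize an arbitrary alteration, apply Theorem~\ref{thm:BehaviorOfTheMultiplierIdealViaTrace} to the finite part (using that in characteristic zero $\Tr(1)=\deg h$ is a unit so $\Tr(B)=A$), and handle the birational part via the definition of the multiplier ideal. The only organizational difference is that the paper first dominates an arbitrary $Y$ by a smooth one with SNC support and then shows the image \emph{equals} $\J(X;\Delta)$ in one stroke, whereas you treat the two containments separately and use only the containment $\J(W;\Delta_W)\subseteq g_*\O_Y(\lceil K_Y-\pi^*(K_X+\Delta)\rceil)$ for the reverse inclusion; both routes are equivalent.
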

\begin{proof}
Because of the existence of resolution of singularities in characteristic zero, we may assume that each $Y$ is smooth and that $K_Y - \pi^*(K_X + \Delta)$ is supported on a simple normal crossings divisor on $Y$.
 First consider a finite map $f \: W \to X$ with $W$ normal (note that we are in characteristic zero, so the map is separable).  If we pick a representative $K_X$ such that $\O_X(K_X) = \omega_X$ and also pick $K_W = f^* K_X + \Ram_f$, then we recall from \autoref{ex.traceForFinite} that the field trace
$ \Tr \: K(W) \to K(X)$
induces a map $\Tr \: \O_W(K_W) \to \O_X(K_X)$ which is identified with the Grothendieck trace $\Tr_f \: f_* \omega_W \to \omega_X$.

Fix any proper dominant map $\pi \: Y \to X$.  Factor $\pi$ through a birational map $\rho \: Y \to W$ and a finite map $f \: W \to X$ with $W$ normal via Stein factorization.  Thus $\pi_* \omega_Y \to \omega_X$ factors through $\Tr \: f_* \omega_W \to \omega_X$.  It follows that
\[
 \pi_* \O_Y\left(K_Y - \pi^*(K_X + \Delta) \right) \to K(X)
\]
factors through $\Tr \: K(W) \to K(X)$.  Furthermore, $\rho_* \O_Y(K_Y - \pi^*(K_X + \Delta) )$ is by definition the multiplier ideal $\J(W; \Delta_W)$ where $\Delta_W = f^* \Delta - \Ram_f$, since $Y$ is smooth.  Thus, $\Image \Big( \pi_* \O_Y(K_Y - \pi^*(K_X + \Delta) ) \to K(X)\Big)$ is simply $\Tr(\rho_{*}\J(W; \Delta_{W}))$.  But that is equal to $\mJ(X; \Delta)$ by \autoref{thm:BehaviorOfTheMultiplierIdealViaTrace}.
\end{proof}

Finally, we turn our attention to the behavior of the multiplier ideal under proper dominant maps in characteristic zero.

\begin{theorem}
\label{thm.TransformationRuleForMultProperDominant}
 If $(X, \Delta)$ is a log-$\bQ$-Gorenstein pair in characteristic zero, then
\[
 \J(X; \Delta) = \bigcap_{\pi : Y \to X} \Image \Big( \myH^{\dim Y - \dim X} \myR \pi_* \O_Y(\lceil K_Y - \pi^*(K_X + \Delta) \rceil ) \to[\tr_{\pi}] K(X)\Big)
\]
where the intersection runs over all proper dominant maps from normal varieties $Y$.  Note the map to $K(X)$ is induced from the trace map as in \autoref{prop.TraceInGenerality}.
\end{theorem}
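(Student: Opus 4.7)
The plan is to establish both containments separately. The containment $\bigcap \subseteq \mJ(X;\Delta)$ is immediate: the intersection on the right is taken over a class that includes all alterations, so it is a subset of the intersection over alterations alone, which equals $\mJ(X;\Delta)$ by \autoref{cor.MultIdealViaAlterations}. It remains to show that for each proper dominant $\pi \: Y \to X$ with $Y$ normal, $\mJ(X;\Delta) \subseteq \Image(\tr_\pi)$ inside $K(X)$.

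First I would reduce to the case that $Y$ is smooth. Choose a log resolution $\mu \: \tilde Y \to Y$ so that $\mu^* \pi^*(K_X + \Delta)$ (together with the exceptional locus) has simple normal crossings support; this makes sense because $\pi^*(K_X + \Delta)$ is $\bQ$-Cartier even when $K_Y$ alone is not. Characteristic zero local vanishing gives $R^i \mu_* \omega_{\tilde Y}(-\lfloor \mu^*\pi^*(K_X + \Delta) \rfloor) = 0$ for $i > 0$, and identifies the zeroth direct image with the multiplier module $\mJ(\omega_Y; \pi^*(K_X + \Delta)) \subseteq \omega_Y(-\lfloor \pi^*(K_X + \Delta) \rfloor)$. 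The Leray spectral sequence for $\tilde\pi = \pi \circ \mu$ collapses, and combined with the functoriality of the trace under composition (\autoref{lem.CompatOfTraceBasic}(a)), the inclusion of the multiplier module into $\omega_Y(-\lfloor \pi^*(K_X+\Delta) \rfloor)$ yields $\Image(\tr_{\tilde\pi}) \subseteq \Image(\tr_\pi)$ in $K(X)$. Hence we may replace $\pi$ by $\tilde\pi$ and assume $Y$ is smooth.

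Next I would cut $Y$ down by general hyperplane sections to reduce to an alteration. Since the statement is local on $X$, assume $X = \Spec A$ is affine; Chow's lemma followed by a further resolution lets us also assume $Y$ is smooth and projective over $X$. Pick a very ample line bundle $\sA$ on $Y$. For $m \gg 0$, a Bertini-general $H \in |\sA^m|$ is smooth and meets $\Supp \lfloor \pi^*(K_X + \Delta) \rfloor$ transversally. Twisting the adjunction sequence by $\O_Y(-\lfloor \pi^*(K_X + \Delta)\rfloor)$ gives
\[
0 \to \omega_Y(-\lfloor \pi^*(K_X + \Delta)\rfloor) \to \omega_Y(H-\lfloor \pi^*(K_X + \Delta)\rfloor) \to \omega_H(-\lfloor \pi^*(K_X + \Delta)|_H\rfloor) \to 0.
\]
For $m$ sufficiently large, relative Serre vanishing kills $R^i \pi_*$ of the middle term for all $i > 0$, and the associated long exact sequence produces a surjective connecting homomorphism $\delta \: R^{n-1} \pi_{H*}\omega_H(-\lfloor \pi^*(K_X + \Delta)|_H\rfloor) \to R^n \pi_*\omega_Y(-\lfloor \pi^*(K_X + \Delta)\rfloor)$, where $n = \dim Y - \dim X$. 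The crucial technical point is the trace-compatibility $\tr_\pi \circ \delta = \pm\, \tr_{\pi|_H}$; this follows from identifying $\delta$ with $\pi_*$ of the residue map for the closed embedding $i \: H \hookrightarrow Y$ (the fundamental local isomorphism for a smooth Cartier divisor) together with the functoriality of trace under the composition $\pi \circ i = \pi|_H$. Consequently $\Image(\tr_{\pi|_H}) \subseteq \Image(\tr_\pi)$, and iterating the construction $n$ times yields a smooth complete intersection $Z \subseteq Y$ of dimension $\dim X$ with $Z \to X$ an alteration satisfying $\Image(\tr_{Z \to X}) \subseteq \Image(\tr_{Y \to X})$.

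Finally, \autoref{cor.MultIdealViaAlterations} together with \autoref{thm:BehaviorOfTheMultiplierIdealViaTrace} guarantee that in characteristic zero the trace image of any alteration contains $\mJ(X;\Delta)$; combined with the inclusion above this yields $\mJ(X;\Delta) \subseteq \Image(\tr_{Y \to X})$, completing the argument. I expect the main obstacle to be the explicit verification of $\tr_\pi \circ \delta = \pm\, \tr_{\pi|_H}$: while morally forced by Grothendieck--Serre duality, it requires carefully unwinding the identification of the adjunction coboundary with the residue map at the level of dualizing complexes.
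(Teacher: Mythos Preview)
Your argument is correct in outline and takes a genuinely different route from the paper's.  Both proofs begin with the easy containment $\bigcap \subseteq \mJ(X;\Delta)$ via \autoref{cor.MultIdealViaAlterations}, and both reduce the reverse containment to the case where $Y$ is smooth (using local vanishing to pass to a log resolution).  From that point the approaches diverge.

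The paper arranges that the smooth proper dominant map $\pi$ factors through a fixed regular alteration $f \: Z \to X$ with $f^*(K_X+\Delta)$ Cartier, say $\pi = f \circ \rho$.  It then invokes the derived splitting theorem of Kov\'acs and Bhatt: since $Z$ is regular, $\O_Z \to \myR\rho_*\O_Y$ admits a left inverse in $D^b_{\coherent}(Z)$.  Dualizing and twisting, the trace map $\myH^{\dim Y - \dim X}\myR\pi_*\omega_Y(-\pi^*(K_X+\Delta)) \to f_*\omega_Z(-f^*(K_X+\Delta))$ is split surjective, whence $\Image(\tr_\pi) \supseteq \Image(\tr_f) = \mJ(X;\Delta)$.

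Your argument instead reduces the relative dimension by cutting with general relatively ample hyperplanes, using relative Serre vanishing to force the connecting homomorphism $\delta$ to be surjective and the identity $\tr_\pi \circ \delta = \pm\,\tr_{\pi|_H}$ to descend the trace image.  This is more elementary in that it avoids the Kov\'acs--Bhatt black box, relying only on Bertini and Serre vanishing.  The price is exactly the point you flag: one must identify the coboundary of the adjunction sequence with the Grothendieck trace of the closed immersion $i \: H \hookrightarrow Y$ (the fundamental local isomorphism) and then use functoriality of trace under $\pi \circ i = \pi|_H$.  This is true but needs unwinding at the level of dualizing complexes; the paper's approach trades this local computation for a global splitting statement, giving a shorter if less self-contained proof.
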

\begin{proof}
We may restrict our maps $\pi : Y \to X$ to those maps where $Y$ is regular and which factor through a fixed regular alteration $f : Z \to X$ such that $f^* (K_X + \Delta)$ is an integral Cartier divisor.  Thus by \autoref{cor.MultIdealViaAlterations}, it is sufficient to show that
\[
 \myH^{\dim Y - \dim X} \myR (f \circ \rho)_* \O_Y(K_Y - \pi^*(K_X + \Delta) ) \to  \myH^0 \myR f_* \omega_Z(-f^* (K_X + \Delta))
\]
is surjective on global sections.  However, by the statement of \cite[Theorem 2]{KovacsRat}, first correctly proved in full generality in \cite[Theorem 4.1.3]{BhattThesis}, the natural map $\O_Z \to \myR \rho_* \O_Y$ has a left inverse in the derived category, and thus so does
\[
\myR f_* \myR \rho_* \left( \omega_Y[\dim Y] \tensor \O_Y (\pi^* (K_X + \Delta)) \right) \to \myR f_* \left( \omega_Z[\dim Z] \tensor \O_Z(f^* (K_X + \Delta)) \right).
\]
In particular, taking $-d$th cohomology yields the desired surjection on global sections.
\end{proof}

\section{Further questions}

This theory suggests a large number of potential directions for further inquiry.  We highlight a few of the more obvious ones below.

\begin{question} [Mixed characteristic]
What can be said in mixed characteristic?  In particular, does the intersection from our Main Theorem stabilize for schemes in mixed characteristic?
\end{question}

We have learned that M. Hochster and W. Zhang have made progress on this question in low dimensions for isolated singularities.

\begin{question}[Adjoint ideals]
Can one develop a characteristic theory analogous to the theory of adjoint ideals, \cf \cite[9.3.E]{LazarsfeldPositivity2} or \cite{TakagiPLTAdjoint}, described via alterations or finite covers?
\end{question}

The characterization of test ideals, as well as $F$-regular and $F$-rational singularities suggests the following:

\begin{question}[$F$-pure singularities]
Can $F$-pure (or $F$-injective) singularities likewise be described by alterations?
\end{question}

Finally, we consider the following question:

\begin{question}[Effectivity of covers and alterations]
Given a pair $(X, \Delta)$, how can we identify finite covers (or alterations) $\pi : Y \to X$ such that
\[
\tau(X; \Delta) = \Image \big( \lceil \pi_* \O_Y(K_Y - \pi^*(K_X + \Delta) \rceil ) \to[\tr_{\pi}] K(X)\big)?
\]
In other words, can we determine when the intersection from our Main Theorem stabilizes?
\end{question}

We do not have a good answer to this question.  The key point in our construction is repeated use of the Equational Lemma \cite{HochsterHunekeInfiniteIntegralExtensionsAndBigCM, HunekeLyubeznikAbsoluteIntegralClosure,sannai_galois_2011}.  The procedure in that Lemma is constructive.  However, this is not very satisfying.  It would be very satisfying and likely useful if one had a different geometric or homological criterion for identifying $\pi : Y \to X$ as in the question above.


\bibliographystyle{skalpha}

\bibliography{CommonBib}
\end{document}